\documentclass[12pt,reqno]{amsart}
\usepackage{amssymb}
\usepackage{amsmath}
\usepackage{bm}
\usepackage{xcolor,datetime}
\usepackage{times}
\usepackage[
bookmarks=true,unicode,bookmarks=true,colorlinks,
citecolor=blue,linkcolor=blue]{hyperref}
\usepackage{fouriernc}
\usepackage[top=1.2in,bottom=1.2in,left=1.2in,right=1.2in]{geometry}
\usepackage{graphicx}
\usepackage{subfigure}
\usepackage{multirow}
\usepackage{amsthm}
\usepackage{enumitem}
\vfuzz2pt 
\hfuzz2pt 
\numberwithin{equation}{section}

\newtheorem{theorem}{Theorem}[section]
\newtheorem{lem}{Lemma}[section]
\newtheorem{pro}{Proposition}[section]

\newtheorem{rmk}{Remark}[section]

\newtheorem{assumption}{Assumption}
\numberwithin{equation}{section}

\renewcommand{\P}{\mathbb{P}}
\newcommand{\E}{\mathbb{E}}
\newcommand{\R}{\mathbb{R}}



\allowdisplaybreaks

\begin{document}

\title{Central limit theorem and Cram\'{e}r-type moderate deviations for Milstein scheme}

\author[P.~Chen]{Peng Chen}
\address[P.~Chen]{School of Mathematics, Nanjing University of Aeronautics and
Astronautics, Nanjing 211106, China}
\email{chenpengmath@nuaa.edu.cn}

\author[H.~Jiang]{Hui Jiang}
\address[H.~Jiang]{School of Mathematics, Nanjing University of Aeronautics and
Astronautics, Nanjing 211106, China}
\email{huijiang@nuaa.edu.cn}

\author[J.~Wang]{Jing Wang}
\address[J.~Wang]{School of Mathematics, Nanjing University of Aeronautics and
Astronautics, Nanjing 211106, China}
\email{wangjingmath@nuaa.edu.cn}

\maketitle

\begin{abstract}
In this paper, we investigate the Milstein numerical scheme with step size $\eta$ for a stochastic differential equation driven by multiplicative Brownian motion. Under some appropriate coefficient conditions, the continuous-time system and its discrete Milstein scheme approximation each possess unique invariant measures, which we denote by $\pi$ and $\pi_\eta$ respectively. We first establish a central limit theorem for the empirical measure $\Pi_{\eta}$, a statistical consistent estimator of $\pi_{\eta}$. Subsequently, we derive both normalized and self-normalized Cram\'{e}r-type moderate deviations.
\end{abstract}

\maketitle

\noindent {\bf Key words:} Central limit theorem; Cram\'{e}r-type moderate deviation; Milstein scheme; Self-normalized sequences.\\

\noindent{\bf 2000MSC:} primary 60F10; 60E05; secondary 60H10; 62F12.

\section{Introduction}

We consider the following stochastic differential equation (SDE) on $\mathbb{R}^d$:
\begin{equation}\label{SDE}
\mathrm{d}X_t=b(X_t)\mathrm{d}t+\sigma(X_t)\mathrm{d}B_t,\quad X_0=x_0,
\end{equation}
where $b:\mathbb{R}^d\to\mathbb{R}^d$ and $\sigma:\mathbb{R}^d\to\mathbb{R}^{d\times d}$, and $(B_t)_{t\geq0}$ is a $d$-dimensional standard Brownian motion.

Under appropriate coefficient conditions, the existence and uniqueness of solution to the SDE (\ref{SDE}) have been thoroughly established in \cite{CZZ,Weinan,Evan,RWZ} and the references therein. Now, recent researches have increasingly focused on studying the underlying invariant measure, and investigating the corresponding numerical invariant measures derived from various discretization schemes with their convergence rate. The popular numerical methods include the Euler-Maruyama scheme, Milstein scheme and other high-order discretization schemes, see more details in \cite{Abdulle,Weinan,Szpruch,Talay}.

Given a step size $\eta\in(0,1)$, the Euler-Maruyama scheme of (\ref{SDE}) reads as
\begin{align}\label{EM}
    \theta_{k+1}=\theta_k+\eta b(\theta_k)+\sqrt{\eta}\sigma(\theta_k)\xi_{k+1},\quad k\geq 0,
\end{align}
where $(\xi_k)_{k\geq 1}$ are i.i.d. $d$-dimensional standard normal random vectors. The convergence properties of the Euler-Maruyama scheme have been extensively studied across various stochastic systems, including SDEs driven by Brownian motion, Markov switching and $\alpha$-stable L\'evy processes in \cite{Bao,Chen-2023,Chen-2024,Lu,Mao,Yuan}. For the backward Euler-Maruyama method, \cite{Liu,Liu-2023,Li} investigated invariant measures for SDEs with Markov switching, with nonlinear and super-linear coefficients respectively. Meanwhile, \cite{Jiang,Jiang-2020} employed the stochastic $\theta$ method to study SDEs with Markov switching and nonlinear structures. In \cite{Chen-2024}, variable step-size Euler-Maruyama scheme was applied to approximate the invariant measure of regime-switching jump-diffusion processes.

For the convergence rate and Cram\'{e}r-type moderate deviations of Euler-Maruyama scheme, in the case of additive noise (i.e. $\sigma(x)\equiv\sigma$), \cite{Fang} has proved that the Wasserstein-1 distance between $\pi$ and $\pi_{\eta}^{EM}$ (the unique invariant measure of the Euler-Maruyama scheme) is in order of $\eta^{1/2}$, up to a logarithmic correction. Moreover, \cite{Lu} has obtained the central limit theorem and normalized Cram\'er-type moderate deviation for Euler-Maruyama scheme (\ref{EM}), while \cite{Fan} extends the range of Cram\'er-type moderate deviations by the martingale methods. For more results on Cram\'er-type moderate deviation for dependent time series, we refer to \cite{Chen,Fan-2020,FSZ21,GJZ23,JPW24,SZZ21} and the references therein.

In this paper, we focus on the Milstein scheme for SDE (\ref{SDE}). Given a step size $\eta\in(0,1)$, the Milstein scheme can be given as
\begin{equation}\label{Milstein}
\theta_{k+1}=\theta_k+\eta b(\theta_k)+\sqrt{\eta}\sigma(\theta_k)\xi_{k+1}+\frac{1}{2}\eta\mathcal{R}(\theta_k,\xi_{k+1}),\quad k\geq 0,
\end{equation}
where
\begin{align*}
\mathcal{R}(\theta_k,\xi_{k+1}):=\left(\nabla_{\sigma(\theta_k)\xi_{k+1}}\sigma(\theta_k)\right)\xi_{k+1}-\mathbb{E}\left(\left(\nabla_{\sigma(\theta_k)\xi_{k+1}}\sigma(\theta_k)\right)\xi_{k+1}\right),
\end{align*}
and $\nabla_v f(x)$ denotes the directional derivative of $f \in \mathcal{C}^2(\mathbb{R}^d,\mathbb{R}^{d\times d})$ along $v\in \mathbb{R}^d$, defined in Section \ref{preli}. Here $\mathcal{C}^k(\R^d,\R^{d\times d})$ with $k\geq 1$ denotes the collection of all $k$-th order continuous differentiable functions.

Compared to the Euler-Maruyama scheme, the invariant measure 
$\pi_{\eta}$ of the Milstein scheme \eqref{Milstein} has received less attention. \cite{WL19} showed the invariant measure of the Milstein scheme for SDE with commutative noise, while \cite{Gao} proved that the Milstein scheme admits a unique invariant measure for stochastic differential delay equation with exponential convergence to the underlying one in the Wasserstein metric. Here, compared with the Euler-Maruyama scheme, the study of Cram\'er-type moderate deviations for the Milstein scheme is notably fewer.

In this paper, we first construct an empirical measure $\Pi_{\eta}$ of the Milstein scheme as a statistic of $\pi_{\eta}$. And then, we apply Stein's method established in \cite{Fang} to study the corresponding  Cram\'er-type moderate deviations for $\eta^{-1/2}(\Pi_{\eta}(\cdot)-\pi(\cdot))$. The motivation of this paper has two folds:
\begin{enumerate}
    \item establish the central limit theorem of $\Pi_{\eta}$;
    \item derive both normalized and self-normalized Cram\'er-type moderate deviations of $\eta^{-1/2}(\Pi_{\eta}(\cdot)-\pi(\cdot))$.
\end{enumerate}

The remainder of this paper is organized as follows. Some assumptions and main theorems are stated in Section 2. In Section 3, we introduce key notations, outline the proof strategies for the main results, and provide essential propositions, while the technical proofs are deferred to Appendix \ref{Appendix} and \ref{AppendixB}. The detailed proofs of our main theorems are systematically developed in Sections \ref{ProofCLT}--\ref{ProofS}, respectively.

\section{Assumptions and main theorems}

\subsection{Assumptions and main framework}

We first state the main assumptions.
\begin{assumption}\label{conditions}
Suppose $\sigma:\mathbb{R}^d\to\mathbb{R}^{d\times d}$ and $b:\mathbb{R}^d\to\mathbb{R}^d$ are second order differentiable. There exist $L$, $K_1>0$ and $K_2\geq0$ such that for every $x,y\in\R^d$,
\begin{align}\label{Lip}
\Arrowvert b(x)-b(y)\Arrowvert_2 \vee \Arrowvert\sigma(x)-\sigma(y)\Arrowvert \leq L\Arrowvert x-y\Arrowvert_2,
\end{align}
\begin{align}\label{Haosan}
\big<b(x)-b(y),x-y\big>\leq-K_1\Arrowvert x-y\Arrowvert_2^2+K_2.
\end{align}
Moreover, $\sigma$ is bounded and positive definite, and $\nabla\sigma\not\equiv 0$.
\end{assumption}
Here, $\left<\cdot,\cdot\right>$ and $\Arrowvert x\Arrowvert_2$ denote the inner product on $\R^d$ and the Euclidean norm of a vector $x\in\R^d$, respectively. For $A\in\R^{d\times
 d}$, we denote the operator norm $\Arrowvert A\Arrowvert$ by
$$
\Arrowvert A\Arrowvert=\sup_{v\in\R^d,\Arrowvert v\Arrowvert_2=1}\Arrowvert Av\Arrowvert_2.
$$

\begin{assumption}\label{conditions2}
    For the random variable $\theta_0$ (the initial value of the Milstein scheme \eqref{Milstein}) and for $\gamma>0$ depending on $K_1$ and $L$, there exists a positive constant $C$ such that
    \begin{align*}
        \E\exp\left\{\gamma\Arrowvert\theta_0\Arrowvert_2^2\right\}\leq C.
    \end{align*}
\end{assumption}

\begin{rmk}
It is easy to see that the condition (\ref{Lip}) implies
\begin{align}\label{|g(x)|^2}
\Arrowvert b(x)\Arrowvert_2^2\leq 2L^2\Arrowvert x\Arrowvert_2^2+2\Arrowvert b(0)\Arrowvert_2^2,\quad \Arrowvert\nabla b\Arrowvert\leq L.
\end{align}
Then, the condition (\ref{Haosan}) with Young's inequality imply
\begin{align}\label{|x|^2}
\left<x,b(x)\right>
&=\left<x-0,b(x)-b(0)\right>+\left<x,b(0)\right>\nonumber\\
&\leq-K_1\Arrowvert x\Arrowvert_2^2+K_2+\frac{K_1}{2}\Arrowvert x\Arrowvert_2^2+\frac{1}{2K_1}\Arrowvert b(0)\Arrowvert_2^2=-\frac{K_1}{2}\Arrowvert x\Arrowvert_2^2+\left(K_2+\frac{1}{2K_1}\Arrowvert b(0)\Arrowvert_2^2\right).
\end{align}
\end{rmk}

For a small $\eta\in(0,1)$, define the empirical invariant measure of $\pi_{\eta}$ as
\begin{align*}
\Pi_\eta(\cdot)=\frac{1}{[\eta^{-2}]}\sum_{k=0}^{[\eta^{-2}]-1}\delta_{\theta_k}(\cdot),
\end{align*}
where $\delta_y(\cdot)$ is the Dirac measure of $y$ and $[\eta^{-2}]$ denote the integer part of $\eta^{-2}$. Then $\Pi_\eta$ is a consistent statistic of $\pi$ as $\eta\to0$.

For a matrix $A\in \mathbb{R}^{d \times d}$, we write $A^{\rm{T}}$ for the transpose of $A$. Let $\pi(h)=\int_{\R^d}h(x)\pi(\mathrm{d}x)$ and define
\begin{equation}\label{Y}
W_\eta=\frac{\eta^{-1/2}(\Pi_\eta(h)-\pi(h))}{\sqrt{\mathcal{Y}_\eta}} \quad \text{and} \quad S_\eta=\frac{\eta^{-1/2}(\Pi_\eta(h)-\pi(h))}{\sqrt{\mathcal{V}_\eta}}, 
\end{equation}
where
\begin{gather}\label{V}
\mathcal{Y}_\eta=\frac{1}{[\eta^{-2}]}\sum_{k=0}^{[\eta^{-2}]-1}\big\Arrowvert \sigma(\theta_k)^{\rm T}\nabla f_{h}(\theta_k)\big\Arrowvert_2^2;\nonumber\\
\mathcal{V}_\eta=\frac{1}{\eta[\eta^{-2}]}\sum_{k=0}^{[\eta^{-2}]-1}  \big\Arrowvert\big(\theta_{k+1}-\theta_k-\eta b(\theta_k)-\frac{1}{2}\eta\mathcal{R}(\theta_k,\xi_{k+1})\big)^{\rm T}\nabla f_{h}(\theta_k)\big\Arrowvert_2^2.
\end{gather}
Here, $f_h$ is the solution to the following Stein equation (\cite{Lu}):
\begin{align}\label{Stein}
h-\pi(h)=\mathcal{A}f,\quad h\in \mathcal{C}_b^2(\R^d,\R),
\end{align}
where $\mathcal{C}_b^k(\R^d,\R)$ with $k\geq 1$ denotes the collection of all bounded $k$-th order continuously differentiable functions, and $\mathcal{A}$ is the generator of SDE (\ref{SDE}) defined as
\begin{align}\label{Af(x)}
\mathcal{A}f(x)=\big<b(x),\nabla f(x)\big>+\frac{1}{2}\big<\sigma(x)\sigma(x)^{\rm T},\nabla^2f(x)\big>_{\rm{HS}},\qquad f\in \mathcal{C}_b^2(\R^d,\R).
\end{align}
Here, for the matrices $A, B \in \mathbb{R}^{d \times d}$, define $\langle A, B\rangle_{{\rm HS}}:=\sum_{i,j=1}^d A_{ij} B_{ij}$. Notably, Lemma 3.1 in \cite{Lu} provides the key regularity estimates for $f_{h}$, i.e.
\begin{align}\label{boundedness}
    \Arrowvert\nabla^kf_{h}\Arrowvert\leq C,\quad k=0,1,2,3,4,
\end{align}
where the positive constant $C$ depends on $b$ and $\sigma$.

\subsection{Main results}

First, we give the central limit theorem of $\Pi_{\eta}(h)$.
\begin{theorem}\label{CLT}
Let {\bf Assumption \ref{conditions}} hold and $h\in \mathcal{C}_b^2(\R^d,\R)$. Then we have
\begin{align*}
\frac{1}{\sqrt{\eta}}\left(\Pi_\eta(h)-\pi(h)\right)\xrightarrow{\mathcal{L}} N\left(0,\pi\left(\left\Arrowvert\sigma^{{\rm T}}\nabla f_{h}\right\Arrowvert_2^{2}\right)\right),
\end{align*}
where $\xrightarrow{\mathcal{L}}$ denotes the convergence in distribution.
\end{theorem}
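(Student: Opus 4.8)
The plan is to turn $\eta^{-1/2}(\Pi_\eta(h)-\pi(h))$ into a single triangular array of martingale differences via the Stein equation and a Taylor expansion, and then to invoke a martingale central limit theorem. Write $N:=[\eta^{-2}]$ and let $(\mathcal{F}_k)$ be the natural filtration generated by $\theta_0,\xi_1,\xi_2,\dots$. By the Stein equation \eqref{Stein}, $h(\theta_k)-\pi(h)=\mathcal{A}f_h(\theta_k)$, so setting $D_k:=f_h(\theta_{k+1})-f_h(\theta_k)-\eta\mathcal{A}f_h(\theta_k)$ and telescoping gives
\begin{align*}
\eta^{-1/2}\big(\Pi_\eta(h)-\pi(h)\big)=\frac{f_h(\theta_N)-f_h(\theta_0)}{\eta^{3/2}N}-\frac{1}{\eta^{3/2}N}\sum_{k=0}^{N-1}D_k .
\end{align*}
Since $f_h$ is bounded by \eqref{boundedness} and $\eta^{3/2}N\sim\eta^{-1/2}$, the first term is $O(\eta^{1/2})\to0$, and everything reduces to analysing $\eta^{-3/2}N^{-1}\sum_k D_k$.

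For the second step I Taylor-expand $f_h(\theta_{k+1})$ around $\theta_k$ to fourth order along the increment $\Delta_k:=\theta_{k+1}-\theta_k=\eta b(\theta_k)+\sqrt\eta\,\sigma(\theta_k)\xi_{k+1}+\tfrac12\eta\mathcal{R}(\theta_k,\xi_{k+1})$, and split $D_k=\mathbb{E}[D_k\mid\mathcal{F}_k]+\widetilde M_{k+1}$ with $\widetilde M_{k+1}:=f_h(\theta_{k+1})-\mathbb{E}[f_h(\theta_{k+1})\mid\mathcal{F}_k]$ a martingale difference. Using $\mathbb{E}[\xi_{k+1}\mid\mathcal{F}_k]=0$, the identity $\mathbb{E}[\mathcal{R}(\theta_k,\xi_{k+1})\mid\mathcal{F}_k]=0$ (built into the definition of $\mathcal{R}$), the vanishing of odd Gaussian moments, the boundedness of $\nabla^j f_h$ for $j\le4$, the boundedness of $\sigma$ and $\nabla\sigma$, the linear growth of $b$ from \eqref{|g(x)|^2}, and the uniform moment bounds $\sup_k\mathbb{E}\|\theta_k\|_2^{p}<\infty$ (a consequence of the dissipativity \eqref{|x|^2}, supplied by the preliminary propositions of Section \ref{preli}), one checks $\mathbb{E}[\Delta_k\mid\mathcal{F}_k]=\eta b(\theta_k)$, $\mathbb{E}[\Delta_k\Delta_k^{\rm T}\mid\mathcal{F}_k]=\eta\,\sigma(\theta_k)\sigma(\theta_k)^{\rm T}+O(\eta^2)$, $\mathbb{E}[\Delta_k^{\otimes3}\mid\mathcal{F}_k]=O(\eta^2)$ and $\mathbb{E}[\|\Delta_k\|_2^4\mid\mathcal{F}_k]=O(\eta^2)$, all up to a factor $(1+\|\theta_k\|_2^p)$, whence $|\mathbb{E}[D_k\mid\mathcal{F}_k]|\le C\eta^2(1+\|\theta_k\|_2^{p})$ and therefore $\mathbb{E}\big|\eta^{-3/2}N^{-1}\sum_k\mathbb{E}[D_k\mid\mathcal{F}_k]\big|\le C\eta^{1/2}\to0$.

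The same expansion gives $\widetilde M_{k+1}=\sqrt\eta\,\langle\sigma(\theta_k)^{\rm T}\nabla f_h(\theta_k),\xi_{k+1}\rangle+\rho_{k+1}$, where $\rho_{k+1}$ collects the centered quadratic term $\tfrac12\langle\nabla^2 f_h(\theta_k),\Delta_k\Delta_k^{\rm T}-\mathbb{E}[\Delta_k\Delta_k^{\rm T}\mid\mathcal{F}_k]\rangle_{\rm HS}$, the term $\tfrac12\eta\langle\nabla f_h(\theta_k),\mathcal{R}(\theta_k,\xi_{k+1})\rangle$, and the remaining higher-order pieces; each is a martingale difference with $\mathbb{E}[\rho_{k+1}^2\mid\mathcal{F}_k]\le C\eta^2(1+\|\theta_k\|_2^{p})$, so by orthogonality of martingale increments $\mathbb{E}\big(\eta^{-3/2}N^{-1}\sum_k\rho_{k+1}\big)^2\le C(\eta N)^{-1}\to0$. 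Collecting the estimates,
\begin{align*}
\eta^{-1/2}\big(\Pi_\eta(h)-\pi(h)\big)=-\frac1{\eta N}\sum_{k=0}^{N-1}\big\langle\sigma(\theta_k)^{\rm T}\nabla f_h(\theta_k),\xi_{k+1}\big\rangle+o_{\mathbb P}(1).
\end{align*}
The summands $a_{N,k}:=-(\eta N)^{-1}\langle\sigma(\theta_k)^{\rm T}\nabla f_h(\theta_k),\xi_{k+1}\rangle$ are martingale differences, and since $\xi_{k+1}$ is standard Gaussian and independent of $\mathcal{F}_k$ we have $\sum_{k=0}^{N-1}\mathbb{E}[a_{N,k}^2\mid\mathcal{F}_k]=(\eta^2N)^{-1}\mathcal{Y}_\eta$; as $\eta^2N\to1$ while $\mathcal{Y}_\eta\to\pi(\|\sigma^{\rm T}\nabla f_h\|_2^2)$ in probability (ergodic averaging along the Milstein chain combined with $\pi_\eta\to\pi$), this converges in probability to $\pi(\|\sigma^{\rm T}\nabla f_h\|_2^2)$, and $\sum_k\mathbb{E}[a_{N,k}^4\mid\mathcal{F}_k]\le C(\eta^4N^3)^{-1}\to0$ by boundedness of $\|\sigma^{\rm T}\nabla f_h\|_2$, which yields the conditional Lindeberg condition. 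A standard martingale central limit theorem then gives $\sum_k a_{N,k}\xrightarrow{\mathcal{L}}N\big(0,\pi(\|\sigma^{\rm T}\nabla f_h\|_2^2)\big)$, and Slutsky's theorem completes the proof.

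The main obstacle is the second step: one must verify that, after conditioning on $\mathcal{F}_k$, all contributions of order $\eta^{3/2}$ in the Taylor expansion of $D_k$ genuinely cancel — this hinges on the centering built into $\mathcal{R}$ and on the parity of Gaussian moments — so that the predictable part is truly $O(\eta^2)$ and $\eta^{-1/2}$ is the correct scaling; controlling the attendant $\theta_k$-dependent prefactors requires the uniform-in-$k$ moment estimates coming from the dissipativity condition. The only other nontrivial input is the convergence $\mathcal{Y}_\eta\to\pi(\|\sigma^{\rm T}\nabla f_h\|_2^2)$, which needs a quantitative ergodic estimate uniform in small $\eta$ together with the Wasserstein convergence $\pi_\eta\to\pi$; both are provided by the preliminary propositions.
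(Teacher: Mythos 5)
Your proposal is correct and follows essentially the same route as the paper: the Stein-equation/Taylor decomposition into the martingale $\mathcal{H}_\eta=-\eta\sum_k\langle\nabla f_h(\theta_k),\sigma(\theta_k)\xi_{k+1}\rangle$ plus negligible remainders, followed by a martingale CLT with the conditional variance identified through $\mathcal{Y}_\eta$. The one inaccuracy is attributing the convergence $\pi_\eta(\Arrowvert\sigma^{\rm T}\nabla f_h\Arrowvert_2^2)\to\pi(\Arrowvert\sigma^{\rm T}\nabla f_h\Arrowvert_2^2)$ to the preliminary propositions; this is not among them and is the one genuinely new ingredient the paper supplies (Lemma \ref{convergence}), proved by a second Stein equation for the test function $\Arrowvert\sigma^{\rm T}\nabla f_h\Arrowvert_2^2$ and a one-step Taylor expansion under stationarity of the Milstein chain — an argument of the same flavor as your main decomposition, so the omission is easily repaired.
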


Second, we state the normalized Cram\'{e}r-type moderate deviations.
\begin{theorem}\label{W}
Let {\bf Assumptions} {\bf\ref{conditions}} and {\bf\ref{conditions2}} hold, and $h\in \mathcal{C}_b^2(\R^d,\R)$. Then we have for all $c\Arrowvert\nabla\sigma\Arrowvert_{\infty}^{3/4}\eta^{1/8}\leq x=o(\Arrowvert\nabla\sigma\Arrowvert_{\infty}^{-3/4}\eta^{-1/8})$,
$$
\left\lvert\ln\frac{\P\big(W_\eta>x\big)}{1-\Phi(x)}\right\rvert\leq c\left(x^3\eta^{1/2}+x\Arrowvert\nabla\sigma\Arrowvert_{\infty}^{3/4}\eta^{1/8}+\eta^{1/2}\big|\ln\eta\big|\right).
$$
In particular, it implies that
$$
\sup_{0\leq x=o(\Arrowvert\nabla\sigma\Arrowvert_{\infty}^{-3/4}\eta^{-1/8})}\left\lvert\frac{\P\big(W_\eta>x\big)}{1-\Phi(x)}-1\right\rvert\to 0,\quad as \quad\eta\to0.
$$

Moreover, the same results also hold when $W_\eta$ is replaced by $-W_\eta$.
\end{theorem}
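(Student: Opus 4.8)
The plan is to reduce $W_\eta$ to a normalized martingale and then invoke a Cram\'er-type moderate deviation bound for martingales, in the spirit of \cite{Fan,Fang}. Write $N=[\eta^{-2}]$ and $\mathcal{F}_k=\sigma(\theta_0,\xi_1,\dots,\xi_k)$. Since $h-\pi(h)=\mathcal{A}f_h$, a Taylor expansion of $f_h$ along one Milstein step---using $\|\nabla^jf_h\|\le C$ for $j\le4$ from \eqref{boundedness} together with uniform-in-$k$ exponential moment bounds for $\theta_k$ (which follow from Assumption~\ref{conditions2}, the dissipativity \eqref{Haosan} and the Lipschitz bound \eqref{Lip}, and which are among the propositions deferred to the appendices)---gives $\E[f_h(\theta_{k+1})-f_h(\theta_k)\mid\mathcal{F}_k]=\eta(h(\theta_k)-\pi(h))+\eta r_k$ with $|r_k|\le C\eta(1+\|\theta_k\|_2^2)$. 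Telescoping over $k=0,\dots,N-1$ then yields
\begin{align*}
\eta^{-1/2}\big(\Pi_\eta(h)-\pi(h)\big)=-\frac{M_N}{\eta^{3/2}N}+\mathcal{E}_\eta,\qquad M_N:=\sum_{k=0}^{N-1}\big(f_h(\theta_{k+1})-\E[f_h(\theta_{k+1})\mid\mathcal{F}_k]\big),
\end{align*}
where $M_N$ is a martingale and the remainder $\mathcal{E}_\eta$, built from the boundary term $(f_h(\theta_N)-f_h(\theta_0))/(\eta^{3/2}N)$ and from $\eta\sum_kr_k/(\eta^{3/2}N)$, is $O(\eta^{1/2})$ up to logarithmic corrections on a high-probability event---this will be the source of the $\eta^{1/2}|\ln\eta|$ term.

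Next I would isolate the dominant part $\Delta_k^{\circ}:=\sqrt\eta\,\langle\sigma(\theta_k)^{\rm T}\nabla f_h(\theta_k),\xi_{k+1}\rangle$ of the martingale difference, the remainder being of order $\eta$ and built from the quadratic Taylor term and the Milstein correction $\tfrac12\eta\mathcal{R}(\theta_k,\xi_{k+1})$---a centered quadratic form in $\xi_{k+1}$ whose size is governed by $\|\nabla\sigma\|_\infty$. Since odd Gaussian moments vanish, $\E[\Delta_k^2\mid\mathcal{F}_k]=\eta\|\sigma(\theta_k)^{\rm T}\nabla f_h(\theta_k)\|_2^2+O(\eta^2)(1+\|\theta_k\|_2^2)$, and one checks that $\eta N\mathcal{Y}_\eta=\sum_k\E[(\Delta_k^{\circ})^2\mid\mathcal{F}_k]$ and $\eta N\mathcal{V}_\eta=\sum_k(\Delta_k^{\circ})^2$ are exactly the predictable and the optional quadratic variations of $M^{\circ}_N:=\sum_k\Delta_k^{\circ}$ (indeed $\theta_{k+1}-\theta_k-\eta b(\theta_k)-\tfrac12\eta\mathcal{R}(\theta_k,\xi_{k+1})=\sqrt\eta\,\sigma(\theta_k)\xi_{k+1}$). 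Combined with the first paragraph and $\eta^2N\to1$, this identifies $W_\eta$ with the normalized martingale $-M^{\circ}_N/\sqrt{\langle M^{\circ}\rangle_N}$ up to perturbations of the orders appearing in the conclusion. I would then apply a Cram\'er-type moderate deviation for (self-)normalized martingales to $M^{\circ}_N$, via a conjugate change of measure and a cumulant/Taylor expansion; the inputs are (i) the uniform exponential moments of $\theta_k$, giving sub-Gaussian conditional tails for $\Delta_k^{\circ}$ and sub-exponential tails for the $\mathcal{R}$-contribution, (ii) a stability estimate for $\langle M^{\circ}\rangle_N$ around $\eta N\,\pi(\|\sigma^{\rm T}\nabla f_h\|_2^2)$, from an ergodic-type concentration inequality for the geometrically ergodic chain $(\theta_k)$, and (iii) the negligible perturbations above. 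Collecting errors produces the first displayed bound: the cubic term $x^3\eta^{1/2}$ from the Lyapunov ratio $\max_k\|\Delta_k^{\circ}\|/\sqrt{\langle M^{\circ}\rangle_N}$ after a truncation and the block reduction required in (ii); the term $x\|\nabla\sigma\|_\infty^{3/4}\eta^{1/8}$ from the separate truncation-and-concentration treatment of the quadratic correction $\mathcal{R}$, the fractional exponents coming from optimizing the truncation level against its sub-exponential tail; and $\eta^{1/2}|\ln\eta|$ from $\mathcal{E}_\eta$. Transferring the bound from $-M^{\circ}_N/\sqrt{\langle M^{\circ}\rangle_N}$ back to $W_\eta$ only perturbs the three terms by the same orders.

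For the ``in particular'' statement, when $0\le x=o(\|\nabla\sigma\|_\infty^{-3/4}\eta^{-1/8})$ each of the three error terms tends to $0$, and the elementary inequality $|e^u-1|\le|u|e^{|u|}$ upgrades the logarithmic bound to $\sup_x|\P(W_\eta>x)/(1-\Phi(x))-1|\to0$; the range $0\le x<c\|\nabla\sigma\|_\infty^{3/4}\eta^{1/8}$, where the ratio is already close to $1$, is covered by the Berry--Esseen-type estimate implicit in that threshold. The statement for $-W_\eta$ follows by replacing $h$ with $-h$: since $\mathcal{A}$ is linear we have $f_{-h}=-f_h$, hence $M^{\circ}_N\mapsto-M^{\circ}_N$ and the same bound applies (equivalently, the martingale moderate-deviation bound is insensitive to this sign change).

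I expect the main obstacle to be the variance-stability estimate (ii) together with the precise error accounting for the non-Gaussian Milstein correction $\mathcal{R}$: one must establish conditional-moment and concentration bounds for $(\theta_k)$ that are uniform in $k$ and $\eta$, propagate the exponential moment of Assumption~\ref{conditions2} through the discretization, and carefully track how $\|\nabla\sigma\|_\infty$ and the fourth-order Gaussian moments---which enter because $\mathcal{R}$ is quadratic in $\xi_{k+1}$---pass through the change of measure, so as to land exactly on the exponents $3/4$ and $1/8$ rather than on cruder ones.
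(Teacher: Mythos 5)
Your proposal follows essentially the same route as the paper: decompose $\eta^{-1/2}(\Pi_\eta(h)-\pi(h))$ via Stein's equation and Taylor expansion into the dominant martingale $-\eta\sum_k\langle\nabla f_h(\theta_k),\sigma(\theta_k)\xi_{k+1}\rangle$ plus remainders, apply a Grama--Fan--Shao-type Cram\'er moderate deviation theorem to the normalized martingale (with variance stability from an ergodic concentration inequality for $(\theta_k)$), and control the Milstein correction $\mathcal{R}$ by truncating $\xi_{k+1}$ at level $\mathbf{R}=c\Arrowvert\nabla\sigma\Arrowvert_\infty^{-3/4}\eta^{-1/8}$, which is exactly how the exponents $3/4$ and $1/8$ arise. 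The only minor inaccuracy is the attribution of the $\eta^{1/2}|\ln\eta|$ term, which in the paper comes from the $(1+x)\delta_n|\ln\delta_n|$ contribution of the martingale moderate-deviation theorem (with $\delta_n=c\eta^{1/2}$) rather than from the boundary/drift remainder.
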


Finally, we present the self-normalized Cram\'{e}r-type moderate deviations for the Milstein scheme for SDE (\ref{SDE}).
\begin{theorem}\label{S}
Let {\bf Assumptions} {\bf\ref{conditions}} and {\bf\ref{conditions2}} hold, and $h\in \mathcal{C}_b^2(\R^d,\R)$. Then we have for all $c\Arrowvert\nabla\sigma\Arrowvert_{\infty}^{3/4}\eta^{1/8}
\leq x=o(\Arrowvert\nabla\sigma\Arrowvert_{\infty}^{-3/4}\eta^{-1/8})$,
$$
\left\lvert\ln\frac{\P\big(S_\eta>x\big)}{1-\Phi(x)}\right\rvert\leq c\left(x^3\eta^{1/2}+x^2\eta^{1/2}+x\Arrowvert\nabla\sigma\Arrowvert_{\infty}^{3/4}\eta^{1/8}+\eta^{1/2}\big|\ln\eta\big|\right).
$$
In particular, it implies that
$$
\sup_{0\leq x=o(\Arrowvert\nabla\sigma\Arrowvert_{\infty}^{-3/4}\eta^{-1/8})}\left\lvert\frac{\P\big(S_\eta>x\big)}{1-\Phi(x)}-1\right\rvert\to 0,\quad as\quad \eta\to 0.
$$

Moreover, the same results also hold when $S_\eta$ is replaced by $-S_\eta$.
\end{theorem}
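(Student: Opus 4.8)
The plan is to deduce Theorem~\ref{S} from the normalized estimate of Theorem~\ref{W} by quantitatively controlling the random self-normalizing factor. Write $N:=[\eta^{-2}]$, $\mathcal{F}_k:=\sigma(\theta_0,\xi_1,\dots,\xi_k)$, $v_k:=\sigma(\theta_k)^{\mathrm T}\nabla f_h(\theta_k)$ and $M_{k+1}:=\xi_{k+1}^{\mathrm T}v_k$. Because the Milstein update \eqref{Milstein} yields the identity $\theta_{k+1}-\theta_k-\eta b(\theta_k)-\tfrac12\eta\mathcal{R}(\theta_k,\xi_{k+1})=\sqrt{\eta}\,\sigma(\theta_k)\xi_{k+1}$, the normalizers in \eqref{V} reduce to $\mathcal{V}_\eta=\tfrac1N\sum_{k=0}^{N-1}M_{k+1}^{2}$ and $\mathcal{Y}_\eta=\tfrac1N\sum_{k=0}^{N-1}\|v_k\|_2^{2}=\tfrac1N\sum_{k=0}^{N-1}\E[M_{k+1}^{2}\mid\mathcal{F}_k]$; that is, $\mathcal{V}_\eta$ and $\mathcal{Y}_\eta$ are, respectively, the sum of squares of the martingale increments $M_{k+1}$ and its conditional-variance counterpart. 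Moreover, conditionally on $\mathcal{F}_k$ the increment $M_{k+1}$ is $N(0,\|v_k\|_2^{2})$, and $\|v_k\|_2\le C$ by \eqref{boundedness} and the boundedness of $\sigma$. From the definitions \eqref{Y} one then has the exact identity $S_\eta=W_\eta\sqrt{\mathcal{Y}_\eta/\mathcal{V}_\eta}$, so it suffices to combine Theorem~\ref{W} with a deviation bound for the ratio $\mathcal{V}_\eta/\mathcal{Y}_\eta$.

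For the latter, observe that
\begin{equation*}
\mathcal{V}_\eta-\mathcal{Y}_\eta=\frac1N\sum_{k=0}^{N-1}\|v_k\|_2^{2}\Bigl(\bigl(\xi_{k+1}^{\mathrm T}v_k/\|v_k\|_2\bigr)^{2}-1\Bigr)
\end{equation*}
is a normalized martingale whose increments are centred and sub-exponential given $\mathcal{F}_k$, with $\mathcal{F}_k$-conditional variances bounded by $C$; a Freedman/Bernstein inequality then gives $\P(|\mathcal{V}_\eta-\mathcal{Y}_\eta|>t)\le 2\exp\{-cN\min(t^{2},t)\}$. Likewise $\mathcal{Y}_\eta$ is a bounded additive functional of the Milstein chain, which is geometrically ergodic with spectral gap of order $\eta$ by the dissipativity \eqref{Haosan} together with \eqref{|x|^2} and Assumption~\ref{conditions2}; hence $\mathcal{Y}_\eta\to\pi(\|\sigma^{\mathrm T}\nabla f_h\|_2^{2})=:\beta$ and $\P(\mathcal{Y}_\eta\le\beta/2)\le\exp\{-c\eta^{-1}\}$, where $\beta>0$ whenever $h$ is non-constant (the case $h$ constant being trivial). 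Consequently, off an event of probability at most $2\exp\{-cN\min(t^{2},t)\}+\exp\{-c\eta^{-1}\}$ one has $\bigl|\sqrt{\mathcal{Y}_\eta/\mathcal{V}_\eta}-1\bigr|\le Ct$ for all small $t$.

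Now fix $x$ in the asserted range and run the standard sandwiching: for every $\delta>0$,
\begin{equation*}
\P(S_\eta>x)\le\P\bigl(W_\eta>x(1-C\delta)\bigr)+\P\bigl(\bigl|\sqrt{\mathcal{Y}_\eta/\mathcal{V}_\eta}-1\bigr|>C\delta\bigr),
\end{equation*}
and symmetrically from below with $x(1+C\delta)$. Theorem~\ref{W} bounds $\ln\frac{\P(W_\eta>x(1\mp C\delta))}{1-\Phi(x(1\mp C\delta))}$ by $c\bigl(x^{3}\eta^{1/2}+x\|\nabla\sigma\|_\infty^{3/4}\eta^{1/8}+\eta^{1/2}|\ln\eta|\bigr)$ — these being exactly the discretization errors coming from the Milstein correction $\mathcal{R}$, from the Stein equation \eqref{Stein} and from the weak error of the scheme. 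Taking $\delta\asymp\eta^{1/2}$ makes the second probability at most $\exp\{-c\eta^{-1}\}$, negligible against $1-\Phi(x)$ throughout the range; and replacing the argument $x$ by $x(1\mp C\delta)$ changes $\ln\bigl(1-\Phi(\cdot)\bigr)$ by at most $Cx^{2}\delta\asymp x^{2}\eta^{1/2}$, which is precisely the extra $x^{2}\eta^{1/2}$ term appearing in Theorem~\ref{S} but not in Theorem~\ref{W}. Assembling the upper and lower sandwiches yields the claimed two-sided bound on $\ln\frac{\P(S_\eta>x)}{1-\Phi(x)}$; the ``in particular'' statement follows since $x^{3}\eta^{1/2}+x^{2}\eta^{1/2}+x\|\nabla\sigma\|_\infty^{3/4}\eta^{1/8}+\eta^{1/2}|\ln\eta|\to0$ once $x=o(\|\nabla\sigma\|_\infty^{-3/4}\eta^{-1/8})$, and the $-S_\eta$ version follows from $\xi_k\overset{d}{=}-\xi_k$.

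I expect the main obstacle to be the \emph{uniform} calibration of $\delta$ up to the edge $x=o(\|\nabla\sigma\|_\infty^{-3/4}\eta^{-1/8})$: $\delta$ must be small enough that the induced shift $Cx^{2}\delta$ keeps the advertised order, yet large enough that the tail $2\exp\{-cN\min(\delta^{2},\delta)\}+\exp\{-c\eta^{-1}\}$ continues to dominate $1-\Phi(x)$ over the whole regime, and the $\|\nabla\sigma\|_\infty$-dependence that enters through $\mathcal{R}$ must be propagated consistently through both Theorem~\ref{W} and the Bernstein bound for $\mathcal{V}_\eta-\mathcal{Y}_\eta$. (The other substantial ingredient, Theorem~\ref{W} itself — proved via the martingale representation $\eta^{-1/2}(\Pi_\eta(h)-\pi(h))=-\eta\sum_{k=0}^{N-1}M_{k+1}+R_\eta$ obtained from a fourth-order Taylor expansion of $f_h$ along \eqref{Milstein} and the Stein equation, followed by a Cram\'er-type moderate deviation for the conditionally Gaussian martingale $(M_{k+1})$ with the remainder $R_\eta$ absorbed by sub-exponential martingale inequalities and \eqref{boundedness} — is taken as given here.)
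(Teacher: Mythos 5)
Your proposal is correct and follows essentially the same route as the paper: the paper likewise writes $\mathcal{V}_\eta-\mathcal{Y}_\eta$ as a sum of martingale differences $\big\Arrowvert(\sigma(\theta_k)\xi_{k+1})^{\rm T}\nabla f_h(\theta_k)\big\Arrowvert_2^2-\big\Arrowvert\sigma(\theta_k)^{\rm T}\nabla f_h(\theta_k)\big\Arrowvert_2^2$ controlled by a Bernstein-type martingale inequality (Proposition \ref{VY}), controls the deviation of $\mathcal{Y}_\eta$ from its mean via the ergodicity-based concentration of Proposition \ref{EY}, and sandwiches $\P(S_\eta>x)$ between $\P(W_\eta\geq x\sqrt{1\mp\varepsilon_x})$ with $\varepsilon_x=c_0\eta^{1/2}$, which produces exactly the extra $x^2\eta^{1/2}$ term you identify. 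The only cosmetic difference is your normalization by $\pi(\Arrowvert\sigma^{\rm T}\nabla f_h\Arrowvert_2^2)$ versus the paper's convention $\E\mathcal{Y}_\eta=1$.
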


\section{Preliminary propositions and Notations}\label{preli}

To ensure a rigorous analytical framework, this section first introduces some notations, then articulates the principal proof techniques, and finally establishes auxiliary propositions which will be employed in later sections.

{\bf Notations.} We now introduce the following notations.
\begin{enumerate}
\item For $f\in\mathcal{C}^2(\R^d,\R)$ and $v, v_1, v_2,x\in \R^d$, the directional derivative $\nabla_v f(x)$ and $\nabla_{v_2}\nabla_{v_1} f(x)$ are  defined by
\begin{align*}
    \nabla_v f(x)\ = \ \lim_{\epsilon \rightarrow 0} \frac{f(x+\epsilon v)-f(x)}{\epsilon}\quad \text{and} \quad \nabla_{v_2} \nabla_{v_1} f(x)\ = \ \lim_{\epsilon \rightarrow 0} \frac{\nabla_{v_1}f(x+\epsilon v_2)-\nabla_{v_1}f(x)}{\epsilon}.
\end{align*}
Let $\nabla f(x)\in \mathbb{R}^d$ and $\nabla^2 f(x)\in \mathbb{R}^{d \times d}$ denote the gradient and the Hessian of $f$, respectively.
It is known that
$\nabla_v f(x) = \left\langle \nabla f(x), v\right\rangle$ and
$\nabla_{v_2} \nabla_{v_1} f(x)=\left\langle \nabla^2 f(x), v_1 v^{\rm T}_2\right\rangle_{{\rm HS}}$.

\item Similarly, for a second-order differentiable function $f=\left(f_1,\dots, f_d\right)^{\rm T}:\R^d \rightarrow \R^d$, define $\nabla f(x)=\left(\nabla f_1(x), \dots, \nabla f_d(x)\right) \in \mathbb{R}^{d \times d}$ and $\nabla^2 f(x)=\left\{\nabla^2 f_i(x)\right\}_{i=1}^d \in \mathbb{R}^{d \times d \times d}$. In this case, we have $\nabla_v f(x)=[\nabla f(x)]^{\rm T} v$,
$$
\nabla_{v_2} \nabla_{v_1} f(x)=\left\{ \left\langle \nabla^2 f_1(x), v_1 v^{\rm T}_2\right\rangle_{{\rm HS}},\dots, \left\langle \nabla^2 f_d(x), v_1 v^{\rm T}_2\right\rangle_{{\rm HS}}\right\}^{\rm T},
$$
and for any tensor $A\in\mathbb{R}^{d\times d\times d},$ $\left\langle \left\langle A,v_{1}v_{2}^{\rm T}\right\rangle_{{\rm {HS}}},v_{3}\right\rangle=\sum_{i,j,k=1}^{d}A_{ijk}v_{1}^{(i)}v_{2}^{(j)}v_{3}^{(k)}$ with $v_{l}^{(i)}$ is the $i$-th component of the vector $v_{l},$ $l=1,2,3.$

\item For $M\in\mathcal{C}^{2}(\mathbb{R}^{d},\mathbb{R}^{d\times d})$ and $v,v_{1},v_{2},x\in\mathbb{R}^{d},$ the directional derivative $\nabla_{v}M(x)$ and $\nabla_{v_{2}}\nabla_{v_{1}}M(x)$ are defined by
\begin{align*}
\nabla_{v}M(x)=\lim_{\epsilon\rightarrow0}\frac{M(x+\epsilon v)-M(x)}{\epsilon}\quad \text{and} \quad \nabla_{v_{2}}\nabla_{v_{1}}M(x)=\lim_{\epsilon\rightarrow0}\frac{\nabla_{v_{1}}M(x+\epsilon v_{2})-\nabla_{v_{1}}M(x)}{\epsilon}.
\end{align*}

\item For $f\in\mathcal{C}^2(\R^d,\R)$, define the operator norm of $\nabla^{2} f(x)$ by
\begin{align*}
\|\nabla^{2} f(x)\|_{\rm op}= \sup_{|v_{1}|,|v_{2}|=1} |\nabla_{v_{2}} \nabla_{v_{1}} f(x)|\quad \text{and}\quad \|\nabla^{2} f\|_{{\rm op}, \infty}= \sup_{x \in \mathbb{R}^{d}} \|\nabla^{2} f(x)\|_{\rm op}.
\end{align*}
We often drop the subscript "{\rm op}" in the definitions above and simply write $\|\nabla^{2} f(x)\|=\|\nabla^{2} f(x)\|_{\rm op}$ and  $\|\nabla^{2} f\|_{\infty}=\|\nabla^{2} f\|_{{\rm op}, \infty}$ if no confusions arise. For higher rank tensors, we can define them analogously.

\item Let $\{a_{n}\}_{n\geq1}$ and $\{b_{n}\}_{n\geq1}$ be two nonnegative real number sequences, if there exists some $C>0$ such that $a_{n}\leq Cb_{n}$, we write $a_{n}=O(b_{n})$. If $\lim_{n\rightarrow\infty}\frac{a_{n}}{b_{n}}=0$, we write $a_{n}=o(b_{n})$.
\end{enumerate}

\begin{rmk}\label{Re}
According to the definition of directional derivative, for any $x\in\mathbb{R}^{d}$ and $k\geq0$, we have
\begin{align*}
\left(\nabla_{\sigma(x)\xi_{k+1}}\sigma(x)\right)\xi_{k+1}=\sum_{j_{1}=1}^{d}\sum_{j_{2}=1}^{d}\sum_{l=1}^{d}\sigma_{l,j_{1}}(x)\frac{\partial \sigma_{j_{2}}(x)}{\partial x^{l}}\xi_{k+1}^{j_{1}}\xi_{k+1}^{j_{2}},
\end{align*}
where $x=(x^{1},\cdots,x^{d})\in\mathbb{R}^{d}$, $\sigma(x)=\left(\sigma_{i,j}(x)\right)_{i,j\in\{1,\cdots,d\}}$ and $\sigma_{j}(x)=\left(\sigma_{1,j}(x),\cdots,\sigma_{d,j}(x)\right)^{\rm T}$ for any $j=1,\cdots,d$. Notice that $\xi_{k+1}$ is a $d$-dimensional standard normal random vectors, we have $\mathbb{E}\left(\xi_{k+1}^{j_{1}}\xi_{k+1}^{j_{2}}\right)=0$ and $\mathbb{E}\left(\big(\xi_{k+1}^{j_{1}}\big)^{2}\right)=1$, $j_{1}\neq j_{2}$. Consequently,
\begin{align*}
\mathbb{E}\left(\left(\nabla_{\sigma(\theta_k)\xi_{k+1}}\sigma(\theta_k)\right)\xi_{k+1}\right)=\sum_{j=1}^{d}\sum_{l=1}^{d}\sigma_{l,j}(x)\frac{\partial \sigma_{j}(x)}{\partial x^{l}}.
\end{align*}
Hence, the Milstein scheme \eqref{Milstein} is equivalent to the form studied in \cite{WL19}.
\end{rmk}

\subsection{The key decomposition of $\Pi_{\eta}(\cdot)$}

To establish our main results, we employ a decomposition strategy, in which we split $\eta^{-\frac{1}{2}}\left(\Pi_{\eta}(h)-\pi(h)\right)$ into the sum of a martingale difference sequence and some asymptotically negligible remainder terms, as shown in \eqref{decom} below.

Without loss of generality, we assume from now on that $m=\eta^{-2}$ is an integer. By Stein's equation \eqref{Stein}, we have
\begin{align*}
\Pi_\eta(h)-\pi(h)=\eta\big(f_h(\theta_m)-f_h(\theta_0)\big)+\eta\sum_{k=0}^{m-1}\big(\mathcal{A}f_h(\theta_k)\eta-\big(f_h(\theta_{k+1})-f_h(\theta_k)\big)\big).
\end{align*}
Notice that $\triangle\theta_k=\theta_{k+1}-\theta_k=\eta b(\theta_k)+\sqrt{\eta}\sigma(\theta_k)\xi_{k+1}+\frac{1}{2}\eta\mathcal{R}(\theta_k,\xi_{k+1})$. Via the Taylor expansion, we have
\begin{align*}
&\mathcal{A}f_h(\theta_k)\eta-\big(f_h(\theta_{k+1})-f_h(\theta_k)\big)\\
=&\eta\big<b(\theta_k),\nabla f_h(\theta_k)\big>+\frac{1}{2}\eta\langle\sigma(\theta_k)\sigma(\theta_k)^{\rm T},\nabla^2 f_h(\theta_k)\rangle_{\rm{HS}}-\big<\nabla f_h(\theta_k),\triangle\theta_k\big>\\&
-\frac{1}{2}\big<\nabla^2 f_h(\theta_k),(\triangle\theta_k)(\triangle\theta_k)^{\rm T}\big>_{\rm{HS}}-\frac{1}{2}\int_{0}^{1}\left(1-t\right)^2\nabla_{\triangle\theta_k}\nabla_{\triangle\theta_k}\nabla_{\triangle\theta_k} f_h(\theta_k+t\triangle\theta_k)\mathrm{d}t\\
=&-\sqrt{\eta}\left<\nabla f_h(\theta_k),\sigma(\theta_k)\xi_{k+1}\right>-\frac{1}{2}\eta\left<\nabla f_h(\theta_k),\mathcal{R}(\theta_k,\xi_{k+1})\right>+\frac{1}{2}\eta\big<\sigma(\theta_k)\sigma(\theta_k)^{\rm T},\nabla^2 f_h(\theta_k)\big>_{\rm{HS}}
\\&-\frac{1}{2}\big<\nabla^2 f_h(\theta_k),(\triangle\theta_k)(\triangle\theta_k)^{\rm T}\big>_{\rm{HS}}-\frac{1}{2}\int_{0}^{1}\left(1-t\right)^2\nabla_{\triangle\theta_k}\nabla_{\triangle\theta_k}\nabla_{\triangle\theta_k} f_h(\theta_k+t\triangle\theta_k)\mathrm{d}t.
\end{align*}
Then, we can obtain
\begin{align}\label{decom}
    \eta^{-1/2}(\Pi_\eta(h)-\pi(h))=\mathcal{H}_\eta+\mathcal{R}_\eta,
\end{align}
where, as we shall see below, $\mathcal{H}_\eta$ is a martingale and $\mathcal{R}_\eta$ is a remainder term, given by
$$
\mathcal{H}_\eta=-\eta\sum_{k=0}^{m-1}\big<\nabla f_h(\theta_k),\sigma(\theta_k)\xi_{k+1}\big>,\quad \mathcal{R}_\eta=-\sum_{i=1}^{13}\mathcal{R}_{\eta,i},
$$
with
\begin{align*}
\mathcal{R}_{\eta,1}=&\sqrt{\eta}\left(f_h(\theta_0)-f_h(\theta_m)\right),\\
\mathcal{R}_{\eta,2}=&\frac{\eta^{\frac{3}{2}}}{2}\sum_{k=0}^{m-1}\left<\nabla^2f_h(\theta_k),\left(\sigma(\theta_k)\xi_{k+1}\right)\left(\sigma(\theta_k)\xi_{k+1}\right)^{\rm T}-\sigma(\theta_k)\sigma(\theta_k)^{\rm T}\right>_{\rm{HS}},\\
\mathcal{R}_{\eta,3}=&\frac{\eta^2}{2}\sum_{k=0}^{m-1}\left(\left<\nabla^2f_h(\theta_k),b(\theta_k)\left(\sigma(\theta_k)\xi_{k+1}\right)^{\rm T}\right>_{\rm{HS}}+\left<\nabla^2f_h(\theta_k),\sigma(\theta_k)\xi_{k+1}\left(b(\theta_k)\right)^{\rm T}\right>_{\rm{HS}}\right),\\
\mathcal{R}_{\eta,4}=&\frac{\eta^2}{2}\sum_{k=0}^{m-1}\int_0^1\left(1-t\right)^2\nabla_{\sigma(\theta_k)\xi_{k+1}}\nabla_{\sigma(\theta_k)\xi_{k+1}}\nabla_{\sigma(\theta_k)\xi_{k+1}}f_h(\theta_k+t\triangle\theta_k)\mathrm{d}t,\\
\mathcal{R}_{\eta,5}=&\frac{\eta^{\frac{5}{2}}}{2}\sum_{k=0}^{m-1}\left<\nabla^2f_h(\theta_k),b(\theta_k)\left(b(\theta_k)\right)^{\rm T}\right>_{\rm{HS}}
\\&+\frac{\eta^{\frac{7}{2}}}{2}\sum_{k=0}^{m-1}\int_0^1\left(1-t\right)^2\nabla_{b(\theta_k)}\nabla_{b(\theta_k)}\nabla_{b(\theta_k)}f_h(\theta_k+t\triangle\theta_k)\mathrm{d}t,\\
\mathcal{R}_{\eta,6}=&\frac{3\eta^{\frac{5}{2}}}{2}\sum_{k=0}^{m-1}\int_0^1\left(1-t\right)^2\left(\nabla_{b(\theta_k)}\nabla_{\sigma(\theta_k)\xi_{k+1}}\nabla_{\sigma(\theta_k)\xi_{k+1}}f_h(\theta_k+t\triangle\theta_k)\right.\\&\left.\qquad\qquad\qquad\qquad\qquad\qquad\qquad\qquad+\sqrt{\eta}\nabla_{b(\theta_k)}\nabla_{b(\theta_k)}\nabla_{\sigma(\theta_k)\xi_{k+1}}f_h(\theta_k+t\triangle\theta_k)\right)\mathrm{d}t,\\
\mathcal{R}_{\eta,7}=&\frac{1}{2}\eta^{\frac{3}{2}}\sum_{k=0}^{m-1}\big<\nabla f_h(\theta_k),\mathcal{R}(\theta_k,\xi_{k+1})\big>,\\
\mathcal{R}_{\eta,8}=&\frac{1}{4}\eta^{\frac{5}{2}}\sum_{k=0}^{m-1}\Big(\big<\nabla^2f_h(\theta_k),b(\theta_k)\big(\mathcal{R}(\theta_k,\xi_{k+1})\big)^{\rm T}\big>_{\rm{HS}}+\big<\nabla^2f_h(\theta_k),\mathcal{R}(\theta_k,\xi_{k+1})\big(b(\theta_k)\big)^{\rm T}\big>_{\rm{HS}}\Big),\\
\mathcal{R}_{\eta,9}=&\frac{1}{4}\eta^2\sum_{k=0}^{m-1}\Big(\big<\nabla^2f_h(\theta_k),\sigma(\theta_k)\xi_{k+1}\big(\mathcal{R}(\theta_k,\xi_{k+1})\big)^{\rm T}\big>_{\rm{HS}}\\&\qquad\qquad\qquad\qquad\qquad\qquad\qquad\qquad+\big<\nabla^2f_h(\theta_k),\mathcal{R}(\theta_k,\xi_{k+1})\big(\sigma(\theta_k)\xi_{k+1}\big)^{\rm T}\big>_{\rm{HS}}\Big),\\
\mathcal{R}_{\eta,10}=&\frac{1}{8}\eta^{\frac{5}{2}}\sum_{k=0}^{m-1}\big<\nabla^2f_h(\theta_k),\mathcal{R}(\theta_k,\xi_{k+1})\big(\mathcal{R}(\theta_k,\xi_{k+1})\big)^{\rm T}\big>_{\rm{HS}}\\
&+\frac{1}{16}\eta^{\frac{7}{2}}\sum_{k=0}^{m-1}\int_{0}^{1}\left(1-t\right)^2\nabla_{\mathcal{R}(\theta_k,\xi_{k+1})}\nabla_{\mathcal{R}(\theta_k,\xi_{k+1})}\nabla_{\mathcal{R}(\theta_k,\xi_{k+1})}f_h\big(\theta_k+t\triangle\theta_k\big)\mathrm{d}t,\\
\mathcal{R}_{\eta,11}=&\frac{3}{4}\eta^{\frac{7}{2}}\sum_{k=0}^{m-1}\int_{0}^{1}\left(1-t\right)^2\Big(\nabla_{b(\theta_k)}\nabla_{b(\theta_k)}\nabla_{\mathcal{R}(\theta_k,\xi_{k+1})}f_h\big(\theta_k+t\triangle\theta_k\big)\\&\qquad\qquad\qquad\qquad\qquad\qquad\qquad+\frac{1}{2}\nabla_{b(\theta_k)}\nabla_{\mathcal{R}(\theta_k,\xi_{k+1})}\nabla_{\mathcal{R}(\theta_k,\xi_{k+1})}f_h\big(\theta_k+t\triangle\theta_k\big)\Big)\mathrm{d}t,\\
\mathcal{R}_{\eta,12}=&\frac{3}{2}\eta^3\sum_{k=0}^{m-1}\int_{0}^{1}\left(1-t\right)^2\nabla_{b(\theta_k)}\nabla_{\sigma(\theta_k)\xi_{k+1}}\nabla_{\mathcal{R}(\theta_k,\xi_{k+1})}f_h\big(\theta_k+t\triangle\theta_k\big)\mathrm{d}t,\\
\mathcal{R}_{\eta,13}=&\frac{3}{4}\eta^{\frac{5}{2}}\sum_{k=0}^{m-1}\int_{0}^{1}\left(1-t\right)^2\Big(\nabla_{\sigma(\theta_k)\xi_{k+1}}\nabla_{\sigma(\theta_k)\xi_{k+1}}\nabla_{\mathcal{R}(\theta_k,\xi_{k+1})}f_h\big(\theta_k+t\triangle\theta_k\big)\\
&\qquad\qquad\qquad\qquad\qquad\qquad+\frac{1}{2}\sqrt{\eta}\nabla_{\sigma(\theta_k)\xi_{k+1}}\nabla_{\mathcal{R}(\theta_k,\xi_{k+1})}\nabla_{\mathcal{R}(\theta_k,\xi_{k+1})}f_h\big(\theta_k+t\triangle\theta_k\big)\Big)\mathrm{d}t.
\end{align*}

\subsection{Preliminary propositions}

In this subsection, we present several key propositions that play crucial roles in our analysis. First, under {\bf Assumption} {\bf\ref{conditions}}, by Lemma 2.3 in \cite{Lu}, SDE \eqref{SDE} is ergodic with a unique invariant measure $\pi$.

Let $\E_{\theta_k}(\cdot)$ and $\P_{\theta_k}(\cdot)$ be respectively the conditional expectation $\E(\cdot|\theta_k)$ and conditional probability $\P(\cdot|\theta_k)$. For the ergodicity of the process $\theta_{k}$, we have the following lemma.

\begin{lem}\label{ergo}
 Choose the Lyapunov function $V(x)=1+\Arrowvert x\Arrowvert_2^2,\,x\in \R^d$, and suppose {\bf Assumption} {\bf\ref{conditions}} holds. If $\theta_0=x_0$ is fixed, we denote the process $(\theta_k)_{k\geq 0}$ by $(\theta_k^{x_0})_{k\geq 0}$. Then $(\theta_{k}^{x_0})_{k\geq0}$ is ergodic with a unique invariant measure $\pi_{\eta}$ such that
\begin{align}\label{CO2}
\sup_{|h|\leq V}\left|\mathbb{E}h(\theta_{k}^{x_0})-\pi_{\eta}(h)\right|\leq u_{1}\left(V(x_0)+\pi_{\eta}(V)\right)(1+\eta^{-1})e^{-u_{2}\eta},
\end{align}
for some positive constants $u_1$ and $u_2$ independent of $\eta$. In addition, for any $k\geq0$ and $x_0\in\mathbb{R}^{d}$, there exists a constant $C>0$ which is independent of $\eta$ such that 
\begin{align*}
\E\big(\Arrowvert\theta_k^{x_0}\Arrowvert_2^4\big)\leq C\big(1+\Arrowvert x_0\Arrowvert_2^{4}\big).
\end{align*}
\end{lem}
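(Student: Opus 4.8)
The plan is to prove Lemma~\ref{ergo} in three stages: first a uniform-in-$\eta$ drift (Foster--Lyapunov) estimate for the Markov chain $(\theta_k^{x_0})_{k\ge0}$, then a minorization condition on level sets of $V$, and finally an appeal to the standard geometric-ergodicity machinery (e.g.\ Meyn--Tweedie) to extract existence and uniqueness of $\pi_\eta$ together with the quantitative bound \eqref{CO2}; the moment bound $\E\|\theta_k^{x_0}\|_2^4\le C(1+\|x_0\|_2^4)$ will come out of an iterated version of the same drift inequality.

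For the drift estimate I would compute $\E_{\theta_k}V(\theta_{k+1})=1+\E_{\theta_k}\|\theta_{k+1}\|_2^2$ directly from \eqref{Milstein}. Expanding $\|\theta_{k+1}\|_2^2=\|\theta_k+\eta b(\theta_k)+\sqrt\eta\,\sigma(\theta_k)\xi_{k+1}+\tfrac12\eta\mathcal R(\theta_k,\xi_{k+1})\|_2^2$ and taking conditional expectation, the cross term $2\sqrt\eta\langle\theta_k+\eta b(\theta_k),\sigma(\theta_k)\xi_{k+1}\rangle$ vanishes (mean zero), the term $2\langle\theta_k,b(\theta_k)\rangle\eta$ is controlled by \eqref{|x|^2} as $-K_1\eta\|\theta_k\|_2^2/2+\eta(K_2+\cdots)$, and the remaining pieces — $\eta\,\E_{\theta_k}\|\sigma(\theta_k)\xi_{k+1}\|_2^2\le\eta\,d\,\|\sigma\|_\infty^2$, the $\eta^2\|b(\theta_k)\|_2^2$ term bounded via \eqref{|g(x)|^2}, and all terms involving $\mathcal R$ bounded using $\|\nabla\sigma\|_\infty<\infty$, $\|\sigma\|_\infty<\infty$ and finite Gaussian moments (noting $\E_{\theta_k}\mathcal R(\theta_k,\xi_{k+1})=0$ kills another cross term) — are each $O(\eta^{3/2})$ or smaller times $(1+\|\theta_k\|_2^2)$ with small constants. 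Choosing $\eta$ small enough that the $-\tfrac12K_1\eta$ dominates the $O(\eta^{3/2})$ corrections yields
\begin{align*}
\E_{\theta_k}V(\theta_{k+1})\le (1-c_1\eta)V(\theta_k)+c_2\eta
\end{align*}
for constants $c_1,c_2>0$ independent of $\eta$. Iterating gives $\E V(\theta_k^{x_0})\le e^{-c_1\eta k}V(x_0)+c_2/c_1$, which is exactly the Lyapunov input for geometric ergodicity; the $(1+\eta^{-1})$ prefactor in \eqref{CO2} is the usual artifact of converting the per-step contraction rate $1-c_1\eta$ into the bound via a renewal/coupling argument over $\lceil 1/\eta\rceil$-step blocks. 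For the fourth moment I would run the same calculation with $\|\theta_{k+1}\|_2^4$; the leading drift term is $-2K_1\eta\|\theta_k\|_2^4$ up to lower order and the Gaussian/$\mathcal R$ contributions only involve moments of $\xi$ up to order $4$ (finite), giving $\E_{\theta_k}\|\theta_{k+1}\|_2^4\le(1-c_1'\eta)\|\theta_k\|_2^4+c_2'\eta(1+\|\theta_k\|_2^2)$, and after absorbing the $\|\theta_k\|_2^2$ term with Young's inequality, iteration produces the claimed uniform-in-$k$ bound.

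For the minorization I would use that $\sigma$ is bounded and positive definite, so uniformly in $x$ the law of $\theta_{k+1}$ given $\theta_k=x$ has a Gaussian-type component $\mathcal N(x+\eta b(x),\eta\sigma(x)\sigma(x)^{\rm T})$ perturbed by the bounded $\mathcal R$ term; on any compact level set $\{V\le R\}$ this gives a one-step (or a few-step) density bounded below by a fixed multiple of Lebesgue measure on a ball, hence a small-set condition. Together with the geometric drift this is precisely the hypothesis of the standard geometric-ergodicity theorem, which delivers a unique invariant $\pi_\eta$, finiteness of $\pi_\eta(V)$ (by Fatou applied to the iterated drift bound), and a convergence rate of the form $|\E h(\theta_k^{x_0})-\pi_\eta(h)|\le C\rho^k(V(x_0)+\pi_\eta(V))$ for $|h|\le V$ with $\rho=\rho(\eta)<1$; tracking the $\eta$-dependence $\rho\approx e^{-u_2\eta}$ and the block-length $\eta^{-1}$ reproduces \eqref{CO2}.

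The main obstacle I anticipate is the bookkeeping in the drift computation: the Milstein correction $\mathcal R(\theta_k,\xi_{k+1})$ is quadratic in the Gaussian vector, so $\|\mathcal R\|_2$ has heavy-ish (but still sub-exponential) tails and enters $\|\theta_{k+1}\|_2^2$ through several cross terms, and one must verify carefully that every such term is genuinely $o(\eta)$ relative to the good $-\tfrac12K_1\eta\|\theta_k\|_2^2$ term \emph{with constants uniform in $\eta$} — this is what forces $\eta$ small and what the constant $\gamma$ in Assumption~\ref{conditions2} (and the restriction ``$\gamma$ depending on $K_1$ and $L$'') is calibrated against. A secondary technical point is making the $(1+\eta^{-1})e^{-u_2\eta}$ dependence precise rather than just asserting geometric ergodicity; I would handle this by the standard device of comparing the chain at times $0,N,2N,\dots$ with $N=\lceil1/\eta\rceil$, on which scale the contraction is $O(1)$ and the small-set/coupling constants are $\eta$-uniform, and then interpolating back to every $k$.
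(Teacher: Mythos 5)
Your proposal follows essentially the same route as the paper: a Foster--Lyapunov drift computation for $V(x)=1+\|x\|_2^2$ (with the Gaussian and $\mathcal{R}$ cross terms vanishing in conditional expectation), yielding $\E_{\theta_k}V(\theta_{k+1})\le(1-c\eta)V(\theta_k)+C\eta$, followed by the standard geometric-ergodicity machinery for the rate \eqref{CO2}, and an analogous iterated drift bound for $\widetilde V(x)=1+\|x\|_2^4$ to get the fourth-moment estimate. The only difference is presentational: where you sketch the minorization/small-set step and the Meyn--Tweedie argument explicitly, the paper delegates exactly this to (29) of Roberts--Tweedie and the argument of (A.3) in Jin et al., so the two proofs are the same in substance.
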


\begin{proof}
    We can write by \eqref{Milstein},
$$\delta=\theta_{k+1}-\theta_k=\eta b(\theta_k)+\sqrt{\eta}\sigma(\theta_k)\xi_{k+1}+\frac{1}{2}\eta\mathcal{R}(\theta_k,\xi_{k+1}).$$
Since $\nabla V(x)=2x$ and $\nabla^2V(x)=2I_d$, we have
\begin{align*}
&\E_{\theta_k} \big(V(\theta_{k+1})\big)-V(\theta_k)\\
=&\E_{\theta_k}\Arrowvert\theta_{k+1}\Arrowvert_2^2-\Arrowvert\theta_k\Arrowvert_2^2=\E_{\theta_k}\Arrowvert\delta+\theta_k\Arrowvert_2^2-\Arrowvert \theta_k\Arrowvert_2^2=2\E_{\theta_k}\left<\theta_k,\delta\right>+\E_{\theta_k}\delta\delta^{\rm T}\\
=&\eta\left(\left\langle b(\theta_k),2\theta_k\right\rangle+\left\langle\sigma(\theta_k)\sigma(\theta_k)^{\rm T},I_d\right\rangle_{\rm{HS}}\right)+\eta^2\Arrowvert b(\theta_k)\Arrowvert_2^2+\frac{1}{2}\eta^2\Arrowvert\sigma(\theta_k)\Arrowvert^2\Arrowvert\nabla\sigma(\theta_k)\Arrowvert^2\\
=&\eta\mathcal{A}V(\theta_k)+\eta^2\Arrowvert b(\theta_k)\Arrowvert_2^2+\frac{1}{2}\eta^2\Arrowvert\sigma(\theta_k)\Arrowvert^2\Arrowvert\nabla\sigma(\theta_k)\Arrowvert^2,
\end{align*}
and \eqref{|x|^2} further implies that
\begin{align*}
\mathcal{A}V(\theta_k)=&2\left\langle b(\theta_k),\theta_k\right\rangle+\frac{1}{2}\left\langle\sigma(\theta_k)\sigma(\theta_k)^{\rm T},2I_d\right\rangle_{\rm{HS}}
\\\leq&2\left(-\frac{K_1}{2}\Arrowvert \theta_k\Arrowvert_2^2+\left(K_2+\frac{1}{2K_1}\Arrowvert b(0)\Arrowvert_2^2\right)\right)+\Arrowvert\sigma\Arrowvert_{\infty}^2
\leq-\frac{K_{1}}{2}V(\theta_k)+C_1I_A(\theta_k),
\end{align*}
where $C_1=\frac{K_{1}}{2}+2K_2+\frac{1}{K_1}\Arrowvert b(0)\Arrowvert_2^2+\Arrowvert\sigma\Arrowvert_{\infty}^2$ and $A=\left\{x:\Arrowvert x\Arrowvert_2^2\leq\frac{2C_2}{K_1}-1\right\}$ with $C_{2}=K_{1}+2K_2+\frac{1}{K_1}\Arrowvert b(0)\Arrowvert_2^2+\Arrowvert\sigma\Arrowvert_{\infty}^2$.
Thus there exists a positive constant $C_3>\frac{K_{1}}{2}$ independent of $\eta$ such that for sufficiently small $\eta>0$,
\begin{align*}
&\E_{\theta_k} \big(V(\theta_{k+1})\big)-V(\theta_k)\\
\leq&\eta\left(-K_1V(\theta_k)+C_2\right)+\eta^2\Arrowvert b(\theta_k)\Arrowvert_2^2+\frac{1}{2}\eta^2\Arrowvert\sigma(\theta_k)\Arrowvert^2\Arrowvert\nabla\sigma(\theta_k)\Arrowvert^2\\
\leq&-K_1\eta V(\theta_k)+C_2\eta+\eta^2\left(2L^2\Arrowvert\theta_k\Arrowvert_2^2+2\Arrowvert b(0)\Arrowvert_2^2\right)+\frac{1}{2}\eta^2\Arrowvert\sigma\Arrowvert_{\infty}^2\Arrowvert\nabla\sigma\Arrowvert_{\infty}^2\\
=&\left(-K_1\eta+2L^2\eta^2\right)V(\theta_k)+C_2\eta+\left(2\Arrowvert b(0)\Arrowvert_2^2+\frac{1}{2}\Arrowvert\sigma\Arrowvert_{\infty}^2\Arrowvert\nabla\sigma\Arrowvert_{\infty}^2-2L^2\right)\eta^2\\
\leq&-\frac{K_1}{2}\eta V(\theta_k)+C_3\eta\leq-\frac{K_1}{4}\eta V(\theta_k)+C_3\eta I_B(\theta_k),
\end{align*}
where $C_{3}=C_{2}+2\Arrowvert b(0)\Arrowvert_2^2+\frac{1}{2}\Arrowvert\sigma\Arrowvert_{\infty}^2\Arrowvert\nabla\sigma\Arrowvert_{\infty}^2$ and $B=\left\{x:\Arrowvert x\Arrowvert_2^2\leq\frac{4C_3}{K_1}-1\right\}$.
So we have
\begin{align}\label{CO1}
\E_{\theta_k} \big(V(\theta_{k+1})\big)\leq\left(1-\frac{K_1}{4}\eta\right)V(\theta_k)+C_3\eta I_B(\theta_k).
\end{align}
By (29) in \cite{RT96}, we deduce that for sufficiently small $\eta>0$, $\theta_k$ is ergodic with a unique invariance measure. Next by the same argument as the proof of (A.3) in \cite{Xing}, one can derive from \eqref{CO1} that
\begin{align*}
\sup_{|h|\leq V}\left|\mathbb{E}h(\theta_{k}^{x_0})-\pi_{\eta}(h)\right|\leq u_{1}\left(V(x_0)+\pi_{\eta}(V)\right)(1+\eta^{-1})e^{-u_{2}\eta}
\end{align*}
for some positive constants $u_1$ and $u_2$ independent of $\eta$.

Moreover, let $\widetilde{V}(x)=\Arrowvert x\Arrowvert_2^4+1$, according to (\ref{Milstein}), we can get
\begin{align*}
&\E_{\theta_k}\left(\widetilde{V}(\theta_{k+1})\right)=\E_{\theta_k}\left(\Arrowvert\theta_k+\eta b(\theta_k)+\sqrt{\eta}\sigma(\theta_k)\xi_{k+1}+\frac{1}{2}\eta\mathcal{R}(\theta_k,\xi_{k+1})\Arrowvert_2^4+1\right)\\
=&\Arrowvert\theta_k+\eta b(\theta_k)\Arrowvert_2^4+\E_{\theta_k}\Arrowvert\sqrt{\eta}\sigma(\theta_k)\xi_{k+1}+\frac{1}{2}\eta\mathcal{R}(\theta_k,\xi_{k+1})\Arrowvert_2^4\\
&+6\Arrowvert\theta_k+\eta b(\theta_k)\Arrowvert_2^2\E_{\theta_k}\Arrowvert\sqrt{\eta}\sigma(\theta_k)\xi_{k+1}+\frac{1}{2}\eta\mathcal{R}(\theta_k,\xi_{k+1})\Arrowvert_2^2\\
&+4\Arrowvert\theta_k+\eta b(\theta_k)\Arrowvert_2\E_{\theta_k}\Arrowvert\sqrt{\eta}\sigma(\theta_k)\xi_{k+1}+\frac{1}{2}\eta\mathcal{R}(\theta_k,\xi_{k+1})\Arrowvert_2^3\\
\leq& \left(1-2K_1\eta+c_1\eta^2\right)\Arrowvert\theta_k\Arrowvert_2^4+c_2\eta\Arrowvert\theta_k\Arrowvert_2^2+c_3\eta^{3/2}\Arrowvert\theta_k\Arrowvert_2+c_4\eta^2+1\\
\leq& \left(1-K_1\eta+c_1\eta^2\right)\widetilde{V}(\theta_k)-K_1\eta\Arrowvert\theta_k\Arrowvert_2^4+c_2\eta\Arrowvert\theta_k\Arrowvert_2^2+c_4\eta^2+K_1\eta,
\end{align*}
where the last inequality comes from $x\leq x^2+\frac{1}{4}$ and $c_1,c_2,c_3,c_4$ depend on $\sigma$, $\Arrowvert b(0)\Arrowvert_2$, $K_1$ and $L$. Then for any sufficiently small $\eta>0$, we have
\begin{align}\label{|x|^4+1}
\E_{\theta_k}\left(\widetilde{V}(\theta_{k+1})\right)\leq\left(1-\frac{K_1}{2}\eta\right)\widetilde{V}(\theta_k)+\tilde{b}\eta,
\end{align}
where $\tilde{b}=\frac{c_2^2}{4K_1}+K_1+c_4$. Thus by induction, we have
\begin{align*}
\E_{\theta_k}\left(\widetilde{V}(\theta_{k+1})\right)\leq \left(1-\frac{K_1}{2}\eta\right)^{k+1}\widetilde{V}(x_0)+\tilde{b}\eta\sum_{i=0}^{k}\left(1-\frac{K_1}{2}\eta\right)^{i}
\leq(1+\Arrowvert x_0\Arrowvert_2^{4})+\frac{2\tilde{b}}{K_{1}},
\end{align*} 
and the desired result follows.
\end{proof}

We now establish the following auxiliary proposition.
\begin{pro}\label{EY}
Suppose that {\bf Assumptions} {\bf\ref{conditions}} and {\bf\ref{conditions2}} hold. For any $k\in\mathbb{N}$, it holds for all $y>0$,
\begin{align*}
&\P\left(\left\lvert\sum_{i=0}^{k-1}\Arrowvert\sigma(\theta_i)^{\rm T}\nabla f_{h}(\theta_i)\Arrowvert_2^2-\sum_{i=0}^{k-1}\E\Arrowvert\sigma(\theta_i)^{\rm T}\nabla f_{h}(\theta_i)\Arrowvert_2^2\right\rvert\geq y\right)\\
\leq&
2e^{-cy^2k^{-1}}+c_3\left(1+\eta^{-1}\right)\left(e^{-c_1\eta y}+\frac{e^{-c_2y}}{1-e^{-u_1\eta}}\right),
\end{align*}
here $u_1$, $c_1$, $c_2$ and $c_3$ are positive constants and both are independent of the variables.
\end{pro}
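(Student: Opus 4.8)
\emph{Proof proposal.} The plan is to regard $\sum_{i=0}^{k-1}\big(g(\theta_i)-\E g(\theta_i)\big)$, where $g(x):=\Arrowvert\sigma(x)^{\mathrm T}\nabla f_h(x)\Arrowvert_2^{2}$, as an additive functional of the Markov chain $(\theta_i)_{i\ge0}$, and to combine a martingale concentration inequality with the geometric ergodicity of Lemma~\ref{ergo}. The key structural fact is that $g$ is \emph{bounded}: $\sigma$ is bounded by Assumption~\ref{conditions} and $\nabla f_h$ is bounded by \eqref{boundedness}, so $0\le g\le M:=\Arrowvert\sigma\Arrowvert_\infty^{2}\Arrowvert\nabla f_h\Arrowvert_\infty^{2}$ (and $g$ is Lipschitz, which is convenient but not essential here). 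Besides this I use that $(\theta_i)$ is geometrically ergodic with the quantitative rate \eqref{CO2} and satisfies the drift inequality \eqref{CO1}; running the same Lyapunov argument with the function $e^{\gamma\Arrowvert x\Arrowvert_2^{2}}$ in place of $V$, Assumption~\ref{conditions2} moreover yields a uniform exponential moment $\sup_{i\ge0}\E e^{\gamma\Arrowvert\theta_i\Arrowvert_2^{2}}\le C$.

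Let $P$ denote the one-step transition kernel of $(\theta_i)$ and set $\mathcal F_i=\sigma(\theta_0,\xi_1,\dots,\xi_i)$. I would first write
\begin{align*}
\sum_{i=0}^{k-1}\big(g(\theta_i)-\E g(\theta_i)\big)=\sum_{i=1}^{k-1}\big(g(\theta_i)-Pg(\theta_{i-1})\big)+\big(g(\theta_0)-\E g(\theta_0)\big)+\sum_{i=0}^{k-2}\big(Pg(\theta_i)-\E Pg(\theta_i)\big).
\end{align*}
The first sum is a martingale relative to $(\mathcal F_i)$ with increments bounded by $2M$, so Azuma--Hoeffding bounds the probability that its modulus exceeds $y/3$ by $2e^{-cy^{2}k^{-1}}$, which is precisely the first term of the claim; the middle term is bounded by $2M$ and is harmless (it affects only $y\le 6M$, where the first term already exceeds $1$ for $c$ small). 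What remains is $T_k:=\sum_{i=0}^{k-2}\big(Pg(\theta_i)-\E Pg(\theta_i)\big)$, which is again a centred additive functional but must now be handled through the (slow) mixing of the chain.

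For $T_k$ I would invoke the Poisson equation of the chain for $\varphi:=Pg$ ($0\le\varphi\le M$): with $\widehat\varphi:=\sum_{n\ge0}\big(P^{n}\varphi-\pi_\eta(\varphi)\big)$ one has $\widehat\varphi-P\widehat\varphi=\varphi-\pi_\eta(\varphi)$, whence
\begin{align*}
T_k=\big(\widehat\varphi(\theta_0)-\widehat\varphi(\theta_{k-1})\big)-\E\big(\widehat\varphi(\theta_0)-\widehat\varphi(\theta_{k-1})\big)+\sum_{i=1}^{k-1}\big(\widehat\varphi(\theta_i)-P\widehat\varphi(\theta_{i-1})\big),
\end{align*}
the last sum again being a martingale difference sequence. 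The key in exploiting \eqref{CO2} is to split the series defining $\widehat\varphi$ into short lags, where one keeps only the trivial bound $|P^{n}\varphi-\pi_\eta(\varphi)|\le 2M$ so as not to incur a factor $\eta^{-2}$, and long lags, which are summed geometrically; this yields $|\widehat\varphi(x)|\le C\eta^{-1}\big(|\ln\eta|+\ln(1+\Arrowvert x\Arrowvert_2^{2})\big)$ and hence $|\widehat\varphi(\theta_i)-P\widehat\varphi(\theta_{i-1})|\le C\eta^{-1}\big(|\ln\eta|+\ln(1+\Arrowvert\theta_i\Arrowvert_2^{2})+\ln(1+\Arrowvert\theta_{i-1}\Arrowvert_2^{2})\big)$. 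I would then fix a truncation radius $R=R(y,\eta)$ and discard the event $\{\max_{0\le i\le k}\Arrowvert\theta_i\Arrowvert_2^{2}>R\}$; by a union bound and $\E e^{\gamma\Arrowvert\theta_i\Arrowvert_2^{2}}\le C$ its probability is at most $c\,k\,e^{-\gamma R}$, which, using $k\lesssim\eta^{-2}\asymp(1+\eta^{-1})(1-e^{-u_1\eta})^{-1}$, one repackages as $c_3(1+\eta^{-1})(1-e^{-u_1\eta})^{-1}e^{-c_2 y}$. On the complementary event the martingale $\sum_i\big(\widehat\varphi(\theta_i)-P\widehat\varphi(\theta_{i-1})\big)$ has increments bounded by $C\eta^{-1}(|\ln\eta|+\ln R)$, and a Bernstein-type martingale inequality in the large-$y$ regime gives a bound of the form $c\,e^{-c_1\eta y}$ (absorbing logarithmic factors into the constants); the boundary term $\widehat\varphi(\theta_0)-\widehat\varphi(\theta_{k-1})$ is controlled directly from the same logarithmic bound and the exponential moment, contributing a term of the same type. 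Collecting the pieces, and absorbing the factors $1/3$ into the constants, yields the asserted inequality.

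The genuine obstacle is quantitative rather than conceptual: since $(\theta_i)$ relaxes only on the time scale $\eta^{-1}$, the corrector $\widehat\varphi$ is unavoidably unbounded and carries an $\eta^{-1}$ prefactor, so every invocation of \eqref{CO2} has to be split into a short-lag part (controlled by $2M$) and a long-lag part (controlled geometrically) to avoid losing powers of $\eta$; and the truncation radius $R$ must be calibrated so that both $\P\big(\max_i\Arrowvert\theta_i\Arrowvert_2^{2}>R\big)$ and the Bernstein bound for the truncated martingale fit under $c_3(1+\eta^{-1})\big(e^{-c_1\eta y}+e^{-c_2 y}(1-e^{-u_1\eta})^{-1}\big)$. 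Tracking these $\eta$-dependent constants carefully --- not any individual probabilistic step --- is where the real work lies.
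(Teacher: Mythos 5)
Your overall architecture --- peel off the bounded-increment martingale $g(\theta_i)-Pg(\theta_{i-1})$ to produce the term $2e^{-cy^{2}k^{-1}}$, then treat the residual additive functional through the Poisson equation, a truncation of the trajectory, and a Bernstein bound --- is a legitimate and genuinely different route from the paper's, which simply verifies the drift condition \eqref{CO1} and the convergence bound \eqref{CO2} and then invokes Theorem 6 of \cite{Durmus} together with Lemma 3.3 of \cite{Fan}; those external results require only the polynomial Lyapunov function $V(x)=1+\Arrowvert x\Arrowvert_2^{2}$ and $\E V(\theta_0)<\infty$. The difficulty is that your execution rests on a claim that is false for the Milstein scheme: $\sup_{i\geq0}\E e^{\gamma\Arrowvert\theta_i\Arrowvert_2^{2}}\leq C$. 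The update contains the term $\tfrac12\eta\mathcal{R}(\theta_k,\xi_{k+1})$, which is quadratic in the Gaussian vector $\xi_{k+1}$ (and $\nabla\sigma\not\equiv0$ is assumed), so $\Arrowvert\theta_{k+1}\Arrowvert_2^{2}$ has a component of order $\eta^{2}\Arrowvert\xi_{k+1}\Arrowvert_2^{4}$ and the one-step conditional expectation $\E_{\theta_k}e^{\gamma\Arrowvert\theta_{k+1}\Arrowvert_2^{2}}$ is infinite for every $\gamma>0$; no Lyapunov iteration can repair this. This is precisely the obstruction the paper emphasizes in the remark following Proposition \ref{remainder}, and the reason Proposition \ref{g} is proved only on the truncated event $A=\{|\xi_i|\leq\mathbf{R}\}$ while Lemma \ref{ergo} establishes only fourth moments. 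Without sub-Gaussian tails for $\Arrowvert\theta_i\Arrowvert_2^{2}$, your union bound $\P\big(\max_i\Arrowvert\theta_i\Arrowvert_2^{2}>R\big)\leq c\,k\,e^{-\gamma R}$ fails; at best one has sub-exponential tails in $\Arrowvert\theta_i\Arrowvert_2$, which forces $R\asymp y^{2}$ and changes the calibration you describe.

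A second, quantitative problem: even on the truncated event your corrector-martingale increments are only bounded by $C\eta^{-1}\big(|\ln\eta|+\ln R\big)$, so the large-deviation branch of Bernstein's inequality gives $\exp\{-c\eta y/(|\ln\eta|+\ln R)\}$ rather than $\exp\{-c_1\eta y\}$. The factor $|\ln\eta|$ diverges as $\eta\to0$ (and $\ln R$ grows with $y$), so it cannot be absorbed into a constant $c_1$ that is ``independent of the variables,'' which is what the proposition asserts. Closing the argument along your lines would require either genuinely better control of the corrector (for instance its conditional variance, of order $\eta^{-1}$ per step, instead of its sup-norm) or, as the paper does, importing the concentration inequality of \cite{Durmus}, whose proof circumvents both difficulties.
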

\begin{proof}
    Recall the proof of Lemma \ref{ergo}, we can obtain $\lambda=1-\frac{K_1}{4}\eta$ and $b=C_3\eta$ in (A1) of \cite{Durmus}. Combining the set $A=\left\{x:\Arrowvert x\Arrowvert_2^2\leq\frac{2C_2}{K_1}-1\right\}$ and set $B=\left\{x:\Arrowvert x\Arrowvert_2^2\leq\frac{4C_3}{K_1}-1\right\}$, we can take $d=1+\eta$ in (A2) of \cite{Durmus}, i.e. set $\ell=\left\{x\in\R^d:\Arrowvert x\Arrowvert_2^2\leq\eta\right\}$. Next by \eqref{CO2} and \eqref{CO1}, we obtain the compact set $\ell$ is petite and take $m=1$ and $\epsilon$ be a constant small enough, by (11) in \cite{Durmus} we can obtain $\rho=e^{-u_2\eta}$ and $c=u_1\left(1+\eta^{-1}\right)$ for some positive constants $u_1$ and $u_2$ independent of $\eta$. Then, let $\gamma=0$ and $\theta_0$ satisfy {\bf Assumption} {\bf\ref{conditions2}}, by Theorem 6 in \cite{Durmus}, we have for all $y>0$ that
\begin{align*}
&\P\left(\left\lvert\sum_{i=0}^{k-1}\Arrowvert\sigma(\theta_i)^{\rm T}\nabla f_h(\theta_i)\Arrowvert_2^2-\sum_{i=0}^{k-1}\E\Arrowvert\sigma(\theta_i)^{\rm T}\nabla f_h(\theta_i)\Arrowvert_2^2\right\rvert\geq y\right)\\
\leq&
\P_{\pi_{\eta}}\left(\left\lvert\sum_{i=0}^{k-1}\Arrowvert\sigma(\theta_i)^{\rm T}\nabla f_h(\theta_i)\Arrowvert_2^2-k\pi_\eta\big(\Arrowvert\sigma^{\rm T}\nabla f_h\Arrowvert_2^2\big)\right\rvert\geq y/4\right)
\\&+c\left(e^{-c_1\eta y}+\frac{e^{-c_2y}}{1-e^{-u_1\eta}}\right)u_2\left(1+\eta^{-1}\right)\left(\E\left( V(\theta_0)\right)+\pi_{\eta}(V)\right),
\end{align*}
where $\P_{\pi_{\eta}}(\cdot)$ denotes the probability measure when $\theta_0\sim\pi_{\eta}$. By \eqref{CO1} for $\theta_0\sim\pi_{\eta}$, we have $\sup_{\R}\pi_{\eta}(V)\leq \frac{4C_3}{K_1}$, then together with Lemma 3.3 in \cite{Fan}, we have
\begin{align*}
    &\P\left(\left\lvert\sum_{i=0}^{k-1}\Arrowvert\sigma(\theta_i)^{\rm T}\nabla f_h(\theta_i)\Arrowvert_2^2-\sum_{i=0}^{k-1}\E\Arrowvert\sigma(\theta_i)^{\rm T}\nabla f_h(\theta_i)\Arrowvert_2^2\right\rvert\geq y\right)\\
    \leq& 2e^{-cy^2k^{-1}}+c_3\left(1+\eta^{-1}\right)\left(e^{-c_1\eta y}+\frac{e^{-c_2y}}{1-e^{-u_1\eta}}\right).
\end{align*}
This completes the proof of Proposition \ref{EY}.
\end{proof}

\begin{lem}\label{E_k}
Let $\Psi_1:\R^d\to\R^d$ and $\Psi_2:\R^{2d}\to\R$ be measurable functions. Denote $A=\{\left|\xi_i\right|\leq \mathbf{R},i=1,\cdots,m\}$, we have
\begin{align*}
&\E\exp\left\{\left(\sum_{k=0}^{m-1}\big(\left<\Psi_1(\theta_k),\sigma(\theta_k)\xi_{k+1}\right>+\Psi_2(\theta_k,\xi_{k+1})\big)\right)I_A\right\}\\
\leq&\left(\E\exp\left\{\left(\sum_{k=0}^{m-1}2\left(\left\Arrowvert\Psi_1(\theta_k)\right\Arrowvert_2^2\Arrowvert\sigma\Arrowvert_{\infty}^2+\Psi_2(\theta_k,\xi_{k+1})\right)\right)I_A\right\}\right)^{\frac{1}{2}}
\end{align*}
and
\begin{align*}
&\E_{\theta_0}\exp\left\{\left(\sum_{k=0}^{m-1}\big(\left<\Psi_1(\theta_k),\sigma(\theta_k)\xi_{k+1}\right>+\Psi_2(\theta_k,\xi_{k+1})\big)\right)I_A\right\}\\
\leq&\left(\E_{\theta_0}\exp\left\{\left(\sum_{k=0}^{m-1}2\left(\left\Arrowvert\Psi_1(\theta_k)\right\Arrowvert_2^2\Arrowvert\sigma\Arrowvert_{\infty}^2+\Psi_2(\theta_k,\xi_{k+1})\right)\right)I_A\right\}\right)^{\frac{1}{2}}.
\end{align*}
\end{lem}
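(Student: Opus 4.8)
The plan is to combine one Cauchy--Schwarz step, which decouples the linear Gaussian term from everything else, with an exponential-martingale estimate that absorbs the truncation indicator. Throughout put $\mathcal{F}_k=\sigma(\theta_0,\xi_1,\dots,\xi_k)$ and abbreviate
\[
S_X=\sum_{k=0}^{m-1}\langle \Psi_1(\theta_k),\sigma(\theta_k)\xi_{k+1}\rangle,\qquad S_Y=\sum_{k=0}^{m-1}\Psi_2(\theta_k,\xi_{k+1}),\qquad \Lambda=\sum_{k=0}^{m-1}\|\sigma(\theta_k)^{\rm T}\Psi_1(\theta_k)\|_2^2 .
\]
Since $\langle \Psi_1(\theta_k),\sigma(\theta_k)\xi_{k+1}\rangle=\langle \sigma(\theta_k)^{\rm T}\Psi_1(\theta_k),\xi_{k+1}\rangle$ is, conditionally on $\mathcal{F}_k$, a centred Gaussian with variance $\|\sigma(\theta_k)^{\rm T}\Psi_1(\theta_k)\|_2^2$, the identity $\E(e^{2Z})=e^{2\,{\rm Var}(Z)}$ for a centred Gaussian $Z$ shows that
\[
M_k:=\exp\Big\{2\sum_{j=0}^{k-1}\big(\langle \Psi_1(\theta_j),\sigma(\theta_j)\xi_{j+1}\rangle-\|\sigma(\theta_j)^{\rm T}\Psi_1(\theta_j)\|_2^2\big)\Big\}
\]
is an $(\mathcal{F}_k)$-martingale with $M_0=1$, so $\E M_m=\E_{\theta_0}M_m=1$. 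Also $\|\sigma(\theta_k)^{\rm T}\Psi_1(\theta_k)\|_2\le\|\Psi_1(\theta_k)\|_2\,\|\sigma\|_\infty$ since $\sigma$ is bounded ({\bf Assumption \ref{conditions}}).

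First I would split $(S_X+S_Y)I_A=(S_X-\Lambda)I_A+(\Lambda+S_Y)I_A$ and apply Cauchy--Schwarz:
\[
\E\exp\{(S_X+S_Y)I_A\}\le\big(\E\exp\{2(S_X-\Lambda)I_A\}\big)^{1/2}\big(\E\exp\{2(\Lambda+S_Y)I_A\}\big)^{1/2}.
\]
Because $\Lambda\le\sum_{k}\|\Psi_1(\theta_k)\|_2^2\|\sigma\|_\infty^2$ and $I_A\ge0$, the second factor is at most $\big(\E\exp\{\sum_{k=0}^{m-1}2(\|\Psi_1(\theta_k)\|_2^2\|\sigma\|_\infty^2+\Psi_2(\theta_k,\xi_{k+1}))I_A\}\big)^{1/2}$, i.e.\ exactly the right-hand side of the lemma, and the same reduction runs with $\E_{\theta_0}$. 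Hence it suffices to prove $\E\exp\{2(S_X-\Lambda)I_A\}\le1$, and the analogous bound with $\E_{\theta_0}$.

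On $A^c$ the exponent $2(S_X-\Lambda)I_A$ vanishes, so $\E\exp\{2(S_X-\Lambda)I_A\}=\P(A^c)+\E[I_AM_m]$, and it is enough to show $\E[I_AM_m]\le\P(A)$. Writing $I_A=\prod_{i=1}^m I_{\{|\xi_i|\le\mathbf{R}\}}$ and $M_m=\prod_{k=0}^{m-1}\exp\{2(\langle\Psi_1(\theta_k),\sigma(\theta_k)\xi_{k+1}\rangle-\|\sigma(\theta_k)^{\rm T}\Psi_1(\theta_k)\|_2^2)\}$, I would strip off the factor carrying $\xi_m$ by conditioning on $\mathcal{F}_{m-1}$, using the elementary Gaussian identity: for a standard Gaussian vector $\xi$ on $\R^d$ and any fixed $c\in\R^d$, the Cameron--Martin shift gives
\[
\E\big[e^{2\langle c,\xi\rangle-2\|c\|_2^2}I_{\{|\xi|\le\mathbf{R}\}}\big]=\P(|\xi+2c|\le\mathbf{R})\le\P(|\xi|\le\mathbf{R}),
\]
the last step being Anderson's inequality (a centred Euclidean ball carries the largest Gaussian mass among its translates; alternatively one reduces to $d=1$ by conditioning on the part of $\xi$ orthogonal to $c$). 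Applied with $c=\sigma(\theta_{m-1})^{\rm T}\Psi_1(\theta_{m-1})$ this bounds the inner conditional expectation by the constant $\P(|\xi_1|\le\mathbf{R})$; iterating over $k=m-1,\dots,0$ yields $\E[I_AM_m]\le\P(|\xi_1|\le\mathbf{R})^m=\P(A)$. The conditional statement follows verbatim, using that the $\xi_i$ are independent of $\theta_0$, that $\sigma(\theta_0)\subseteq\mathcal{F}_0$, and $\E_{\theta_0}M_m=1$.

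The main obstacle is this last inequality $\E[I_AM_m]\le\P(A)$: the crude estimate $\E[I_AM_m]\le\E M_m=1$ only gives $\E\exp\{2(S_X-\Lambda)I_A\}\le\P(A^c)+1$, which is too weak for the clean constant $1$ in the statement. The gain comes solely from the fact---made quantitative by the Cameron--Martin/Anderson step and exploited increment by increment---that restricting each $\xi_i$ to a centred ball does not increase the tilted exponential moment $\E[e^{2\langle c,\xi\rangle-2\|c\|_2^2}I_{\{|\xi|\le\mathbf{R}\}}]$. Everything else---the Cauchy--Schwarz split, the conditional-Gaussian computation, and the bound $\|\sigma(\theta_k)^{\rm T}\Psi_1(\theta_k)\|_2\le\|\Psi_1(\theta_k)\|_2\|\sigma\|_\infty$---is routine.
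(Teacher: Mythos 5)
Your proof is correct, and it follows the same overall strategy as the paper: a Cauchy--Schwarz split of the exponent into the compensated Gaussian part $2(S_X-\Lambda)I_A$ and the part $2(\Lambda+S_Y)I_A$, followed by the bound $\Lambda\le\sum_k\Arrowvert\Psi_1(\theta_k)\Arrowvert_2^2\Arrowvert\sigma\Arrowvert_\infty^2$ and an exponential-martingale argument for the first factor. The one place where you genuinely diverge is exactly the point you flag as the "main obstacle." The paper disposes of the first factor by asserting
\begin{align*}
\E\exp\Big\{\Big(\sum_{k=0}^{m-1}2\big(\langle\Psi_1(\theta_k),\sigma(\theta_k)\xi_{k+1}\rangle-\Arrowvert\sigma(\theta_k)^{\rm T}\Psi_1(\theta_k)\Arrowvert_2^2\big)\Big)I_A\Big\}
\le\E\exp\Big\{\sum_{k=0}^{m-1}2\big(\cdots\big)\Big\}=1,
\end{align*}
justifying only the final equality by successive conditioning. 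Since $\E e^{YI_A}=\E[e^YI_A]+\P(A^c)$ and $\E e^Y=\E[e^YI_A]+\E[e^YI_{A^c}]$, that asserted inequality is \emph{equivalent} to $\E[e^YI_A]\le\P(A)$, i.e.\ to your bound $\E[I_AM_m]\le\P(A)$; it does not follow from $\E e^Y=1$ alone, because $Y$ has negative compensator and one cannot conclude $\E[e^Y I_{A^c}]\ge\P(A^c)$ by inspection. Your Cameron--Martin computation $\E[e^{2\langle c,\xi\rangle-2\Arrowvert c\Arrowvert_2^2}I_{\{|\xi|\le\mathbf{R}\}}]=\P(|\xi+2c|\le\mathbf{R})$ combined with Anderson's inequality, iterated over $k=m-1,\dots,0$, supplies precisely the justification the paper omits, so your version is not merely an alternative but a repair of a glossed step; the rest of your argument (the martingale property of $M_k$, the pointwise bound on $\Lambda$ using $I_A\ge0$, and the identical conditional version under $\E_{\theta_0}$) matches the paper.
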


\begin{proof}
    By H\"older's inequality, we can get
\begin{align*}
&\E\exp\left\{\left(\sum_{k=0}^{m-1}\big(\left<\Psi_1(\theta_k),\sigma(\theta_k)\xi_{k+1}\right>+\Psi_2(\theta_k,\xi_{k+1})\big)\right)I_A\right\}\\
=&\E\exp\left\{\Bigg(\sum_{k=0}^{m-1}\big(\left<\Psi_1(\theta_k),\sigma(\theta_k)\xi_{k+1}\right>-\big\Arrowvert\sigma(\theta_k)^{\rm T}\Psi_1(\theta_k)\big\Arrowvert_2^2\right.\\&\left.\qquad\qquad\qquad\qquad\qquad\qquad\qquad\;+\big\Arrowvert\sigma(\theta_k)^{\rm T}\Psi_1(\theta_k)\big\Arrowvert_2^2+\Psi_2(\theta_k,\xi_{k+1})\big)\Bigg)I_A\right\}\\
\leq&\left(\E\exp\left\{\left(\sum_{k=0}^{m-1}2\big(\left<\Psi_1(\theta_k),\sigma(\theta_k)\xi_{k+1}\right>-\big\Arrowvert\sigma(\theta_k)^{\rm T}\Psi_1(\theta_k)\big\Arrowvert_2^2\big)\right)I_A\right\}\right)^{\frac{1}{2}}\\
&\cdot\left(\E\exp\left\{\left(\sum_{k=0}^{m-1}2\big(\big\Arrowvert\sigma(\theta_k)^{\rm T}\Psi_1(\theta_k)\big\Arrowvert_2^2+\Psi_2(\theta_k,\xi_{k+1})\big)\right)I_A\right\}\right)^{\frac{1}{2}}\\
\leq&\left(\E\exp\left\{\left(\sum_{k=0}^{m-1}2\big(\Arrowvert\sigma(\theta_k)^{\rm T}\Psi_1(\theta_k)\big\Arrowvert_2^2+\Psi_2(\theta_k,\xi_{k+1})\big)\right)I_A\right\}\right)^{\frac{1}{2}},
\end{align*}
where the last inequality is by a standard conditional argument as follows: let $\E_{m-1}$ be the conditional expectation $\E(\cdot|\theta_0,\xi_1,\cdots,\xi_{m-1})$,
since $\xi_{k+1}$ is gaussian distributed and independent of $\theta_k$, a straightforward calculation gives
\begin{align*}
&\E\exp\left\{\left(\sum_{k=0}^{m-1}2\big(\left<\Psi_1(\theta_k),\sigma(\theta_k)\xi_{k+1}\right>-\big\Arrowvert\sigma(\theta_k)^{\rm T}\Psi_1(\theta_k)\big\Arrowvert_2^2\big)\right)I_A\right\}\\
\leq&\E\exp\left\{\sum_{k=0}^{m-1}2\big(\left<\Psi_1(\theta_k),\sigma(\theta_k)\xi_{k+1}\right>-\big\Arrowvert\sigma(\theta_k)^{\rm T}\Psi_1(\theta_k)\big\Arrowvert_2^2\big)\right\}\\
=&\E\left(\exp\left\{\sum_{k=0}^{m-2}2\big(\left<\Psi_1(\theta_k),\sigma(\theta_k)\xi_{k+1}\right>-\big\Arrowvert\sigma(\theta_k)^{\rm T}\Psi_1(\theta_k)\big\Arrowvert_2^2\big)\right\}\right.\\&\left.\qquad\qquad\E_{m-1}\left(e^{2\left(\left<\Psi_1(\theta_{m-1}),\sigma(\theta_{m-1})\xi_m\right>-\left\Arrowvert\sigma(\theta_{m-1})^{\rm T}\Psi_1(\theta_{m-1})\right\Arrowvert_2^2\right)}\right)\right)\\
=&\E\left(\exp\left\{\sum_{k=0}^{m-2}2\big(\left<\Psi_1(\theta_k),\sigma(\theta_k)\xi_{k+1}\right>-\big\Arrowvert\sigma(\theta_k)^{\rm T}\Psi_1(\theta_k)\big\Arrowvert_2^2\big)\right\}\right).
\end{align*}
By taking the conditional expectations successively,
\begin{align*}
    \E\exp\left\{\sum_{k=0}^{m-1}2\big(\left<\Psi_1(\theta_k),\sigma(\theta_k)\xi_{k+1}\right>-\big\Arrowvert\sigma(\theta_k)^{\rm T}\Psi_1(\theta_k)\big\Arrowvert_2^2\big)\right\}=1.
\end{align*}
A similar calculation gives the second inequality. Thus the desired result is established.
\end{proof}

\begin{pro}\label{g}
Suppose that {\bf Assumption} {\bf\ref{conditions}} holds and denote $A=\{\left|\xi_i\right|\leq \mathbf{R},i=1,\cdots,m\}$. Then there exist positive constants $c$ and $c_{\cdot}$ depending on $L, K_1, K_2, \Arrowvert b(0)\Arrowvert_2$ and $\sigma$, whose values may vary from line to line, such that for any $a>0$ and $\mathbf{R}$ satisfying
\begin{align}\label{R}
    c_1\eta^{-a}<\mathbf{R}<c_2\Arrowvert\nabla\sigma\Arrowvert^{-1}_{\infty}\eta^{-\frac{1}{2}},
\end{align}
we have
$$
\E_{\theta_0}\left(\exp\left\{\left(c\eta\sum_{k=0}^{m-1}\Arrowvert b(\theta_k)\Arrowvert_2^2\right)I_A\right\}\right)\leq Ce^{\left(c_1\Arrowvert\theta_0\Arrowvert_2^2+c_2\Arrowvert\sigma\Arrowvert_{\infty}^2\eta^{-1}\left(1+\Arrowvert\nabla\sigma\Arrowvert_{\infty}^2\left(\mathbf{R}^2+1\right)\right)\right)},
$$
and for all $x>0$,
\begin{align}\label{gP0}
    \P_{\theta_0}\left(\eta\sum_{k=0}^{m-1}\Arrowvert b(\theta_k)\Arrowvert_2^2>x\right)\leq Ce^{\left(c_1\Arrowvert\theta_0\Arrowvert_2^2+c_2\Arrowvert\sigma\Arrowvert_{\infty}^2\eta^{-1}\left(1+\Arrowvert\nabla\sigma\Arrowvert_{\infty}^2\left(\mathbf{R}^2+1\right)\right)\right)}e^{-c_3x}+c_4e^{-\frac{\mathbf{R}^2}{2}}.
\end{align}
Moreover, if {\bf Assumption} {\bf\ref{conditions2}} holds, we can obtain
$$
\E\left(\exp\left\{\left(c\eta\sum_{k=0}^{m-1}\Arrowvert b(\theta_k)\Arrowvert_2^2\right)I_A\right\}\right)\leq c_1e^{c_2\Arrowvert\sigma\Arrowvert_{\infty}^2\eta^{-1}\left(1+\Arrowvert\nabla\sigma\Arrowvert_{\infty}^2\left(\mathbf{R}^2+1\right)\right)},
$$
and for all $x>0$,
\begin{align}\label{gP}
    \P\left(\eta\sum_{k=0}^{m-1}\Arrowvert b(\theta_k)\Arrowvert_2^2>x\right)\leq c_1e^{c_2\Arrowvert\sigma\Arrowvert_{\infty}^2\eta^{-1}\left(1+\Arrowvert\nabla\sigma\Arrowvert_{\infty}^2\left(\mathbf{R}^2+1\right)\right)}e^{-c_3x}+c_4e^{-\frac{\mathbf{R}^2}{2}}.
\end{align}
\end{pro}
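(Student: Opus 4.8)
The plan is to reduce everything to an exponential moment estimate for $\eta\sum_{k=0}^{m-1}\|\theta_k\|_2^2$ and then exploit the dissipativity bound \eqref{|x|^2} together with the decoupling Lemma \ref{E_k}. By the linear growth bound \eqref{|g(x)|^2}, $\|b(\theta_k)\|_2^2\le2L^2\|\theta_k\|_2^2+2\|b(0)\|_2^2$, whence $c\eta\sum_{k=0}^{m-1}\|b(\theta_k)\|_2^2\le2cL^2\eta\sum_{k=0}^{m-1}\|\theta_k\|_2^2+2c\|b(0)\|_2^2\eta^{-1}$ (using $\eta m=\eta^{-1}$); the last term, a fixed multiple of $\eta^{-1}$, is absorbed into the exponent of the claimed bound, so it suffices to control $\E_{\theta_0}\bigl[\exp\{c'\eta\sum_{k=0}^{m-1}\|\theta_k\|_2^2\}I_A\bigr]$ for a small $c'>0$.

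With $V(x)=1+\|x\|_2^2$, expand $V(\theta_{k+1})-V(\theta_k)=2\langle\theta_k,\triangle\theta_k\rangle+\|\triangle\theta_k\|_2^2$ with $\triangle\theta_k=\eta b(\theta_k)+\sqrt\eta\sigma(\theta_k)\xi_{k+1}+\tfrac12\eta\mathcal R(\theta_k,\xi_{k+1})$, use $2\eta\langle\theta_k,b(\theta_k)\rangle\le-K_1\eta\|\theta_k\|_2^2+c\eta$ from \eqref{|x|^2}, apply Young's inequality to the $\|\theta_k\|$-linear cross terms so that a small multiple of $\eta\|\theta_k\|_2^2$ is swallowed by the gain $-K_1\eta\|\theta_k\|_2^2$, and sum over $k$ (telescoping, $V\ge0$). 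This gives a decomposition
\begin{align*}
\tfrac{K_1}{2}\,\eta\sum_{k=0}^{m-1}\|\theta_k\|_2^2\ \le\ V(\theta_0)+\mathcal D_\eta+2\sqrt\eta\sum_{k=0}^{m-1}\langle\sigma(\theta_k)^{\rm T}\theta_k,\xi_{k+1}\rangle+\sum_{k=0}^{m-1}\Xi_k,
\end{align*}
where $\mathcal D_\eta$ collects the deterministic contributions from the conditional means — chiefly $\eta\sum_k\E_{\theta_k}\|\sigma(\theta_k)\xi_{k+1}\|_2^2\le d\|\sigma\|_\infty^2\eta^{-1}$ and $\tfrac14\eta^2\sum_k\E_{\theta_k}\|\mathcal R(\theta_k,\xi_{k+1})\|_2^2\lesssim\|\sigma\|_\infty^2\|\nabla\sigma\|_\infty^2$ — so that $\mathcal D_\eta=O(\eta^{-1})$ and the bookkeeping of $\mathcal D_\eta$ together with the (cruder) estimates on $A$ of the residual terms, controlled by \eqref{R}, reproduces the exponent in the statement; the middle sum is a Gaussian-linear martingale; and each $\Xi_k$ is a mean-zero martingale difference, namely the centred quadratic forms $\|\sigma(\theta_k)\xi_{k+1}\|_2^2-\E_{\theta_k}(\cdot)$ and $\|\mathcal R\|_2^2-\E_{\theta_k}(\cdot)$, the term $\langle\sigma(\theta_k)\xi_{k+1},\mathcal R\rangle$, and $\langle\theta_k,\mathcal R\rangle$ (centred since $\E_{\theta_k}\mathcal R=0$). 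The self-referential residue $2L^2\eta\cdot\eta\sum_k\|\theta_k\|_2^2$ arising en route is moved to the left side for $\eta$ small.

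It then remains to bound $\E_{\theta_0}\bigl[\exp\{\bigl(c''\sqrt\eta\sum_k\langle\sigma(\theta_k)^{\rm T}\theta_k,\xi_{k+1}\rangle+c''\sum_k\Xi_k\bigr)I_A\}\bigr]$. For the Gaussian-linear part I apply Lemma \ref{E_k} with $\Psi_1(\theta_k)\propto\sqrt\eta\,\theta_k$, which trades it for $2c''^2\eta\|\theta_k\|_2^2\|\sigma\|_\infty^2$ at the cost of a square root; choosing $c$ (hence $c''$) so small that $2c''^2\|\sigma\|_\infty^2\le c'/2$ closes the loop and leaves the target quantity raised to the power $1/2$. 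For the martingale differences $\Xi_k$ I bound $\E_{\theta_k}[e^{\mu\Xi_k}]$ componentwise using that each is a low-degree polynomial in the Gaussian $\xi_{k+1}$, with the near-origin estimate $e^{C\mu^2\mathrm{Var}_{\theta_k}(\Xi_k)}$; since $\eta^2m=1$, the product over $k$ of these factors stays $O(1)$ provided the relevant smallness thresholds on $\mu$ hold, which for the $\langle\theta_k,\mathcal R\rangle$ term needs the a priori estimate $\|\theta_k\|_2\lesssim\sqrt{V(\theta_0)}+\|\sigma\|_\infty\mathbf{R}\,\eta^{-1/2}$ valid on $A$ — obtained by iterating the one-step Lyapunov inequality, whose increments are bounded on $A$ — together with the upper bound $\mathbf{R}<c_2\|\nabla\sigma\|_\infty^{-1}\eta^{-1/2}$ in \eqref{R}, which makes $\sqrt\eta\,\|\nabla\sigma\|_\infty\mathbf{R}<c_2$ uniformly. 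Assembling these estimates yields the conditional exponential moment bound, with $e^{c_1\|\theta_0\|_2^2}$ coming from $V(\theta_0)$; integrating over $\theta_0$ and invoking Assumption \ref{conditions2} (legitimate once $c_1\le\gamma$, arranged by taking $c$ small) gives the unconditional version. The tail bounds \eqref{gP0}--\eqref{gP} then follow by Markov's inequality on $A$ together with the union bound $\P(A^c)\le m\,\P(|\xi_1|>\mathbf{R})$, the factor $m=\eta^{-2}$ being absorbed by the lower bound $\mathbf{R}>c_1\eta^{-a}$ in \eqref{R} to leave $c_4e^{-\mathbf{R}^2/2}$.

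The main obstacle is the exponential control of the error terms $\Xi_k$: estimating the quadratic- and cubic-in-$\xi$ pieces by their maxima on $A$ and summing over $m=\eta^{-2}$ steps is far too lossy (it generates powers of $\eta^{-1}$ exceeding $\eta^{-1}$), so one must simultaneously use that every $\Xi_k$ has zero conditional mean — routing all genuinely $O(\eta^{-1})$-sized contributions into $\mathcal D_\eta$ — use the smallness of $\eta$ to make the relevant Gaussian-polynomial moment generating functions applicable, and use the event $A$ together with both sides of \eqref{R} to obtain a merely polynomial (rather than exponential) a priori control of $\|\theta_k\|_2$ on $A$, all while keeping the rate $c$ independent of $\mathbf{R}$ and $\eta$.
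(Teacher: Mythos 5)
Your proposal is correct and follows the same architecture as the paper's proof: telescoping the Lyapunov identity for $\|\theta_{k+1}\|_2^2-\|\theta_k\|_2^2$ to express $-\eta\sum_k\langle\theta_k,b(\theta_k)\rangle$ (equivalently $\eta\sum_k\|\theta_k\|_2^2$) in terms of $\|\theta_0\|_2^2$ plus noise terms, decoupling the Gaussian-linear martingale via Lemma \ref{E_k} with $\Psi_1\propto\sqrt{\eta}\,\theta_k$ and closing the resulting self-referential inequality by the fractional-power trick, using the truncation on $A$ together with $\|\xi\|_2^4\leq\mathbf{R}^2(\|\xi\|_2^2+1)$ and the upper bound in \eqref{R} to make the quartic-in-$\xi$ exponential moment finite, and finally bounding $\P(A^c)$ by the chi-square tail with the factor $m=\eta^{-2}$ absorbed via $\mathbf{R}>c_1\eta^{-a}$.

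The one place where you diverge is the treatment of the quadratic and cross terms ($\|\sigma(\theta_k)\xi_{k+1}\|_2^2$, $\|\mathcal{R}(\theta_k,\xi_{k+1})\|_2^2$, $\langle\theta_k,\mathcal{R}(\theta_k,\xi_{k+1})\rangle$, etc.): you centre them into martingale differences $\Xi_k$, route the conditional means into a deterministic $O(\eta^{-1})$ budget, and control $\E_{\theta_k}e^{\mu\Xi_k}$ by variance-type MGF bounds, which forces you to establish an a priori pathwise bound $\|\theta_k\|_2\lesssim\sqrt{V(\theta_0)}+\|\sigma\|_\infty\mathbf{R}\eta^{-1/2}$ on $A$ to keep $\mu$ below the MGF threshold for the $\langle\theta_k,\mathcal{R}\rangle$ term. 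The paper avoids all of this: it applies Young's inequality to the cross terms so that the $\|\theta_k\|_2^2$ contributions are reabsorbed into the self-referential term, then separates the remaining $\eta\sum_k\|\sigma(\theta_k)\xi_{k+1}\|_2^2$ and $\eta\sum_k\|\mathcal{R}(\theta_k,\xi_{k+1})\|_2^2I_{\{|\xi_{k+1}|\leq\mathbf{R}\}}$ blocks by H\"older and computes their uncentred Gaussian exponential moments in closed form, e.g.\ $(1-2c\eta\|\sigma\|_\infty^2)^{-md/2}\leq e^{c\|\sigma\|_\infty^2\eta^{-1}}$; this is where the exponent $\eta^{-1}(1+\|\nabla\sigma\|_\infty^2(\mathbf{R}^2+1))$ in the statement comes from directly. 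Both routes land on the same bound; the paper's is shorter because accepting the uncentred $O(\eta^{-1})$ exponent makes the centering and the a priori control of $\|\theta_k\|_2$ unnecessary.
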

\begin{proof}
    See Appendix \ref{Appendix}.
\end{proof}

In order to estimate the tail probability of $\mathcal{R}_{\eta}$, we also need Lemma 3.4 in Fan, Hu and Xu \cite{Fan}.
\begin{pro}\label{martingale differences}
Let $(\zeta_i,\mathcal{F}_i)_{i\geq1}$ be a sequence of martingale differences. Assume there exist positive constants $c$ and $\alpha\in(0,1]$ such that
$$
1\leq u_n:=\sum_{i=1}^{n}\big\Arrowvert\E\big(\zeta_i^2\exp\{c\lvert\zeta_i\rvert^{\alpha}\}|\mathcal{F}_{i-1}\big)\big\Arrowvert_{\infty}<\infty.
$$
Then there exists a positive constant $c_{\alpha}$ such that for all $x>0$,
$$
\P\left(\sum_{i=1}^{n}\zeta_i\geq x\right)\leq c_{\alpha}\exp\left\{-\frac{x^2}{c_{\alpha}\big(u_n+x^{2-\alpha}\big)}\right\}.
$$
\end{pro}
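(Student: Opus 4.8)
The natural route is the exponential Chebyshev inequality for martingales: for any $\lambda>0$, $\P\big(\sum_{i=1}^n\zeta_i\ge x\big)\le e^{-\lambda x}\,\E\exp\big\{\lambda\sum_{i=1}^n\zeta_i\big\}$, and peeling off the increments one at a time (taking successive conditional expectations) reduces the problem to a bound on the conditional Laplace transform $\E(e^{\lambda\zeta_i}\mid\mathcal F_{i-1})$. From $\E(\zeta_i\mid\mathcal F_{i-1})=0$ together with the elementary inequality $e^y-1-y\le\tfrac{y^2}{2}e^{|y|}$ one gets $\log\E(e^{\lambda\zeta_i}\mid\mathcal F_{i-1})\le\tfrac{\lambda^2}{2}\E(\zeta_i^2e^{\lambda|\zeta_i|}\mid\mathcal F_{i-1})$. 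The only obstruction is that $e^{\lambda|\zeta_i|}$ is not controlled by the hypothesis weight $e^{c|\zeta_i|^\alpha}$ on large values when $\alpha<1$: the bound $\lambda t\le ct^\alpha$ holds exactly on $0\le t\le w:=(c/\lambda)^{1/(1-\alpha)}$, so I would truncate $\zeta_i$ at that level. (For $\alpha=1$ no truncation is needed: $e^{\lambda|\zeta_i|}\le e^{c|\zeta_i|^\alpha}$ already whenever $\lambda\le c$, and the argument below simplifies accordingly.)

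Set $w=w(\lambda)$, write $A=\{\max_{1\le i\le n}|\zeta_i|\le w\}$, and introduce the truncated martingale difference $\bar\zeta_i=\zeta_i\mathbf{1}_{\{\max_{j\le i}|\zeta_j|\le w\}}-\E(\zeta_i\mathbf{1}_{\{\max_{j\le i}|\zeta_j|\le w\}}\mid\mathcal F_{i-1})$. Using $|t|\mathbf{1}_{\{|t|>w\}}\le t^2e^{ct^\alpha}/(we^{cw^\alpha})$, the martingale property yields the deterministic estimate $\sum_i\E(|\zeta_i|\mathbf{1}_{\{|\zeta_i|>w\}}\mid\mathcal F_{i-1})\le u_n/(we^{cw^\alpha})$, and the same idea with the conditional Markov inequality against $t^2e^{ct^\alpha}$ gives $\P(A^c)\le\sum_i\P(|\zeta_i|>w)\le u_n/(w^2e^{cw^\alpha})$. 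Since on $A$ the sums $\sum_i\zeta_i$ and $\sum_i\bar\zeta_i$ differ by at most $u_n/(we^{cw^\alpha})$, we get $\P\big(\sum_i\zeta_i\ge x\big)\le\P\!\big(\sum_i\bar\zeta_i\ge x-u_n/(we^{cw^\alpha})\big)+u_n/(w^2e^{cw^\alpha})$. For the first probability, a direct computation of $\E(e^{\lambda\bar\zeta_i}\mid\mathcal F_{i-1})$ — splitting on whether $\max_{j<i}|\zeta_j|\le w$, using that on the truncated range $e^{\lambda|\zeta_i|}\le e^{c|\zeta_i|^\alpha}$, and absorbing the $\mathcal F_{i-1}$-measurable centering (which equals $\mathbf{1}_{\{\max_{j<i}|\zeta_j|\le w\}}\,\E(\zeta_i\mathbf{1}_{\{|\zeta_i|>w\}}\mid\mathcal F_{i-1})$, hence is bounded by $V_i/(we^{cw^\alpha})$ with $V_i:=\|\E(\zeta_i^2e^{c|\zeta_i|^\alpha}\mid\mathcal F_{i-1})\|_\infty$) — gives $\E(e^{\lambda\bar\zeta_i}\mid\mathcal F_{i-1})\le\exp\{\tfrac{\lambda^2}{2}V_i+\tfrac{2\lambda V_i}{we^{cw^\alpha}}\}$. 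Multiplying over $i$ and using $\sum_iV_i=u_n$ gives $\E\exp\{\lambda\sum_i\bar\zeta_i\}\le\exp\{\tfrac{\lambda^2}{2}u_n+\tfrac{2\lambda u_n}{we^{cw^\alpha}}\}$, whence $\P\big(\sum_i\zeta_i\ge x\big)\le\exp\{-\tfrac{\lambda x}{4}+\tfrac{\lambda^2u_n}{2}\}+u_n/(w^2e^{cw^\alpha})$ provided $u_n/(we^{cw^\alpha})\le x/8$.

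It remains to choose $\lambda$, and here the two regimes appear. Since $\tfrac1{u_n+x^{2-\alpha}}\asymp\min(\tfrac1{u_n},\tfrac1{x^{2-\alpha}})$, it suffices to produce, up to constants depending only on $\alpha$ and $c$, the factor $e^{-x^2/u_n}$ when $x^{2-\alpha}\le u_n$ and the factor $e^{-x^\alpha}$ when $x^{2-\alpha}>u_n$. In the first case I would take $\lambda\asymp x/u_n$; then $w=w(\lambda)\gtrsim x$, the two $e^{-cw^\alpha}$-remainders are of smaller order, and $\exp\{-\tfrac{\lambda x}{4}+\tfrac{\lambda^2u_n}{2}\}\asymp e^{-cx^2/u_n}$. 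In the second case I would take $\lambda=c/(\delta x)^{1-\alpha}$ with $\delta=\delta(\alpha,c)$ chosen so that $\lambda^2u_n\le\tfrac14\lambda x$ (possible because $u_n<x^{2-\alpha}$), which yields $\exp\{-\tfrac{\lambda x}{4}+\tfrac{\lambda^2u_n}{2}\}\le e^{-cx^\alpha}$ while again $w=\delta x$ keeps the remainders of lower order. Before doing any of this I would dispose of the trivial range: if $u_n\ge x^2$, or if $x$ lies below a constant depending only on $\alpha$ and $c$, the asserted bound holds automatically on taking $c_\alpha$ large (its right-hand side is then $\ge1\ge\P(\cdot)$), so one may assume $u_n<x^2$ and $x$ large — which is precisely what guarantees $w(\lambda)\gtrsim x$ and makes the remainders negligible in both regimes.

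The main obstacle is exactly this three-way balance of $w$ and $\lambda$: one needs $w\le(c/\lambda)^{1/(1-\alpha)}$ for the conditional Laplace estimate to close; one needs $w\gtrsim x$ for the union-type remainder $u_n/(w^2e^{cw^\alpha})$ to be dominated by the target (it is this tension that forces a $\lambda$-dependent truncation level and makes the two regimes genuinely appear — a single fixed truncation level fails in the sub-Weibull regime); and one still needs $\lambda$ large enough that $\lambda x$ beats $\lambda^2u_n$ in the exponent. A subsidiary technical point is that the centering corrections must be controlled through the deterministic bound $\sum_i\E(|\zeta_i|\mathbf{1}_{\{|\zeta_i|>w\}}\mid\mathcal F_{i-1})\le u_n/(we^{cw^\alpha})$ rather than merely in probability. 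Finally, I note that this statement is Lemma 3.4 of Fan, Hu and Xu \cite{Fan}, so within the paper one may simply invoke it; the above merely sketches a self-contained derivation.
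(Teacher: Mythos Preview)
Your proposal is correct, and you have already identified the key point at the end: the paper does not prove this proposition at all --- it is stated as Lemma 3.4 of Fan, Hu and Xu \cite{Fan} and simply invoked without proof. So the paper's ``proof'' is a one-line citation, and your final sentence is exactly what the paper does.

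Your self-contained sketch via exponential Chebyshev plus a $\lambda$-dependent truncation at level $w=(c/\lambda)^{1/(1-\alpha)}$ is the standard route to such sub-Weibull Bernstein-type inequalities (in the spirit of Fuk--Nagaev / Lesigne--Voln\'y arguments adapted to martingale differences), and the two-regime optimisation you outline is the right way to land on the stated bound. This is genuinely more than the paper provides, since the paper offers no argument whatsoever. The only caveat is that your sketch is heuristic in places (the handling of the recentering constants and the precise verification that the remainder $u_n/(w^2e^{cw^\alpha})$ is dominated by the target in both regimes would need a few explicit lines), but as a derivation outline it is sound and would go through.
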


Then, we can establish the following deviation inequality for the martingale difference $\big(\psi_i,\mathcal{F}_i\big)_{i\geq1}$, where $\psi_{i+1}=\big\Arrowvert\big(\sigma(\theta_i)\xi_{i+1}\big)^{\rm T}\nabla f_h(\theta_i)\big\Arrowvert_2^2-\big\Arrowvert\sigma(\theta_i)^{\rm T}\nabla f_h(\theta_i)\big\Arrowvert_2^2$ and $\mathcal{F}_i=\sigma\big(\theta_0,\xi_k,1\leq k\leq i\big)$.
\begin{pro}\label{VY}
Under {\bf Assumption} {\bf\ref{conditions}}, it holds for all $y>0$,
\begin{align*}
&\P\left(\left\lvert\sum_{i=0}^{[\eta^{-2}]-1}\Big(\eta^{-1}\big\Arrowvert\big(\theta_{i+1}-\theta_i-\eta b(\theta_i)-\frac{1}{2}\eta\mathcal{R}(\theta_{i},\xi_{i+1})\big)^{\rm T}\nabla f_{h}(\theta_i)\big\Arrowvert_2^2-\big\Arrowvert\sigma(\theta_i)^{\rm T}\nabla f_{h}(\theta_i)\big\Arrowvert_2^2\Big)\right\rvert>y\right)\\
\leq& c_1\exp\left\{-\frac{y^2}{c_1\big(\eta^{-2}+cy\big)}\right\},
\end{align*}
where $c_1$ and $c$ depend on $b$ and $\sigma$.
\end{pro}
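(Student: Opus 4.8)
The plan is to recognize the $i$-th summand as a bounded martingale difference and to apply Proposition \ref{martingale differences} with $\alpha=1$. By the Milstein recursion \eqref{Milstein} one has $\theta_{i+1}-\theta_i-\eta b(\theta_i)-\frac{1}{2}\eta\mathcal{R}(\theta_i,\xi_{i+1})=\sqrt{\eta}\,\sigma(\theta_i)\xi_{i+1}$, so the $i$-th summand equals
\begin{align*}
\psi_{i+1}:=\big\Arrowvert\big(\sigma(\theta_i)\xi_{i+1}\big)^{\rm T}\nabla f_h(\theta_i)\big\Arrowvert_2^2-\big\Arrowvert\sigma(\theta_i)^{\rm T}\nabla f_h(\theta_i)\big\Arrowvert_2^2=\langle\xi_{i+1},v_i\rangle^2-\Arrowvert v_i\Arrowvert_2^2,
\end{align*}
where $v_i:=\sigma(\theta_i)^{\rm T}\nabla f_h(\theta_i)$ and $\mathcal{F}_i:=\sigma(\theta_0,\xi_1,\dots,\xi_i)$. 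Since $\xi_{i+1}$ is a standard normal vector independent of $\mathcal{F}_i$, we have $\E(\langle\xi_{i+1},v_i\rangle^2\mid\mathcal{F}_i)=\Arrowvert v_i\Arrowvert_2^2$, so $(\psi_j,\mathcal{F}_j)_{j\geq1}$ is a martingale difference sequence and, writing $m=[\eta^{-2}]$, the sum inside the probability equals $\sum_{j=1}^m\psi_j$. It therefore suffices to estimate $\P(\sum_{j=1}^m\psi_j\geq y)$ and $\P(-\sum_{j=1}^m\psi_j\geq y)$ and add the two bounds.

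To verify the hypothesis of Proposition \ref{martingale differences}, I would first note that {\bf Assumption \ref{conditions}} (boundedness of $\sigma$) together with the regularity estimate \eqref{boundedness} yields $\Arrowvert v_i\Arrowvert_2\leq\Arrowvert\sigma\Arrowvert_\infty\Arrowvert\nabla f_h\Arrowvert_\infty\leq M$ for a constant $M=M(b,\sigma)$. Conditionally on $\mathcal{F}_i$ we may write $\langle\xi_{i+1},v_i\rangle=\Arrowvert v_i\Arrowvert_2 Z$ with $Z\sim N(0,1)$, so that $\psi_{i+1}=\Arrowvert v_i\Arrowvert_2^2(Z^2-1)$ and $|\psi_{i+1}|\leq M^2(Z^2+1)$; hence, fixing $\kappa\in(0,1/(2M^2))$,
\begin{align*}
\E\big(\psi_{i+1}^2\exp\{\kappa|\psi_{i+1}|\}\mid\mathcal{F}_i\big)\leq M^4 e^{\kappa M^2}\,\E\big[(Z^2+1)^2 e^{\kappa M^2 Z^2}\big]=:C_0<\infty,
\end{align*}
uniformly in $i$ (the right-hand expectation being finite precisely because $\kappa M^2<1/2$). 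Consequently $u_m:=\sum_{j=1}^m\big\Arrowvert\E(\psi_j^2 e^{\kappa|\psi_j|}\mid\mathcal{F}_{j-1})\big\Arrowvert_\infty\leq C_0 m\leq C_0\eta^{-2}$, and $u_m\geq1$ for all sufficiently small $\eta$ unless $\nabla f_h\equiv0$, in which case $h$ is constant by the Stein equation \eqref{Stein} and the statement is trivial.

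Finally, applying Proposition \ref{martingale differences} with $\alpha=1$ and $n=m$ to the martingale differences $(\psi_j)$, and then to $(-\psi_j)$, gives
\begin{align*}
\P\Big(\pm\sum_{j=1}^m\psi_j\geq y\Big)\leq c_\alpha\exp\Big\{-\frac{y^2}{c_\alpha(u_m+y)}\Big\}\leq c_\alpha\exp\Big\{-\frac{y^2}{c_\alpha(C_0\eta^{-2}+y)}\Big\};
\end{align*}
relabelling the constants so that $c_1(\eta^{-2}+cy)\geq c_\alpha(C_0\eta^{-2}+y)$ for all $y>0$ and adding the two estimates then yields the asserted inequality, with $c_1$ and $c$ depending on $b$ and $\sigma$ through $M$ and $C_0$. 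The main obstacle is the uniform sub-exponential moment estimate of the middle paragraph: everything rests on the conditional variances $\Arrowvert v_i\Arrowvert_2^2$ being bounded \emph{strictly} below $1/(2\kappa)$, which keeps the $\chi^2$-type exponential moments $\E[(Z^2+1)^2 e^{\kappa M^2 Z^2}]$ finite and $i$-independent; this is exactly where the boundedness of $\sigma$ and of $\nabla f_h$ enters. The remaining steps are the routine Bernstein-type bound of Proposition \ref{martingale differences} and bookkeeping of constants.
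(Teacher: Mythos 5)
Your proposal is correct and follows essentially the same route as the paper: identify the summands as the martingale differences $\psi_{i+1}=\Arrowvert(\sigma(\theta_i)\xi_{i+1})^{\rm T}\nabla f_h(\theta_i)\Arrowvert_2^2-\Arrowvert\sigma(\theta_i)^{\rm T}\nabla f_h(\theta_i)\Arrowvert_2^2$ via the cancellation in the Milstein recursion, verify the conditional sub-exponential moment using the boundedness of $\sigma$ and $\nabla f_h$, and apply Proposition \ref{martingale differences} with $\alpha=1$. Your verification of the exponential-moment condition (reducing to a one-dimensional $\chi^2$-type integral with $\kappa M^2<1/2$) is in fact more explicit than the paper's, which merely asserts finiteness after the bound $|\psi_{k+1}|\leq c(1+\Arrowvert\xi_{k+1}\Arrowvert_2^2)$.
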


\begin{proof}
By \eqref{boundedness}, we deduce that
\begin{align*}
\big\lvert\psi_{k+1}\big\rvert
&\leq\Arrowvert\nabla f_h(\theta_k)\Arrowvert_2^2\cdot\Arrowvert(\sigma(\theta_k)\xi_{k+1})(\sigma(\theta_k)\xi_{k+1})^{\rm T}-\sigma(\theta_k)\sigma(\theta_k)^{\rm T}\Arrowvert\\
&\leq c\big(1+\Arrowvert\xi_{k+1}\Arrowvert_2^2\big),
\end{align*}
where $c=\Arrowvert\nabla f_h(\theta_k)\Arrowvert_2^2\Arrowvert\sigma(\theta_k)\Arrowvert^2$. Then, there exists some positive constant $c$ such that
$$
\E\big(\psi_{k+1}^2\exp{\big\{c\lvert \psi_{k+1}\rvert\big\}}\big\lvert\mathcal{F}_k\big)<\infty.
$$
Therefore, by \eqref{Milstein} and Proposition \ref{martingale differences} with $\alpha=1$, we have for all $y>0$,
\begin{align*}
&\P\left(\Bigg\lvert\sum_{i=0}^{[\eta^{-2}]-1}\Big(\eta^{-1}\big\Arrowvert\big(\theta_{i+1}-\theta_i-\eta b(\theta_i)-\frac{1}{2}\eta\mathcal{R}(\theta_i,\xi_{i+1})\big)^{\rm T}\nabla f_h(\theta_i)\big\Arrowvert_2^2-\big\Arrowvert\sigma(\theta_i)^{\rm T}\nabla f_h(\theta_i)\big\Arrowvert_2^2\Big)\Bigg\rvert>y\right)\\
=&\P\left(\Bigg\lvert\sum_{i=0}^{[\eta^{-2}]-1}\Big(\big\Arrowvert\big(\sigma(\theta_i)\xi_{i+1}\big)^{\rm T}\nabla f_h(\theta_i)\big\Arrowvert_2^2-\big\Arrowvert\sigma(\theta_i)^{\rm T}\nabla f_h(\theta_i)\big\Arrowvert_2^2\Big)\Bigg\rvert>y\right)\\
\leq&\P\left(\Bigg\lvert\sum_{i=0}^{[\eta^{-2}]-1}\psi_{i+1}\Bigg\rvert>y\right)
\leq c\exp\left\{-\frac{y^2}{c_1\big(\eta^{-2}+cy\big)}\right\},
\end{align*}
which completes the proof of Proposition \ref{VY}.
\end{proof}

Now, we present the deviation inequality for $\mathcal{R}_{\eta}$, whose detailed proof is deferred to the Appendix \ref{AppendixB}.
\begin{pro}\label{remainder}
    Suppose that {\bf Assumptions} {\bf\ref{conditions}} and {\bf\ref{conditions2}} hold and for any $a>0$, we choose $c_1\eta^{-a}<\mathbf{R}<c\Arrowvert\nabla\sigma\Arrowvert_{\infty}^{-1}\eta^{-1/3}$. Then for all
    $$\max\big\{c\eta^{1/2},\eta^{1/2}\mathbf{R}^3\Arrowvert\nabla\sigma\Arrowvert_{\infty}^3\big\}<y=o\big(\eta^{-1/2}\big),$$
    we have
    \begin{align}\nonumber
    \P\Big(\big\lvert\mathcal{R}_\eta\big\rvert\geq y\Big)\leq c_1\left(\exp\left\{-\frac{y^2\eta^{-1}}{c_2(1+\eta^{1/2}y)}\right\}+\exp\left\{-\frac{\mathbf{R}^2}{2}\right\}+\exp\left\{-c_3y^{1/3}\eta^{-7/6}\right\}\right).
    \end{align}
\end{pro}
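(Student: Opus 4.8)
The plan is to estimate $\P(|\mathcal R_\eta|\ge y)$ by treating the thirteen summands $\mathcal R_{\eta,1},\dots,\mathcal R_{\eta,13}$ separately: by the union bound it suffices to bound each $\P(|\mathcal R_{\eta,i}|\ge y/13)$ by one of the three terms on the right-hand side (some $\mathcal R_{\eta,i}$ split further into sub-terms that go into different mechanisms below). For every piece I would first peel off the truncation event $A=\{\|\xi_j\|_2\le\mathbf{R},\ 1\le j\le m\}$ used in Lemma \ref{E_k} and Proposition \ref{g}, writing $\P(|\mathcal R_{\eta,i}|\ge y/13)\le\P(\{|\mathcal R_{\eta,i}|\ge y/13\}\cap A)+\P(A^c)$; since $\P(A^c)\le\sum_j\P(\|\xi_j\|_2>\mathbf{R})$ is bounded by $e^{-\mathbf{R}^2/2}$ up to a polynomial-in-$(\eta^{-1},\mathbf{R})$ factor, which is harmless because $\mathbf{R}>c_1\eta^{-a}$, this produces the second term in the asserted bound, and it remains to estimate $\P(\{|\mathcal R_{\eta,i}|\ge y/13\}\cap A)$.

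For the latter I would distinguish three mechanisms. (i) The telescoping term $\mathcal R_{\eta,1}=\sqrt\eta\,(f_h(\theta_0)-f_h(\theta_m))$ is deterministically $\le c\sqrt\eta$ by \eqref{boundedness}, hence is identically below $y/13$ once the constant $c$ in the hypothesis $y>c\eta^{1/2}$ is large enough. (ii) The "genuine martingale" terms $\mathcal R_{\eta,2},\mathcal R_{\eta,7},\mathcal R_{\eta,9}$, together with the fluctuation parts of the Taylor-remainder integral terms among $\mathcal R_{\eta,4},\mathcal R_{\eta,5},\mathcal R_{\eta,6},\mathcal R_{\eta,10},\dots,\mathcal R_{\eta,13}$, are — after pulling out a scalar prefactor $\eta^{p_i}$ — sums $\sum_{k=0}^{m-1}\zeta_{k+1}^{(i)}$ of martingale differences with respect to $\mathcal F_k=\sigma(\theta_0,\xi_1,\dots,\xi_k)$: the conditional-mean-zero property follows from $\E[\xi_{k+1}\xi_{k+1}^{\rm T}\mid\mathcal F_k]=I_d$, the vanishing of odd Gaussian moments, the centring built into $\mathcal R(\theta_k,\xi_{k+1})$ (Remark \ref{Re}), and, for the Taylor terms, the explicit subtraction of the $\mathcal F_k$-conditional mean, whose telescoped contribution is deterministic of order $\le\eta^{1/2}$ (the gain coming from the mean-value step), again absorbed by $y>c\eta^{1/2}$. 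By \eqref{boundedness} and boundedness of $\sigma$, the increments satisfy $|\zeta_{k+1}^{(i)}|\le c\,(1+\|\xi_{k+1}\|_2^{q_i})$ for some $q_i\in\{1,2,4,6\}$ (carrying an extra $\|\nabla\sigma\|_\infty$-power when $\mathcal R$ is present), hence $\E\exp\{c|\zeta_{k+1}^{(i)}|^{\beta_i}\}<\infty$ with $\beta_i:=(2/q_i)\wedge 1\in(0,1]$, and Proposition \ref{martingale differences} with $\alpha=\beta_i$ and variance proxy $u_n\le cm=c\eta^{-2}$ gives $\P(|\mathcal R_{\eta,i}|\ge y/13)\le c\exp\{-(y\eta^{-p_i})^2/(c(\eta^{-2}+(y\eta^{-p_i})^{2-\beta_i}))\}$. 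For $\mathcal R_{\eta,2}$ (here $p_i=3/2$, $\beta_i=1$) this reduces, for $c\eta^{1/2}<y=o(\eta^{-1/2})$, to $\exp\{-y^2\eta^{-1}/(c_2(1+\eta^{1/2}y))\}$ — exactly the leading term; the higher-order / heavier-tailed pieces give, in the same window, bounds dominated by the leading term or by $\exp\{-c_3 y^{1/3}\eta^{-7/6}\}$, the slowest rate, supplied by the $\eta^{7/2}\sum_k(\cdot)$-pieces with $\|\xi_{k+1}\|_2^6$-type increments inside $\mathcal R_{\eta,10}$ and $\mathcal R_{\eta,11}$.

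(iii) The pieces carrying the unbounded factor $b(\theta_k)$ — $\mathcal R_{\eta,3},\mathcal R_{\eta,5},\mathcal R_{\eta,6},\mathcal R_{\eta,8},\mathcal R_{\eta,11},\mathcal R_{\eta,12}$ — I would bound on $A$ by an exponential Chebyshev inequality followed by Lemma \ref{E_k} (to decouple the Gaussian factor $\sigma(\theta_k)\xi_{k+1}$, whose conditional exponential moment is $1$, from $\|b(\theta_k)\|_2^2$) and Proposition \ref{g} (to bound $\E\exp\{c\eta\sum_k\|b(\theta_k)\|_2^2 I_A\}$), using in addition the moment bound $\E\|\theta_k\|_2^4\le C(1+\|x_0\|_2^4)$ of Lemma \ref{ergo}; the outcome has the shape $c_1\exp\{c_2\|\sigma\|_\infty^2\eta^{-1}(1+\|\nabla\sigma\|_\infty^2(\mathbf{R}^2+1))-c_3 y\eta^{-p_i}\}+c_4 e^{-\mathbf{R}^2/2}$, and the hypotheses $y>\eta^{1/2}\mathbf{R}^3\|\nabla\sigma\|_\infty^3$ and $\mathbf{R}<c\|\nabla\sigma\|_\infty^{-1}\eta^{-1/3}$ are exactly what makes the first exponential negligible against $e^{-c_3 y\eta^{-p_i}}$.

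The hard part is not any single estimate but the bookkeeping: one must, for each of the thirteen $\mathcal R_{\eta,i}$, pin down the correct prefactor exponent $p_i$ and increment index $\beta_i$ (or the appropriate $\|b\|_2$/$\|\nabla\sigma\|_\infty$/$\mathbf{R}$ budget), read off the size of the increments from only the already-available estimates (\eqref{boundedness}, boundedness of $\sigma$, $\|b(x)\|_2^2\le 2L^2\|x\|_2^2+2\|b(0)\|_2^2$, Lemma \ref{ergo}, Proposition \ref{g}), and then verify that each resulting tail $\exp\{-c\,y^{\beta_i}\eta^{-p_i\beta_i}\}$ (or its $b$-counterpart) is, uniformly over $\max\{c\eta^{1/2},\eta^{1/2}\mathbf{R}^3\|\nabla\sigma\|_\infty^3\}<y=o(\eta^{-1/2})$, at most one of the three stated terms. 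In particular one checks that the cutoff $y>c\eta^{1/2}$ (with $c$ large) absorbs all deterministic mean-parts, that $y>\eta^{1/2}\mathbf{R}^3\|\nabla\sigma\|_\infty^3$ together with $\mathbf{R}>c_1\eta^{-a}$ kills the bad exponential $\exp\{c_2\|\sigma\|_\infty^2\eta^{-1}(1+\|\nabla\sigma\|_\infty^2\mathbf{R}^2)\}$ at scale $y\eta^{-p_i}$, and that the slowest surviving rate among all thirteen pieces is precisely $\exp\{-c_3 y^{1/3}\eta^{-7/6}\}$; summing the thirteen estimates then yields Proposition \ref{remainder}.
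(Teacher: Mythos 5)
Your proposal is correct and follows essentially the same route as the paper's Appendix B proof: a union bound over the thirteen terms, the truncation event $A$ with $\P(A^c)\lesssim e^{-\mathbf{R}^2/2}$, exponential Chebyshev combined with Lemma \ref{E_k} and Proposition \ref{g} for the pieces carrying $\Arrowvert b(\theta_k)\Arrowvert_2$, Proposition \ref{martingale differences} with fractional $\alpha$ for the Gaussian-polynomial martingale pieces (after centering, with the conditional means absorbed by $y>c\eta^{1/2}$ and the Taylor remainders split via the mean-value/odd-moment step), and the observation that the slowest surviving rate is $\exp\{-c_3y^{1/3}\eta^{-7/6}\}$, coming from the $\Arrowvert\mathcal{R}(\theta_k,\xi_{k+1})\Arrowvert_2^3$- and $\Arrowvert\mathcal{R}(\theta_k,\xi_{k+1})\Arrowvert_2^4$-type sums in $\mathcal{R}_{\eta,10}$--$\mathcal{R}_{\eta,12}$. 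You also correctly identify why the hypotheses $y>\eta^{1/2}\mathbf{R}^3\Arrowvert\nabla\sigma\Arrowvert_{\infty}^3$ and $\mathbf{R}<c\Arrowvert\nabla\sigma\Arrowvert_{\infty}^{-1}\eta^{-1/3}$ are exactly what neutralize the factor $e^{c\Arrowvert\sigma\Arrowvert_{\infty}^2\eta^{-1}(1+\Arrowvert\nabla\sigma\Arrowvert_{\infty}^2\mathbf{R}^2)}$ from Proposition \ref{g}, which is the paper's mechanism as well.
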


\begin{rmk}
In comparison to the Euler-Maruyama scheme \eqref{EM}, the Milstein scheme \eqref{Milstein} incorporates an additional term $\mathcal{R}(\theta_k,\xi_{k+1})$, originating from $\nabla_{\sigma(\theta_k)\xi_{k+1}}\sigma(\theta_k)\not\equiv0$ in Assumption {\bf\ref{conditions}}. As a result, this extra term not only complicates the structure of the remainder but, more importantly, leads to a heavy-tailed distribution when proving the Proposition \ref{g} which is a key step in deriving the remainder's upper bound. More precisely, the proof of Proposition \ref{g} relies on a truncation of the standard normal random vectors $(\xi_k)_{k\geq1}$ via the event $A=\left\{|\xi_i|\leq\mathbf{R},i=1,\cdots,m\right\}$. Such truncation is essential to bound exponential moment expression of the form $\E_{\theta_0}\exp\left\{c\sum_{k=0}^{m-1}\eta\Arrowvert\mathcal{R}(\theta_k,\xi_{k+1})\Arrowvert_2^2\right\}$, which inherently involves the term $\E_{\theta_0}\exp\left\{c\sum_{k=0}^{m-1}\eta\Arrowvert\xi_{k+1}\Arrowvert_2^4\right\}$. Thus, compared with Lemma 3.2 in \cite{Fan}, this truncation deteriorates the outcome of Proposition \ref{g}, i.e.
\begin{align*}
    \P_{\theta_0}\left(\eta\sum_{k=0}^{m-1}\Arrowvert b(\theta_k)\Arrowvert_2^2>x\right)\leq Ce^{\left(c_1\Arrowvert\theta_0\Arrowvert_2^2+c_2\Arrowvert\sigma\Arrowvert_{\infty}^2\eta^{-1}\left(1+\Arrowvert\nabla\sigma\Arrowvert_{\infty}^2\left(\mathbf{R}^2+1\right)\right)\right)}e^{-c_3x}+c_4e^{-\frac{\mathbf{R}^2}{2}},
\end{align*}
which ultimately leading to a suboptimal probability estimates for the remainder terms. Here, by choosing $y=c\Arrowvert\nabla\sigma\Arrowvert_{\infty}^{3/4}\eta^{1/8}$, $\mathbf{R}=c\Arrowvert\nabla\sigma\Arrowvert_{\infty}^{-3/4}\eta^{-1/8}$, we have
\begin{align*}
\frac{\P\left(\mathcal{R}_\eta/\sqrt{\mathcal{Y}_\eta}\geq y\right)}{1-\Phi(x)}\leq Ce^{-c\Arrowvert\nabla\sigma\Arrowvert_{\infty}^{-3/2}\eta^{-1/4}}.
\end{align*}
Consequently, it leads to the upper bound in the Theorem \ref{W} and Theorem \ref{S}, and narrows the range of $x$ for which the result holds.
\end{rmk}

\section{Proof of Theorem \ref{CLT}}\label{ProofCLT}

We begin by stating a preparatory lemma which will enable us to study the convergence rate of the martingale $\mathcal{H}_{\eta}$.
\begin{lem}\label{convergence}
Suppose that {\bf Assumption} {\bf\ref{conditions}} holds. Let $(X_{t})_{t\geq0}$ and $(\theta_{k})_{k\geq0}$ be defined by \eqref{SDE} and \eqref{Milstein}, respectively. Then we have
\begin{align*}
\left|\pi\left(\left\Arrowvert\sigma^{T}\nabla f_{h}\right\Arrowvert_2^{2}\right)-\pi_{\eta}\left(\left\Arrowvert\sigma^{T}\nabla f_{h}\right\Arrowvert_2^{2}\right)\right|\leq C\eta.
\end{align*}
\end{lem}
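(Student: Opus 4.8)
The plan is to compare the two invariant measures $\pi$ and $\pi_\eta$ by exploiting that $g:=\|\sigma^{\rm T}\nabla f_h\|_2^2$ is a bounded (indeed $\mathcal{C}^1$) function with bounded growth dominated by the Lyapunov function $V(x)=1+\|x\|_2^2$, so that both $\pi(g)$ and $\pi_\eta(g)$ are finite and can be represented ergodically. Write $\pi_\eta(g)=\lim_{k\to\infty}\mathbb{E} g(\theta_k^{x_0})$ and $\pi(g)=\lim_{t\to\infty}\mathbb{E} g(X_t^{x_0})$ for a common starting point $x_0$; the key is then to bound $|\mathbb{E} g(X_t^{x_0})-\mathbb{E} g(\theta_k^{x_0})|$ along a matched time grid $t=k\eta$ and pass to the limit. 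The standard route is a one-step weak-error estimate combined with the exponential ergodicity of both systems. Concretely, for a smooth test function $\phi$ let $u(t,x)=\mathbb{E}[\phi(X_t^x)]$ solve the backward Kolmogorov PDE $\partial_t u = \mathcal{A}u$; then a telescoping argument gives
\begin{align*}
\mathbb{E}\phi(X_{m\eta}^{x_0})-\mathbb{E}\phi(\theta_m^{x_0})
=\sum_{k=0}^{m-1}\mathbb{E}\big(u((m-k)\eta,X_\eta^{\theta_k})-u((m-k)\eta,\theta_{k+1})\big),
\end{align*}
where the $X_\eta^{\theta_k}$ is the true solution started from $\theta_k$ over one step. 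Each summand is a one-step local weak error, which by a Taylor/It\^o expansion of order sufficient to see the Milstein correction is $O(\eta^2)$, uniformly in the remaining time, using the regularity bounds $\|\nabla^k f_h\|_\infty\le C$ for $k\le 4$ from \eqref{boundedness} together with the fourth-moment bound $\mathbb{E}\|\theta_k^{x_0}\|_2^4\le C(1+\|x_0\|_2^4)$ from Lemma \ref{ergo}.

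The key technical point is that summing $m=\eta^{-2}$ local errors of size $O(\eta^2)$ would naively give $O(1)$, not $O(\eta)$; the extra factor $\eta$ comes from the exponential-in-time contraction of the derivatives of $u$ (equivalently, of the semigroup difference). That is, one shows $\|\nabla^j u(s,\cdot)\|_\infty \le C e^{-\lambda s}$ for $j=1,2,3$ and some $\lambda>0$ depending on $K_1,L$ — this is exactly the content of the uniform-in-time regularity that underlies the Stein-equation bounds \eqref{boundedness} and the geometric ergodicity in Lemma \ref{ergo}. Then each summand is $O(\eta^2 e^{-\lambda(m-k)\eta})$ and $\sum_{k=0}^{m-1}\eta^2 e^{-\lambda(m-k)\eta}=O(\eta)$ after recognizing the sum as a Riemann sum for $\int_0^\infty e^{-\lambda s}\,\mathrm{d}s$. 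Applying this with $\phi=g$ and letting $m\to\infty$ (so $m\eta\to\infty$) yields $|\pi(g)-\pi_\eta(g)|\le C\eta$, since $\mathbb{E} g(X_{m\eta}^{x_0})\to\pi(g)$ and $\mathbb{E} g(\theta_m^{x_0})\to\pi_\eta(g)$ by the two ergodic theorems (the latter with rate from \eqref{CO2}).

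I would carry this out in the following order: (i) record that $g$ is Lipschitz and bounded, hence admissible for both ergodic limits, and fix $x_0=0$; (ii) set up the telescoping identity above and cite the backward Kolmogorov representation for $u$; (iii) establish or invoke the exponential decay of $\|\nabla^j u(s,\cdot)\|_\infty$, $j=1,2,3$, under Assumption \ref{conditions} (this can be derived as in \cite{Lu} from the synchronous-coupling/monotonicity estimate \eqref{Haosan}); (iv) perform the one-step weak expansion, using that the Milstein scheme matches the It\^o–Taylor expansion of the true step through order $\eta^{3/2}$ so the leading local error is $O(\eta^2(1+\mathbb{E}\|\theta_k\|_2^4)^{1/2})$, controlled by Lemma \ref{ergo}; (v) sum the geometric series to get the bound $C\eta$ for $|\mathbb{E} g(X_{m\eta})-\mathbb{E} g(\theta_m)|$ uniformly in $m$; (vi) let $m\to\infty$ and conclude. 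The main obstacle is step (iii)–(iv): making the one-step weak error genuinely $O(\eta^2)$ rather than $O(\eta^{3/2})$ requires that the Milstein correction term $\mathcal{R}(\theta_k,\xi_{k+1})$ cancels precisely the $\mathcal{O}(\eta^{3/2})$ contribution coming from the iterated stochastic integral $\int_0^\eta\int_0^s \nabla_{\sigma}\sigma\,\mathrm{d}B_r\,\mathrm{d}B_s$ in the It\^o–Taylor expansion of $X_\eta^{\theta_k}$, and that the surviving $\eta^2$ terms are paired against second and third derivatives of $u$ which are uniformly bounded and exponentially decaying; keeping track of these cancellations, and of the contribution of $\mathbb{E}\mathcal{R}=0$ (Remark \ref{Re}), is the delicate bookkeeping. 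An alternative, if the PDE route is deemed too heavy, is to instead bound $|\pi_\eta(g)-\pi(g)|$ through a Wasserstein-type estimate: use the known $W_1$ convergence rate $W_1(\pi,\pi_\eta)=O(\eta)$ for Milstein (cf. \cite{Gao,WL19}) plus Lipschitzness of $g$ — but that essentially repackages the same weak-error analysis, so I expect the telescoping argument above to be the cleanest self-contained proof.
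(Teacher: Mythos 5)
Your proposal is correct in outline but takes a genuinely different route from the paper. The paper avoids any telescoping over the trajectory: it introduces a \emph{second} Stein (Poisson) equation $\mathcal{A}\bar f_h = \|\sigma^{\rm T}\nabla f_h\|_2^2-\pi(\|\sigma^{\rm T}\nabla f_h\|_2^2)$ for the new test function (admissible since $\|\sigma^{\rm T}\nabla f_h\|_2^2\in\mathcal{C}_b^2$ by \eqref{boundedness}), starts the chain in stationarity $\theta_0\sim\pi_\eta$, and exploits the single identity $0=\E\big(\bar f_h(\theta_1)-\bar f_h(\theta_0)\big)$. A fourth-order Taylor expansion of this one step isolates $\eta\,\E\big(\mathcal{A}\bar f_h(\theta_0)\big)$ plus remainders of order $\eta^2$ (controlled by $\pi_\eta(V)\le 4C_3/K_1$ from \eqref{CO1} and the moment bounds of Lemma \ref{ergo}), giving $|\E_{\pi_\eta}\mathcal{A}\bar f_h|\le C\eta$, which is exactly the claim. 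Thus all long-time information is packaged once in the regularity bound $\|\nabla^k\bar f_h\|\le C$, and no semigroup derivative decay or passage to the limit $m\to\infty$ is needed. Your Talay--Tubaro telescoping is a standard and valid alternative, and it yields the stronger, uniform-in-$m$ finite-time bound $|\E g(X_{m\eta})-\E g(\theta_m)|\le C\eta$; the price is that you must actually establish the pointwise exponential decay $\|\nabla^j P_sg\|_\infty\le Ce^{-\lambda s}$ (the paper only ever uses its integrated consequence \eqref{boundedness}) and carry out the one-step weak expansion against the time-dependent test functions $u(s,\cdot)$. One small correction to your ``delicate bookkeeping'' remark: for the \emph{weak} local error no precise cancellation between the Milstein correction and the iterated integral $\int_0^\eta\int_0^s\nabla_\sigma\sigma\,\mathrm{d}B_r\mathrm{d}B_s$ is required, since both are mean zero conditionally on $\theta_k$ (for $\mathcal{R}$ this is by construction, cf.\ Remark \ref{Re}); the Euler scheme already has local weak error $O(\eta^2)$, and one only needs to check that the extra term $\tfrac12\eta\mathcal{R}$ does not degrade this order, which follows from $\E_{\theta_k}\mathcal{R}=0$ and $\E_{\theta_k}\|\mathcal{R}\|_2^2=O(1)$.
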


\begin{proof}
Let the initial value $\theta_0\thicksim\pi_{\eta}$ and $\triangle\theta_0=\theta_1-\theta_0$. We have by (\ref{Milstein}) that
\begin{gather}\label{Lem4.1-1}
\E_{\theta_0}\big(\triangle\theta_0\big)=\eta b(\theta_0),\nonumber\\ \E_{\theta_0}\big(\triangle\theta_0(\triangle\theta_0)^{\rm T}\big)=\eta^2 b(\theta_0)b(\theta_0)^{\rm T}+\eta\sigma(\theta_0)\sigma(\theta_0)^{\rm T}+\frac{1}{4}\eta^2\E_{\theta_0}\left(\mathcal{R}(\theta_0,\xi_1)\mathcal{R}(\theta_0,\xi_1)^{\rm T}\right).
\end{gather}
Consider the Stein's equation
\begin{equation}\label{Lem4.1-2}
\left\Arrowvert\sigma^{\rm T}\nabla f_h\right\Arrowvert_2^2-\pi\left(\left\Arrowvert\sigma^{\rm T}\nabla f_h\right\Arrowvert_2^2\right)=\mathcal{A}\bar{f_h}.
\end{equation}
By \eqref{boundedness}, the test function $\left\Arrowvert\sigma^{\rm T}\nabla f_h\right\Arrowvert_2^2\in\mathcal{C}_b^2(\R^d,\R)$, and $\bar{f_h}$ exists and satisfies $\Arrowvert\nabla^k\bar{f_h}\Arrowvert\leq C$, $k=0,1,2,3,4$. Using the Taylor expansion and stationarity of $(\theta_k)_{k\geq 0}$ that
\begin{align}\label{Lem4.1-3}
0
=&\E\left(\bar{f_h}(\theta_1)-\bar{f_h}(\theta_0)\right)\nonumber\\
=&\E\left(\left\langle\nabla\bar{f_h}(\theta_0),\triangle\theta_0\right\rangle\right)+\frac{1}{2}\E\left(\left\langle\nabla^2\bar{f_h}(\theta_0),\triangle\theta_0(\triangle\theta_0)^{\rm T}\right\rangle_{\rm{HS}}\right)\nonumber\\&+\frac{1}{6}\E\left(\left\langle\left\langle\nabla^3\bar{f_h}(\theta_0),\triangle\theta_0(\triangle\theta_0)^{\rm T}\right\rangle_{\rm{HS}},\triangle\theta_0\right\rangle\right)\nonumber\\
&+\frac{1}{6}\int_0^1(1-t)^3\E\left(\nabla_{\triangle\theta_0}\nabla_{\triangle\theta_0}\nabla_{\triangle\theta_0}\nabla_{\triangle\theta_0}\bar{f_h}(\theta_0+t\triangle\theta_0)\right)\mathrm{d}t.
\end{align}
By (\ref{Lem4.1-1}), we have
$$
\E\left(\left\langle\nabla\bar{f_h}(\theta_0),\triangle\theta_0\right\rangle\right)=\E\left(\left\langle\nabla\bar{f_h}(\theta_0),\eta b(\theta_0)\right\rangle\right),
$$
and
\begin{align*}
&\E\left(\left\langle\nabla^2\bar{f_h}(\theta_0),\triangle\theta_0(\triangle\theta_0)^{\rm T}\right\rangle_{\rm{HS}}\right)\\
=&\E\left(\left\langle\nabla^2\bar{f_h}(\theta_0),\eta^2 b(\theta_0)b(\theta_0)^{\rm T}+\eta\sigma(\theta_0)\sigma(\theta_0)^{\rm T}+\frac{1}{4}\eta^2\E_{\theta_0}\big(\mathcal{R}(\theta_0,\xi_1)\mathcal{R}(\theta_0,\xi_1)^{\rm T}\big)\right\rangle_{\rm{HS}}\right).
\end{align*}
Together with (\ref{Af(x)}) and (\ref{Lem4.1-3}), it holds
\begin{align*}
\E\left(\mathcal{A}(\bar{f_h}(\theta_0))\right)=&-\frac{1}{2}\E\left(\left\langle\nabla^2\bar{f_h}(\theta_0),\eta b(\theta_0)b(\theta_0)^{\rm T}+\frac{1}{4}\eta\mathcal{R}(\theta_0,\xi_1)\mathcal{R}(\theta_0,\xi_1)^{\rm T}\right\rangle_{\rm{HS}}\right)\\
&-\frac{1}{6\eta}\E\left(\left\langle\left\langle\nabla^3\bar{f_h}(\theta_0),\triangle\theta_0(\triangle\theta_0)^{\rm T}\right\rangle_{\rm{HS}},\triangle\theta_0\right\rangle\right)\nonumber\\&-\frac{1}{6\eta}\int_0^1(1-t)^3\E\left(\nabla_{\triangle\theta_0}\nabla_{\triangle\theta_0}\nabla_{\triangle\theta_0}\nabla_{\triangle\theta_0}\bar{f_h}(\theta_0+t\triangle\theta_0)\right)\mathrm{d}t.
\end{align*}
For the first term, \eqref{CO1} implies $\pi_{\eta}(V)\leq\frac{4C_3}{K_1}$, together with \eqref{|g(x)|^2}, \eqref{boundedness} and Remark \ref{Re}, we have
\begin{align*}
    &\left|\frac{1}{2}\E\left(\left\langle\nabla^2\bar{f_h}(\theta_0),\eta b(\theta_0)b(\theta_0)^{\rm T}+\frac{1}{4}\eta\mathcal{R}(\theta_0,\xi_1)\mathcal{R}(\theta_0,\xi_1)^{\rm T}\right\rangle_{\rm{HS}}\right)\right|\\
    \leq &C\left(\eta\pi_{\eta}(\Arrowvert b\Arrowvert_2^2)+\eta\Arrowvert\sigma\Arrowvert_{\infty}^2\Arrowvert\nabla\sigma\Arrowvert_{\infty}^2\right)\leq C\eta.
\end{align*}
Using \eqref{boundedness} and Lemma \ref{ergo} again, we have
\begin{align*}
&\left|-\frac{1}{6\eta}\E\left(\left\langle\left\langle\nabla^3\bar{f_h}(\theta_0),\triangle\theta_0(\triangle\theta_0)^{\rm T}\right\rangle_{\rm{HS}},\triangle\theta_0\right\rangle\right)\right|\\
\leq&\frac{C}{\eta}\Big|\E\Big(\nabla_{\eta b(\theta_0)}\nabla_{\eta b(\theta_0)}\nabla_{\eta b(\theta_0)}\bar{f_h}(\theta_0)+\nabla_{\eta\mathcal{R}(\theta_0,\xi_1)}\nabla_{\eta\mathcal{R}(\theta_0,\xi_1)}\nabla_{\eta\mathcal{R}(\theta_0,\xi_1)}\bar{f_h}(\theta_0)\\&\qquad+\nabla_{\eta b(\theta_0)}\nabla_{\sqrt{\eta}\sigma(\theta_0)\xi_1}\nabla_{\sqrt{\eta}\sigma(\theta_0)\xi_1}\bar{f_h}(\theta_0)\\&\qquad+\nabla_{\eta b(\theta_0)}\nabla_{\eta\mathcal{R}(\theta_0,\xi_1)}\nabla_{\eta\mathcal{R}(\theta_0,\xi_1)}\bar{f_h}(\theta_0)+\nabla_{\sqrt{\eta}\sigma(\theta_0)\xi_1}\nabla_{\sqrt{\eta}\sigma(\theta_0)\xi_1}\nabla_{\eta\mathcal{R}(\theta_0,\xi_1)}\bar{f_h}(\theta_0)\Big)\Big|\\
\leq& \frac{C}{\eta}\left|\E\big(\eta^3\Arrowvert b(\theta_0)\Arrowvert_2^3+\eta^3\Arrowvert\mathcal{R}(\theta_0,\xi_1)\Arrowvert_2^3+\eta^2\Arrowvert\sigma\Arrowvert_{\infty}^2\Arrowvert b(\theta_0)\Arrowvert_2\right.\\&\left.\qquad+\eta^3\Arrowvert\sigma\Arrowvert_{\infty}^2\Arrowvert\nabla\sigma\Arrowvert_{\infty}^2\Arrowvert b(\theta_0)\Arrowvert_2+\eta^2\Arrowvert\sigma\Arrowvert_{\infty}^3\Arrowvert\nabla\sigma\Arrowvert_{\infty}\big)\right|\\
\leq& C\eta^2\E\Arrowvert b(\theta_0)\Arrowvert_2^3+C\eta\E\Arrowvert b(\theta_0)\Arrowvert_2+C\eta
\\\leq &C\eta^2\left(\E\Arrowvert b(\theta_0)\Arrowvert_2^4\right)^{3/4}+C\eta\left(\E\Arrowvert b(\theta_0)\Arrowvert_2^2\right)^{1/2}+C\eta\leq C\eta,
\end{align*}
and
\begin{align*}
&\left|\frac{1}{24\eta}\int_0^1\E\big(\nabla_{\triangle\theta_0}\nabla_{\triangle\theta_0}\nabla_{\triangle\theta_0}\nabla_{\triangle\theta_0}\bar{f_h}(\theta_0+t\triangle\theta_0)\big)\mathrm{d}t\right|\\
\leq&\frac{C}{\eta}\E\Arrowvert\eta b(\theta_0)+\sqrt{\eta}\sigma(\theta_0)\xi_1+\frac{1}{2}\eta\mathcal{R}(\theta_0,\xi_1)\Arrowvert_2^4
\\\leq &C\left(\eta^3\E\Arrowvert b(\theta_0)\Arrowvert_2^4+\eta\Arrowvert\sigma\Arrowvert_{\infty}^4+\eta^3\Arrowvert\sigma\Arrowvert_{\infty}^4\Arrowvert\nabla\sigma\Arrowvert_{\infty}^4\right)\leq C\eta.
\end{align*}
Hence, it holds that $\Big|\E\Big(\mathcal{A}\big(\bar{f_h}(\theta_0)\big)\Big)\Big|\leq C\eta$.

Finally, we deduce from Stein's equation (\ref{Stein}) that
\begin{align*}
\big|\pi_{\eta}\big(\Arrowvert\sigma^{\rm T}\nabla f_h\Arrowvert_2^2\big)-\pi\big(\Arrowvert\sigma^{\rm T}\nabla f_h\Arrowvert_2^2\big)\big|=&\Big|\E\Big(\Arrowvert\sigma(\theta_0)^{\rm T}\nabla f_h(\theta_0)\Arrowvert_2^2-\pi\big(\Arrowvert\sigma^{\rm T}\nabla f_h\Arrowvert_2^2\big)\Big)\Big|\\
=&\Big|\E\Big(\mathcal{A}\big(\bar{f_h}(\theta_0)\big)\Big)\Big|\leq C\eta.
\end{align*}
\end{proof}
\begin{proof}[Proof of Theorem \ref{CLT}]
According to Lemma \ref{convergence}, by the same argument as the proof of Theorem 2.4 in \cite{Lu}, one can derive that
\begin{align*}
\frac{1}{\sqrt{\eta}}\left(\Pi_\eta(h)-\pi(h)\right)\xrightarrow{\mathcal{L}} N\left(0,\pi\left(\left\Arrowvert\sigma^{{\rm T}}\nabla f_{h}\right\Arrowvert_2^{2}\right)\right).
\end{align*}
\end{proof}

\section{Proof of Theorem \ref{W}}\label{ProofW}

\subsection{Cram\'{e}r-type moderate deviations of martingale difference}
In the proof of Theorem \ref{W}, we also need the following normalized Cram\'{e}r-type moderate deviations for martingales \cite{Grama,Shao}. Explicitly, let $(\xi_i,\mathcal{F}_i)_{i=0,\cdots,n}$ be a finite sequence of martingale differences. Set $X_k=\sum_{i=1}^k\xi_i,k=1,\cdots,n$. Denote by $\big<X\big>$ the predictable quadratic of the martingale $X=(X_k,\mathcal{F}_k)_{k=0,\cdots,n}$, that is
$$
\big<X\big>_0=0,\quad\big<X\big>_k=\sum_{i=1}^k\E(\xi_i^2|\mathcal{F}_{i-1}),\quad k=1,\cdots,n.
$$
In the sequel, we shall use the following conditions:
\begin{itemize}
\item[(A1)] There exists a number $\epsilon_n\in\left(0,\frac{1}{2}\right]$ such that
$$
\left\lvert\E\left(\xi_i^k\big|\mathcal{F}_{i-1}\right)\right\rvert\leq\frac{1}{2}k!\epsilon_n^{k-2}\E\left(\xi_i^2\big|\mathcal{F}_{i-1}\right),
\text{\quad for all $k\geq2$ and $1\leq i\leq n$};
$$
\item[(A2)] There exist a number $\delta_n\in\left(0,\frac{1}{2}\right]$ and a positive constant $C_1$ such that for all $x>0$,
$$
\P\left(\left\lvert\big<X\big>_n-1\right\rvert\geq x\right)\leq C_1\exp\big\{-x^2\delta_n^{-2}\big\}.
$$
\item[(A2')] There exist a number $\delta_n\in\left(0,\frac{1}{2}\right]$ and a positive constant $C_2$ such that for all $x>0$,
$$
\P\left(\left\lvert\big<X\big>_n-1\right\rvert\geq x\right)\leq C_2\exp\big\{-x\delta_n^{-2}\big\}.
$$
\end{itemize}

\begin{pro}\label{CMD}
(Theorem 2.2 in \cite{Grama}, Theorem 2.1 in \cite{Shao}) Assume that conditions (A1), (A2) or (A2') are satisfied. Then the following inequality holds for all $0\leq x=o\big(\min\big\{\epsilon_n^{-1},\delta_n^{-1}\big\}\big)$,
\begin{align*}
\left\lvert\ln{\frac{\P\big(X_n/\sqrt{\big<X\big>_n}>x\big)}{1-\Phi(x)}}\right\rvert\leq C\Big(x^3\big(\epsilon_n+\delta_n\big)+(1+x)\big(\delta_n|\ln{\delta_n}|+\epsilon_n|\ln{\epsilon_n}|\big)\Big).
\end{align*}
\end{pro}

\subsection{Proof of Theorem \ref{W}}
Without loss of generality, we assume from now on that $\E\mathcal{Y}_\eta=1$. Notice that for all $0\leq x=o(\eta^{-1/2})$ and $0\leq x-y=o(\eta^{-1/2})$, we have
\begin{align*}
\P\big(W_\eta\geq x\big)=\P\left(\frac{\mathcal{H}_\eta+\mathcal{R}_\eta}{\sqrt{\mathcal{Y}_\eta}}\geq x\right)
\leq\P\left(\frac{\mathcal{H}_\eta}{\sqrt{\mathcal{Y}_\eta}}\geq x-y\right)
+\P\left(\frac{\mathcal{R}_\eta}{\sqrt{\mathcal{Y}_\eta}}\geq y\right).
\end{align*}

First, to estimate $\P\left(\mathcal{H}_{\eta}/\sqrt{\mathcal{Y}_{\eta}}\geq x-y\right)$, set the filtration $\mathcal{F}_n=\sigma(\theta_0,\xi_k,1\leq k\leq n)$. Then $\big(-\eta\big<\nabla f_h(\theta_k),\sigma(\theta_k)\xi_{k+1}\big>,\mathcal{F}_{k+1}\big)_{k\geq 0}$ is a sequence of martingale differences. Using $(k-1)!!\leq\frac{1}{2}k!,\, k\geq2$, we have 
$$\left|\E\big((-\eta\big<\nabla f_h(\theta_k),\sigma(\theta_k)\xi_{k+1}\big>)^k|\mathcal{F}_k\big)\right|\leq \frac{1}{2}k!\left(\Arrowvert\nabla f_h(\theta_k)\Arrowvert_2\Arrowvert\sigma(\theta_k)\Arrowvert\eta\right)^k.$$
Let $\lambda_{\min}>0$ denote the smallest eigenvalue of positive definite matrix $\sigma$ in {\bf Assumption} {\bf \ref{conditions2}}, we can obtain 
$$\E\big((-\eta\big<\nabla f_h(\theta_k),\sigma(\theta_k)\xi_{k+1}\big>)^2|\mathcal{F}_k\big)\geq \lambda_{\min}^2\Arrowvert\nabla f_h(\theta_k)\Arrowvert_2^2\eta^2.$$ 
Thus we have the Bernstein condition (see (4) in \cite{Grama}), by the boundedness of $\Arrowvert\nabla f_h(\theta_k)\Arrowvert_2$ and $\Arrowvert\sigma(\theta_k)\Arrowvert$, it holds for all $k\geq 2$, 
\begin{align*}
&\left|\E\big((-\eta\big<\nabla f_h(\theta_k),\sigma(\theta_k)\xi_{k+1}\big>)^k|\mathcal{F}_k\big)\right|\\
\leq &\frac{1}{2}k!\left(\Arrowvert\nabla f_h(\theta_k)\Arrowvert_2\left(\Arrowvert\sigma(\theta_k)\Arrowvert^k/\lambda_{\min}^2\right)^{\frac{1}{k-2}}\eta\right)^{k-2}\E\big((-\eta\big<\nabla f_h(\theta_k),\sigma(\theta_k)\xi_{k+1}\big>)^2|\mathcal{F}_k\big).
\end{align*}
Since
\begin{align*}
\big<\mathcal{H}_\eta\big>_m=\sum_{k=0}^{m-1}\E\big((-\eta\big<\nabla f_h(\theta_k),\sigma(\theta_k)\xi_{k+1}\big>)^2|\mathcal{F}_k\big)=\mathcal{Y}_\eta,
\end{align*}
by Proposition \ref{EY} with $k=\eta^{-2}$ and $\E\mathcal{Y}_{\eta}=1$, we have for all $x>0$,
\begin{align*}
\P\big(|\big<\mathcal{H}_\eta\big>_m-1|\geq x\big)=\P\big(|\mathcal{Y}_\eta-\E\mathcal{Y}_\eta|\geq x\big)\leq 2e^{-cx^2\eta^{-2}}+c_3\left(1+\eta^{-1}\right)\left(e^{-c_1x\eta^{-1}}+\frac{e^{-c_2x\eta^{-2}}}{1-e^{-u_1\eta}}\right),
\end{align*}
so we have $$\P\big(|\big<\mathcal{H}_\eta\big>_m-1|\geq x\big)\leq ce^{-c_1x\eta^{-1}},\quad x\geq \eta;$$
and
$$\P\big(|\big<\mathcal{H}_\eta\big>_m-1|\geq x\big)\leq ce^{-c_1x^2\eta^{-2}},\quad x<\eta,$$
i.e. we have $\delta_n=c\eta^{1/2}, x\geq \eta$ and $\delta_n=c\eta, x<\eta$.

Using the fact that
\begin{align}\label{normal}
\frac{1}{\sqrt{2\pi}(1+x)}e^{-x^2/2}\leq1-\Phi(x)\leq\frac{1}{\sqrt{\pi}(1+x)}e^{-x^2/2},\quad x\geq0,
\end{align}
we have $\left(1-\Phi(x-y)\right)/\left(1-\Phi(x)\right)=O\left(\exp\{xy-\frac{1}{2}y^2\}\right)$. By Proposition \ref{CMD} with $\epsilon_n=c\eta$ and $\delta_n=c\eta^{1/2}$, we get for all $0<y\leq x=o(\eta^{-1/2})$,
\begin{align*}
\frac{\P\big(\mathcal{H}_\eta/\sqrt{\mathcal{Y}_\eta}\geq x-y\big)}{1-\Phi(x)}
&=\frac{\P\big(\mathcal{H}_\eta/\sqrt{\mathcal{Y}_\eta}\geq x-y\big)}{1-\Phi(x-y)}\frac{1-\Phi(x-y)}{1-\Phi(x)}\\
&\leq \exp\left\{c_1\left(x^3\eta^{1/2}+(1+x)\eta^{1/2}|\ln\eta^{1/2}|\right)\right\}\exp\left\{c_2xy\right\}\\
&\leq \exp\left\{c_3\left(x^3\eta^{1/2}+(1+x)\eta^{1/2}|\ln\eta^{1/2}|+xy\right)\right\}.
\end{align*}

Second, we can obtain for all $0<y\leq x=o(\eta^{-1/2})$,
\begin{align}\label{main probability 1}
\frac{\P\left(W_\eta\geq x\right)}{1-\Phi(x)}
&\leq \frac{\P\big(\mathcal{H}_\eta/\sqrt{\mathcal{Y}_\eta}\geq x-y\big)}{1-\Phi(x)}+\frac{\P\left(\mathcal{R}_{\eta}/\sqrt{\mathcal{Y_{\eta}}}\geq y\right)}{1-\Phi(x)}\nonumber\\
&\leq \exp\left\{c_3\left(x^3\eta^{1/2}+(1+x)\eta^{1/2}|\ln\eta^{1/2}|+xy\right)\right\}+\frac{\P\left(\mathcal{R}_{\eta}/\sqrt{\mathcal{Y_{\eta}}}\geq y\right)}{1-\Phi(x)}.
\end{align}

Similarly, we have for all $0<y\leq x=o(\eta^{-1/2})$,
\begin{align}\label{main probability 2}
\frac{\P\left(W_\eta\geq x\right)}{1-\Phi(x)}
&\geq \frac{\P\big(\mathcal{H}_\eta/\sqrt{\mathcal{Y}_\eta}\geq x+y\big)}{1-\Phi(x)}-\frac{\P\left(\mathcal{R}_{\eta}/\sqrt{\mathcal{Y_{\eta}}}\leq -y\right)}{1-\Phi(x)}\nonumber\\
&\geq \exp\left\{-c_3\left(x^3\eta^{1/2}+(1+x)\eta^{1/2}|\ln\eta^{1/2}|+xy\right)\right\}-\frac{\P\left(\mathcal{R}_{\eta}/\sqrt{\mathcal{Y_{\eta}}}\leq -y\right)}{1-\Phi(x)}.
\end{align}

Third, we give an estimation for $\P\left(\frac{\mathcal{R}_\eta}{\sqrt{\mathcal{Y}_\eta}}\geq y\right)\big/\big(1-\Phi(x)\big)$. By $\nabla_{\sigma(\theta_k)\xi_{k+1}}\sigma(\theta_k)\not\equiv 0$ in Assumption {\bf\ref{conditions}} with bounded $\Arrowvert\nabla\sigma\Arrowvert_{\infty}$, Proposition \ref{remainder} implies the following: for all $\eta^{1/2}\mathbf{R}^3\Arrowvert\nabla\sigma\Arrowvert_{\infty}^3<y=o(\eta^{-1/2})$ and $\mathbf{R}$ satisfying $c_1\eta^{-a}<\mathbf{R}<c_2\Arrowvert\nabla\sigma\Arrowvert_{\infty}^{-1}\eta^{-1/3},\; a>0$,
$$
\P\left(\left\lvert\mathcal{R}_{\eta}\right\rvert\geq y\right)\leq c_1\exp\left\{-\frac{\mathbf{R}^2}{2}\right\}.
$$
Thus we have
\begin{align*}
\P\left(\frac{\mathcal{R}_\eta}{\sqrt{\mathcal{Y}_\eta}}\geq y\right)
\leq&\P\left(\frac{\mathcal{R}_\eta}{\sqrt{\mathcal{Y}_\eta}}\geq y,\mathcal{Y}_\eta>\E\mathcal{Y}_\eta-\frac{1}{2}\E\mathcal{Y}_\eta\right)+\P\left(\mathcal{Y}_\eta\leq\E\mathcal{Y}_\eta-\frac{1}{2}\E\mathcal{Y}_\eta\right)\\
\leq&\P\left(\frac{\mathcal{R}_\eta}{\sqrt{\E\mathcal{Y}_\eta/2}}\geq y\right)+\P\left(\left|\E\mathcal{Y}_\eta-\mathcal{Y}_\eta\right|\geq\frac{1}{2}\E\mathcal{Y}_\eta\right)\\
\leq& c_1\exp\left\{-\frac{\mathbf{R}^2}{2}\right\}+2e^{-c\eta^{-2}}+c_3\left(1+\eta^{-1}\right)\left(e^{-c_1\eta^{-1}}+\frac{e^{-c_2\eta^{-2}}}{1-e^{-u_1\eta}}\right)
\leq ce^{-\frac{\mathbf{R}^2}{2}}.
\end{align*}
Furthermore, together with \eqref{normal}, we can obtain for all $\eta^{1/2}\mathbf{R}^3\Arrowvert\nabla\sigma\Arrowvert_{\infty}^3<y=o(\eta^{-1/2})$,
\begin{align}\label{tail probability}
\frac{\P\left(\mathcal{R}_\eta/\sqrt{\mathcal{Y}_\eta}\geq y\right)}{1-\Phi(x)}
\leq C(1+x)e^{\frac{1}{2}x^2}\P\left(\mathcal{R}_\eta/\sqrt{\mathcal{Y}_\eta}\geq y\right)\leq C(1+x)\exp\left\{\frac{1}{2}\left(x^2-\mathbf{R}^2\right)\right\},
\end{align}
which converges to 0 as $\eta\to 0$ uniformly for
$$
\eta^{1/2}\mathbf{R}^3\Arrowvert\nabla\sigma\Arrowvert_{\infty}^3<y\leq x<\mathbf{R}.
$$
Together with \eqref{main probability 1} and \eqref{main probability 2}, we can choose $y=\epsilon x^{-1}$ and $\mathbf{R}=\epsilon^{-1}x$, where $\epsilon>0$ is taken to be sufficiently small. And then we can take $y=c\Arrowvert\nabla\sigma\Arrowvert_{\infty}^{3/4}\eta^{1/8}$, $\mathbf{R}=c\Arrowvert\nabla\sigma\Arrowvert_{\infty}^{-3/4}\eta^{-1/8}$ and we have
\begin{align*}
\frac{\P\left(\mathcal{R}_\eta/\sqrt{\mathcal{Y}_\eta}\geq y\right)}{1-\Phi(x)}\leq Ce^{-c\Arrowvert\nabla\sigma\Arrowvert_{\infty}^{-3/2}\eta^{-1/4}}\to 0,
\end{align*}
uniformly for $c\Arrowvert
\nabla\sigma\Arrowvert_{\infty}^{3/4}\eta^{1/8}\leq x=o(\Arrowvert\nabla\sigma\Arrowvert_{\infty}^{-3/4}\eta^{-1/8})$ as $\eta$ vanishes.

Similarly, we can analyze $\P\left(\mathcal{R}_\eta/\sqrt{\mathcal{Y}_\eta}\leq -y\right)/\left({1-\Phi(x)}\right).$ Furthermore, this together with (\ref{main probability 1}) and (\ref{main probability 2}), the proof of Theorem \ref{W} is complete.

\qed

\section{Proof of Theorem \ref{S}}\label{ProofS}

Assume that $\varepsilon_x\in(0,1/2]$. It is easy to see that for all $x\geq0$,
\begin{align}\label{P_u}
&\P\big(S_\eta>x\big)\nonumber\\
=&\P\big(\eta^{-1/2}(\Pi_\eta(h)-\pi(h))>x\sqrt{\mathcal{V}_\eta}\big)\nonumber\\
=&\P\big(\eta^{-1/2}(\Pi_\eta(h)-\pi(h))>x\sqrt{\mathcal{V}_\eta},\mathcal{V}_\eta\geq(1-\varepsilon_x)\mathcal{Y}_\eta\big)\nonumber\\&+\P\big(\eta^{-1/2}(\Pi_\eta(h)-\pi(h))>x\sqrt{\mathcal{V}_\eta},\mathcal{V}_\eta<(1-\varepsilon_x)\mathcal{Y}_\eta\big)\nonumber\\
\leq&\P\big(W_\eta\geq x\sqrt{1-\varepsilon_x}\big)+\P\big(\mathcal{V}_\eta-\mathcal{Y}_\eta<-\varepsilon_x\mathcal{Y}_\eta,\mathcal{Y}_\eta\geq\frac{1}{2}\E\mathcal{Y}_\eta\big)+\P\big(\mathcal{V}_\eta-\mathcal{Y}_\eta<-\varepsilon_x\mathcal{Y}_\eta,\mathcal{Y}_\eta<\frac{1}{2}\E\mathcal{Y}_\eta\big)\nonumber\\
\leq&\P\big(W_\eta\geq x\sqrt{1-\varepsilon_x}\big)+\P\big(\mathcal{V}_\eta-\mathcal{Y}_\eta<-\frac{1}{2}\varepsilon_x\E\mathcal{Y}_\eta\big)+\P\big(\mathcal{Y}_\eta-\E\mathcal{Y}_\eta<-\frac{1}{2}\E\mathcal{Y}_\eta\big)\nonumber\\:=&P_1+P_2+P_3.
\end{align}
First, by Theorem \ref{W} and \eqref{normal}, for $c\Arrowvert\nabla\sigma\Arrowvert_{\infty}^{3/4}\eta^{1/8}\leq x=o(\Arrowvert\nabla\sigma\Arrowvert_{\infty}^{-3/4}\eta^{-1/8})$, we have
\begin{align}\label{P_1}
P_1
&\leq\Big(1-\Phi\big(x\sqrt{1-\varepsilon_x}\big)\Big)\exp\left\{c\left(x^3\eta^{1/2}+(1+x)\eta^{1/2}|\ln\eta^{1/2}|+x\Arrowvert\nabla\sigma\Arrowvert_{\infty}^{3/4}\eta^{1/8}\right)\right\}\nonumber\\
&\leq\Big(1-\Phi\big(x\big)\Big)\exp\left\{c\left(x^2\varepsilon_x+x^3\eta^{1/2}+(1+x)\eta^{1/2}|\ln\eta^{1/2}|+x\Arrowvert\nabla\sigma\Arrowvert_{\infty}^{3/4}\eta^{1/8}\right)\right\}.
\end{align}
Second, for $0\leq k\leq[\eta^{-2}]-1$, denote
\begin{align*}
    \Psi_{k+1}=\eta^{-1}\big\Arrowvert\big(\theta_{k+1}-\theta_k-\eta b(\theta_k)-\frac{1}{2}\eta\mathcal{R}(\theta_k,\xi_{k+1})\big)^{\rm T}\nabla f_h(\theta_k)\big\Arrowvert_2^2-\big\Arrowvert \sigma(\theta_k)^{\rm T}\nabla f_h(\theta_k)\big\Arrowvert_2^2,
\end{align*}
by Proposition \ref{VY} and $\E\mathcal{Y}_{\eta}=1$, we get for all $x\geq0$,
\begin{align}\label{P_2}
P_2
=&\P\big(\mathcal{V}_\eta-\mathcal{Y}_\eta<-\frac{1}{2}\varepsilon_x\E\mathcal{Y}_\eta\big)
\leq\P\big(\lvert\mathcal{V}_\eta-\mathcal{Y}_\eta\rvert>\frac{1}{2}\varepsilon_x\E\mathcal{Y}_\eta\big)\nonumber\\
=&\P\Bigg(\Bigg\lvert\sum_{k=0}^{[\eta^{-2}]-1}\Psi_{k+1}\Bigg\rvert>\frac{1}{2}\varepsilon_x\eta^{-2}\Bigg)\nonumber\\
\leq& c_1\exp\left\{-\frac{\frac{1}{4}\varepsilon_x^2\eta^{-4}}{c_1\big(\eta^{-2}+c\varepsilon_x\eta^{-2}\big)}\right\}
\leq c_1\exp\big\{-c\varepsilon_x^2\eta^{-2}\big\}.
\end{align}
Third, using Proposition \ref{EY} and $\E\mathcal{Y}_{\eta}=1$, we deduce that for all $x\geq0$,
\begin{align}\label{P_3}
P_3
&=\P\big(\mathcal{Y}_\eta-\E\mathcal{Y}_\eta<-\frac{1}{2}\E\mathcal{Y}_\eta\big)=\P\big(\lvert\mathcal{Y}_\eta-\E\mathcal{Y}_\eta\rvert>\frac{1}{2}\E\mathcal{Y}_\eta\big)\nonumber\\
&=\P\left(\left\lvert\sum_{k=0}^{m-1}\big\Arrowvert\sigma(\theta_k)^{\rm T}\nabla f_h(\theta_k)\big\Arrowvert_2^2-\sum_{k=0}^{m-1}\E\big\Arrowvert\sigma(\theta_k)^{\rm T}\nabla f_h(\theta_k)\big\Arrowvert_2^2\right\rvert>\frac{1}{2}m\right)\nonumber\\
&\leq 2e^{-c\eta^{-2}}+c_3\left(1+\eta^{-1}\right)\left(e^{-c_1\eta^{-1}}+\frac{e^{-c_2\eta^{-2}}}{1-e^{-u_1\eta}}\right)\leq c\left(1+\eta^{-1}\right)e^{-c_1\eta^{-1}}.
\end{align}
Finally, taking $\varepsilon_x=c_0\eta^{1/2}$ with $c_0>0$, by (\ref{P_u})-(\ref{P_3}) and \eqref{normal}, we deduce that for all $c\Arrowvert\nabla\sigma\Arrowvert_{\infty}^{3/4}\eta^{1/8}\leq x=o(\Arrowvert\nabla\sigma\Arrowvert_{\infty}^{-3/4}\eta^{-1/8})$,
\begin{align*}
\P\big(S_\eta>x\big)\leq&\left(1-\Phi(x)\right)\exp\left\{c\left(x^3\eta^{1/2}+x^2\eta^{1/2}+(1+x)\eta^{1/2}|\ln\eta^{1/2}|+x\Arrowvert\nabla\sigma\Arrowvert_{\infty}^{3/4}\eta^{1/8}\right)\right\}\\
&+c_1\exp\left\{-cc_0^2\eta^{-1}\right\}+c\left(1+\eta^{-1}\right)e^{-c_1\eta^{-1}}\\
\leq&\left(1-\Phi(x)\right)\exp\left\{c\left(x^3\eta^{1/2}+x^2\eta^{1/2}+(1+x)\eta^{1/2}|\ln\eta^{1/2}|+x\Arrowvert\nabla\sigma\Arrowvert_{\infty}^{3/4}\eta^{1/8}\right)\right\}.
\end{align*}
Thus we obtain the upper bound for the tail probability $\P\big(S_\eta>x\big)$ with $x\geq0$.

Next, we estimate the lower bound. Notice that for all $x\geq0$,
\begin{align}\label{P_l}
&\P\big(S_\eta>x\big)\nonumber\\
\geq&\P\big(\eta^{-1/2}(\Pi_\eta(h)-\pi(h))>x\sqrt{\mathcal{V}_\eta},\mathcal{V}_\eta<(1+\varepsilon_x)\mathcal{Y}_\eta\big)\nonumber\\
\geq&\P\big(W_\eta\geq x\sqrt{1+\varepsilon_x},\mathcal{V}_\eta<(1+\varepsilon_x)\mathcal{Y}_\eta\big)\geq\P\big(W_\eta\geq x\sqrt{1+\varepsilon_x}\big)-\P\big(\mathcal{V}_\eta\geq(1+\varepsilon_x)\mathcal{Y}_\eta\big)\nonumber\\
=&\P\big(W_\eta\geq x\sqrt{1+\varepsilon_x}\big)-\P\big(\mathcal{V}_\eta\geq(1+\varepsilon_x)\mathcal{Y}_\eta,\mathcal{Y}_\eta\leq\frac{1}{2}\E\mathcal{Y}_\eta\big)-\P\big(\mathcal{V}_\eta\geq(1+\varepsilon_x)\mathcal{Y}_\eta,\mathcal{Y}_\eta>\frac{1}{2}\E\mathcal{Y}_\eta\big)\nonumber\\
\geq&\P\big(W_\eta\geq x\sqrt{1+\varepsilon_x}\big)-\P\big(\mathcal{Y}_\eta-\E\mathcal{Y}_\eta\leq-\frac{1}{2}\E\mathcal{Y}_\eta\big)-\P\big(\mathcal{V}_\eta-\mathcal{Y}_\eta\geq\frac{1}{2}\varepsilon_x\E\mathcal{Y}_\eta\big)
\nonumber\\:=&P_4-P_5-P_6.
\end{align}
First, by Theorem \ref{W} and \eqref{normal}, we have for all $c\Arrowvert\nabla\sigma\Arrowvert_{\infty}^{3/4}\eta^{1/8}\leq x=o(\Arrowvert\nabla\sigma\Arrowvert_{\infty}^{-3/4}\eta^{-1/8})$,
\begin{align}\label{P_4}
P_4
&\geq\Big(1-\Phi\big(x\sqrt{1+\varepsilon_x}\big)\Big)\exp\left\{-c_3\left(x^3\eta^{1/2}+(1+x)\eta^{1/2}|\ln\eta^{1/2}|+x\Arrowvert\nabla\sigma\Arrowvert_{\infty}^{3/4}\eta^{1/8}\right)\right\}\nonumber\\
&\geq\left(1-\Phi(x)\right)\exp\left\{-c\left(x^2\varepsilon_x+x^3\eta^{1/2}+(1+x)\eta^{1/2}|\ln\eta^{1/2}|+x\Arrowvert\nabla\sigma\Arrowvert_{\infty}^{3/4}\eta^{1/8}\right)\right\}.
\end{align}
Second, using Proposition \ref{EY} with $y=c\eta^{-2}$, we get for all $x\geq0$,
\begin{align}\label{P_5}
P_5
=&\P\big(\mathcal{Y}_\eta-\E\mathcal{Y}_\eta\leq-\frac{1}{2}\E\mathcal{Y}_\eta\big)
\nonumber\\\leq&2e^{-c\eta^{-2}}+c_3\left(1+\eta^{-1}\right)\left(e^{-c_1\eta^{-1}}+\frac{e^{-c_2\eta^{-2}}}{1-e^{-u_1\eta}}\right)\leq c\left(1+\eta^{-1}\right)e^{-c_1\eta^{-1}}.
\end{align}
Third, using Proposition \ref{VY}, we get for all $x\geq0$,
\begin{align}\label{P_6}
P_6
=\P\big(\mathcal{V}_\eta-\mathcal{Y}_\eta\geq\frac{1}{2}\varepsilon_x\E\mathcal{Y}_\eta\big)
\leq c_1\exp\Big\{-\frac{\frac{1}{4}\varepsilon_x^2\eta^{-4}}{c_1\big(\eta^{-2}+c\varepsilon_x\eta^{-2}\big)}\Big\}
\leq c_1\exp\big\{-c\varepsilon_x^2\eta^{-2}\big\}.
\end{align}
Finally, taking $\varepsilon_x=c_0\eta^{1/2}$ with $c_0>0$, by (\ref{P_l})-(\ref{P_6}), we deduce that for all $c\Arrowvert\nabla\sigma\Arrowvert_{\infty}^{3/4}\eta^{1/8}\leq x=o(\Arrowvert\nabla\sigma\Arrowvert_{\infty}^{-3/4}\eta^{-1/8})$,
\begin{align*}
\P\big(S_\eta>x\big)
\geq&\Big(1-\Phi(x)\Big)\exp\left\{-c\left(x^3\eta^{1/2}+x^2\eta^{1/2}+(1+x)\eta^{1/2}|\ln\eta^{1/2}|+x\Arrowvert\nabla\sigma\Arrowvert_{\infty}^{3/4}\eta^{1/8}\right)\right\}\\
&-c_1\exp\left\{-cc_0^2\eta^{-1}\right\}-c\left(1+\eta^{-1}\right)e^{-c_1\eta^{-1}},
\end{align*}
applying (\ref{normal}) to the last inequality, we obtain for all $c\Arrowvert\nabla\sigma\Arrowvert_{\infty}^{3/4}\eta^{1/8}\leq x=o(\Arrowvert\nabla\sigma\Arrowvert_{\infty}^{-3/4}\eta^{-1/8})$,
\begin{align*}
\P\big(S_\eta>x\big)\geq\Big(1-\Phi(x)\Big)\exp\left\{-c\left(x^3\eta^{1/2}+x^2\eta^{1/2}+(1+x)\eta^{1/2}|\ln\eta^{1/2}|+x\Arrowvert\nabla\sigma\Arrowvert_{\infty}^{3/4}\eta^{1/8}\right)\right\}.
\end{align*}
Thus we obtain the lower bound for the tail probability $\P\big(S_\eta>x\big)$ for $x\geq0$. The proof for $-S_\eta$ follows by a similar argument and the proof of Theorem \ref{S} is complete.

\qed

\begin{appendix}

\section{Proof of Proposition \ref{g}}\label{Appendix}

Since $A=\left\{\left|\xi_i\right|\leq \mathbf{R},i=1,\cdots,m\right\}$, it holds that
\begin{align*}
&\P_{\theta_0}\left(\eta\sum_{k=0}^{m-1}\Arrowvert b(\theta_k)\Arrowvert_2^2>x\right)\\
\leq&\P_{\theta_0}\left(\eta\sum_{k=0}^{m-1}\Arrowvert b(\theta_k)\Arrowvert_2^2>x,A\right)+\P_{\theta_0}\left(\eta\sum_{k=0}^{m-1}\Arrowvert b(\theta_k)\Arrowvert_2^2>x,A^c\right)
:=P_1+P_2.
\end{align*}

We first calculate the upper bound of $P_1$. One can write from the Markov inequality
\begin{align*}
P_1
=\P_{\theta_0}\left(\eta\sum_{k=0}^{m-1}\Arrowvert b(\theta_k)\Arrowvert_2^2>x,A\right)
=&\P_{\theta_0}\left(\left(\eta\sum_{k=0}^{m-1}\Arrowvert b(\theta_k)\Arrowvert_2^2\right)I_A>x\right)
\\\leq&\E_{\theta_0}\left(\exp\left\{\left(c\eta\sum_{k=0}^{m-1}\Arrowvert b(\theta_k)\Arrowvert_2^2\right)I_A\right\}\right)\cdot e^{-cx}.
\end{align*}
By (\ref{|g(x)|^2}) and (\ref{|x|^2}), we have
\begin{align*}
&\E_{\theta_0}\left(\exp\left\{\left(c\eta\sum_{k=0}^{m-1}\Arrowvert b(\theta_k)\Arrowvert_2^2\right)I_A\right\}\right)\\
\leq&\E_{\theta_0}\left(\exp\left\{\left(c\eta\sum_{k=0}^{m-1}\left(2L^2\Arrowvert\theta_k\Arrowvert_2^2+2\Arrowvert b(0)\Arrowvert_2^2\right)\right)I_A\right\}\right)\\
\leq&\E_{\theta_0}\left(\exp\left\{\left(c_1\eta\sum_{k=0}^{m-1}\Arrowvert\theta_k\Arrowvert_2^2\right)I_A\right\}\right)\cdot e^{2c\Arrowvert b(0)\Arrowvert_2^2\eta^{-1}}\\
\leq&\E_{\theta_0}\left(\exp\left\{\left(c_1\eta\sum_{k=0}^{m-1}\left(\frac{2}{K_1}C-\frac{2}{K_1}\left<\theta_k,b(\theta_k)\right>\right)\right)I_A\right\}\right)\cdot e^{2c\Arrowvert b(0)\Arrowvert_2^2\eta^{-1}}\\
\leq& e^{c_3\eta^{-1}}\E_{\theta_0}\left(\exp\left\{\left(-c_2\eta\sum_{k=0}^{m-1}\left<\theta_k,b(\theta_k)\right>\right)I_A\right\}\right).
\end{align*}
Since $\theta_{k+1}=\theta_k+\eta b(\theta_k)+\sqrt{\eta}\sigma(\theta_k)\xi_{k+1}+\frac{1}{2}\eta\mathcal{R}(\theta_k,\xi_{k+1}),\;k\geq 0$, it is easy to calculate that
\begin{align}\label{square}
&\Arrowvert\theta_{k+1}\Arrowvert_2^2-\Arrowvert\theta_k\Arrowvert_2^2\nonumber\\
=&2\left<\theta_k,\eta b(\theta_k)\right>+\eta^2\Arrowvert b(\theta_k)\Arrowvert_2^2+2\left<\theta_k+\eta b(\theta_k),\sqrt{\eta}\sigma(\theta_k)\xi_{k+1}\right>+\eta\Arrowvert\sigma(\theta_k)\xi_{k+1}\Arrowvert_2^2\nonumber\\
&+2\left<\theta_k+\eta b(\theta_k)+\sqrt{\eta} \sigma(\theta_k)\xi_{k+1},\frac{1}{2}\eta\mathcal{R}(\theta_k,\xi_{k+1})\right>+\frac{1}{4}\eta^2\Arrowvert\mathcal{R}(\theta_k,\xi_{k+1})\Arrowvert_2^2,
\end{align}
summing the above from $k=0$ to $k=m-1$, we obtain
\begin{align}\label{sum}
&-\sum_{k=0}^{m-1}\left<\theta_k,\eta b(\theta_k)\right>\nonumber\\
\leq&\frac{1}{2}\Arrowvert\theta_0\Arrowvert_2^2+\sum_{k=0}^{m-1}\Big(\frac{1}{2}\eta^2\Arrowvert b(\theta_k)\Arrowvert_2^2+\left<\theta_k+\eta b(\theta_k),\sqrt{\eta} \sigma(\theta_k)\xi_{k+1}\right>+\frac{1}{2}\eta\Arrowvert\sigma(\theta_k)\xi_{k+1}\Arrowvert_2^2\nonumber\\
&+\frac{1}{2}\left<\theta_k+\eta b(\theta_k)+\sqrt{\eta} \sigma(\theta_k)\xi_{k+1},\eta\mathcal{R}(\theta_k,\xi_{k+1})\right>+\frac{1}{8}\eta^2\Arrowvert\mathcal{R}(\theta_k,\xi_{k+1})\Arrowvert_2^2\Big),
\end{align}
which implies
\begin{align*}
&\E_{\theta_0}\left(\exp\left\{\left(-c_2\eta\sum_{k=0}^{m-1}\left<\theta_k,b(\theta_k)\right>\right)I_A\right\}\right)\\
\leq& e^{\frac{1}{2}c_2\Arrowvert\theta_0\Arrowvert_2^2}\cdot\E_{\theta_0}\Bigg(\exp\left\{\Bigg(c_2\sum_{k=0}^{m-1}\Bigg(\frac{1}{2}\eta^2\Arrowvert b(\theta_k)\Arrowvert_2^2+\left<\theta_k+\eta b(\theta_k),\sqrt{\eta} \sigma(\theta_k)\xi_{k+1}\right>+\frac{1}{2}\eta\Arrowvert\sigma(\theta_k)\xi_{k+1}\Arrowvert_2^2\right.\\
&\left.\quad\qquad\qquad+\frac{1}{2}\left<\theta_k+\eta b(\theta_k)+\sqrt{\eta} \sigma(\theta_k)\xi_{k+1},\eta\mathcal{R}(\theta_k,\xi_{k+1})\right>+\frac{1}{8}\eta^2\Arrowvert\mathcal{R}(\theta_k,\xi_{k+1})\Arrowvert_2^2\Bigg)\Bigg)I_A\right\}\Bigg).
\end{align*}
By Lemma \ref{E_k} with $\Psi_1(\theta_k)=c_2\left(\sqrt{\eta}\theta_k+\eta^{\frac{3}{2}}b(\theta_k)\right)$, we have
\begin{align*}
&\E_{\theta_0}\exp\left\{\Bigg(c_2\sum_{k=0}^{m-1}\Bigg(\frac{1}{2}\eta^2\Arrowvert b(\theta_k)\Arrowvert_2^2+\left<\theta_k+\eta b(\theta_k),\sqrt{\eta} \sigma(\theta_k)\xi_{k+1}\right>+\frac{1}{2}\eta\Arrowvert\sigma(\theta_k)\xi_{k+1}\Arrowvert_2^2\right.\\
&\left.\qquad\qquad\quad+\frac{1}{2}\left<\theta_k+\eta b(\theta_k)+\sqrt{\eta} \sigma(\theta_k)\xi_{k+1},\eta\mathcal{R}(\theta_k,\xi_{k+1})\right>+\frac{1}{8}\eta^2\Arrowvert\mathcal{R}(\theta_k,\xi_{k+1})\Arrowvert_2^2\Bigg)\Bigg)I_A\right\}\\
\leq&\Bigg(\E_{\theta_0}\exp\left\{\Bigg(2c_2\sum_{k=0}^{m-1}\Bigg(\eta\Arrowvert\theta_k+\eta b(\theta_k)\Arrowvert_2^2\cdot\Arrowvert\sigma\Arrowvert_{\infty}^2+\frac{1}{2}\eta^2\Arrowvert b(\theta_k)\Arrowvert_2^2+\frac{1}{2}\eta\Arrowvert\sigma(\theta_k)\xi_{k+1}\Arrowvert_2^2\right.\\
&\left.\quad\qquad\qquad+\frac{1}{2}\left<\theta_k+\eta b(\theta_k)+\sqrt{\eta} \sigma(\theta_k)\xi_{k+1},\eta\mathcal{R}(\theta_k,\xi_{k+1})\right>+\frac{1}{8}\eta^2\Arrowvert\mathcal{R}(\theta_k,\xi_{k+1})\Arrowvert_2^2\Bigg)\Bigg)I_A\right\}\Bigg)^{\frac{1}{2}}\\
\leq&\Bigg(\E_{\theta_0}\exp\left\{\Bigg(2c_2\sum_{k=0}^{m-1}\Bigg(\eta\Arrowvert\theta_k+\eta b(\theta_k)\Arrowvert_2^2\cdot\Arrowvert\sigma\Arrowvert_{\infty}^2+\frac{1}{2}\eta^2\Arrowvert b(\theta_k)\Arrowvert_2^2+\frac{1}{4}\eta\Arrowvert\theta_k+\eta b(\theta_k)\Arrowvert_2^2\right.\\
&\left.\qquad\qquad\qquad\qquad+\frac{1}{4}\eta\Arrowvert\mathcal{R}(\theta_k,\xi_{k+1})\Arrowvert_2^2+\frac{3}{4}\eta\Arrowvert\sigma(\theta_k)\xi_{k+1}\Arrowvert_2^2+\frac{3}{8}\eta^2\Arrowvert\mathcal{R}(\theta_k,\xi_{k+1})\Arrowvert_2^2\Bigg)\Bigg)I_A\right\}\Bigg)^{\frac{1}{2}}\\
\leq&\left(\E_{\theta_0}\exp\left\{\Bigg(4c_2\sum_{k=0}^{m-1}\Bigg(\eta\Arrowvert\theta_k+\eta b(\theta_k)\Arrowvert_2^2\cdot\Arrowvert\sigma\Arrowvert_{\infty}^2+\frac{1}{2}\eta^2\Arrowvert b(\theta_k)\Arrowvert_2^2+\frac{1}{4}\eta\Arrowvert\theta_k+\eta b(\theta_k)\Arrowvert_2^2\Bigg)\Bigg)I_A\right\}\right)^{\frac{1}{4}}\\
&\cdot\left(\E_{\theta_0}\exp\left\{6c_2\sum_{k=0}^{m-1}\eta\Arrowvert\sigma(\theta_k)\xi_{k+1}\Arrowvert_2^2\right\}\right)^{\frac{1}{8}}\cdot\left(\E_{\theta_0}\exp\left\{5c_2\sum_{k=0}^{m-1}\eta\Arrowvert\mathcal{R}(\theta_k,\xi_{k+1})\Arrowvert_2^2I_{\{|\xi_{k+1}|\leq \mathbf{R}\}}\right\}\right)^{\frac{1}{8}},
\end{align*}
where the last inequality follows from H\"older's inequality. For the first expectation, since
\begin{align*}
&\eta\Arrowvert\theta_k+\eta b(\theta_k)\Arrowvert_2^2\cdot\Arrowvert\sigma\Arrowvert_{\infty}^2+\frac{1}{2}\eta^2\Arrowvert b(\theta_k)\Arrowvert_2^2+\frac{1}{4}\eta\Arrowvert\theta_k+\eta b(\theta_k)\Arrowvert_2^2\\    \leq&2\Arrowvert\sigma\Arrowvert_{\infty}^2\eta\Arrowvert\theta_k\Arrowvert_2^2+2\Arrowvert\sigma\Arrowvert_{\infty}^2\eta^3\Arrowvert b(\theta_k)\Arrowvert_2^2+\frac{1}{2}\eta^2\Arrowvert b(\theta_k)\Arrowvert_2^2+\frac{1}{2}\eta\Arrowvert\theta_k\Arrowvert_2^2+\frac{1}{2}\eta^3\Arrowvert b(\theta_k)\Arrowvert_2^2\\
\leq& c_1\eta\Arrowvert\theta_k\Arrowvert_2^2+c_2\eta^2\Arrowvert b(\theta_k)\Arrowvert_2^2\leq c_1\eta\Arrowvert\theta_k\Arrowvert_2^2+c_2\eta^2\left(2L^2\Arrowvert\theta_k\Arrowvert_2^2+2\Arrowvert b(0)\Arrowvert_2^2\right)\\
\leq& c_1\eta\Arrowvert\theta_k\Arrowvert_2^2+c_3\eta^2\Arrowvert b(0)\Arrowvert_2^2,
\end{align*}
we obtain
\begin{align}
&\left(\E_{\theta_0}\exp\left\{\Bigg(4c_2\sum_{k=0}^{m-1}\Bigg(\eta\Arrowvert\theta_k+\eta b(\theta_k)\Arrowvert_2^2\cdot\Arrowvert\sigma\Arrowvert_{\infty}^2+\frac{1}{2}\eta^2\Arrowvert b(\theta_k)\Arrowvert_2^2+\frac{1}{4}\eta\Arrowvert\theta_k+\eta b(\theta_k)\Arrowvert_2^2\Bigg)\Bigg)I_A\right\}\right)^{\frac{1}{4}}\nonumber\\
\leq&\left(\E_{\theta_0}\exp\left\{\Bigg(4c_2\sum_{k=0}^{m-1}\Bigg(c_1\eta\Arrowvert\theta_k\Arrowvert_2^2+c_3\eta^2\Arrowvert b(0)\Arrowvert_2^2\Bigg)\Bigg)I_A\right\}\right)^{\frac{1}{4}}\nonumber\\
\leq &c_4\left(\E_{\theta_0}\exp\left\{\left(c_1\sum_{k=0}^{m-1}\eta\Arrowvert\theta_k\Arrowvert_2^2\right)I_A\right\}\right)^{\frac{1}{4}}.
\end{align}
For the second expectation, we take some $\eta_0$ such that $1-2c\eta\Arrowvert\sigma\Arrowvert_{\infty}^2>0$ for any $\eta<\eta_0$,
\begin{align*}
\E_{\theta_0}\exp\left\{6c_2\sum_{k=0}^{m-1}\eta\Arrowvert\sigma(\theta_k)\xi_{k+1}\Arrowvert_2^2\right\}
\leq&\E\exp\left\{c\sum_{k=0}^{m-1}\eta\Arrowvert\sigma\Arrowvert_{\infty}^2\Arrowvert\xi_{k+1}\Arrowvert_2^2\right\}\\
=&\left(\int_{-\infty}^{\infty}\frac{1}{\sqrt{2\pi}}\exp\left\{c\eta\Arrowvert\sigma\Arrowvert_{\infty}^2x^2-\frac{1}{2}x^2\right\}\,\mathrm{d}x\right)^{md}\\
=&\left(1-2c\eta\Arrowvert\sigma\Arrowvert_{\infty}^2\right)^{-\frac{1}{2}md}\leq e^{c\Arrowvert\sigma\Arrowvert_{\infty}^2\eta^{-1}}.
\end{align*}
For the third expectation, choose $\mathbf{R}<c\Arrowvert\nabla\sigma\Arrowvert^{-1}_{\infty}\eta^{-\frac{1}{2}}$, then
$$1-2c\eta\Arrowvert\sigma\Arrowvert_{\infty}^2\Arrowvert\nabla\sigma\Arrowvert_{\infty}^2\left(\mathbf{R}^2+1\right)>0.$$
Hence,
\begin{align*}
&\E_{\theta_0}\left(\exp\left\{5c_2\sum_{k=0}^{m-1}\eta\Arrowvert\mathcal{R}(\theta_k,\xi_{k+1})\Arrowvert_2^2I_{\{|\xi_{k+1}|\leq \mathbf{R}\}}\right\}\right)\\
\leq&\E\left(\exp\left\{c\sum_{k=0}^{m-1}\eta\Arrowvert\sigma\Arrowvert_{\infty}^2\Arrowvert\nabla\sigma\Arrowvert_{\infty}^2\left(\Arrowvert\xi_{k+1}\Arrowvert_2^4+1\right)I_{\{|\xi_{k+1}|\leq \mathbf{R}\}}\right\}\right)\\
\leq&\E\left(\exp\left\{c\sum_{k=0}^{m-1}\eta\Arrowvert\sigma\Arrowvert_{\infty}^2\Arrowvert\nabla\sigma\Arrowvert_{\infty}^2\left(\mathbf{R}^2+1\right)\left(\Arrowvert\xi_{k+1}\Arrowvert_2^2+1\right)I_{\{|\xi_{k+1}|\leq \mathbf{R}\}}\right\}\right)\\
\leq&\left(\E\exp\left\{c\eta\Arrowvert\sigma\Arrowvert_{\infty}^2\Arrowvert\nabla\sigma\Arrowvert_{\infty}^2\left(\mathbf{R}^2+1\right)\left(\Arrowvert\xi_{k+1}\Arrowvert_2^2+1\right)\right\}\right)^m\\
=&\left(e^{c\eta\Arrowvert\sigma\Arrowvert_{\infty}^2\Arrowvert\nabla\sigma\Arrowvert_{\infty}^2\left(\mathbf{R}^2+1\right)}\int_{-\infty}^{\infty}\frac{1}{\sqrt{2\pi}}e^{-\frac{1-2c\eta\Arrowvert\sigma\Arrowvert_{\infty}^2\Arrowvert\nabla\sigma\Arrowvert_{\infty}^2\left(\mathbf{R}^2+1\right)}{2}x^2}\,\mathrm{d}x\right)^{md}\\
=& e^{cd\eta^{-1}\Arrowvert\sigma\Arrowvert_{\infty}^2\Arrowvert\nabla\sigma\Arrowvert_{\infty}^2\left(\mathbf{R}^2+1\right)}\left(1-2c\eta\Arrowvert\sigma\Arrowvert_{\infty}^2\Arrowvert\nabla\sigma\Arrowvert_{\infty}^2\left(\mathbf{R}^2+1\right)\right)^{-\frac{1}{2}md}
\leq e^{c\eta^{-1}\Arrowvert\sigma\Arrowvert_{\infty}^2\Arrowvert\nabla\sigma\Arrowvert_{\infty}^2\left(\mathbf{R}^2+1\right)}.
\end{align*}
Hence, for $\eta<\eta_0$ small enough and $\mathbf{R}<c\Arrowvert\nabla\sigma\Arrowvert^{-1}_{\infty}\eta^{-\frac{1}{2}}$, we have
\begin{align*}
&\E_{\theta_0}\left(\exp\left\{\left(c_1\eta\sum_{k=0}^{m-1}\Arrowvert\theta_k\Arrowvert_2^2\right)I_A\right\}\right)\\
\leq& c_4\left(\E_{\theta_0}\exp\left\{\left(c_1\eta\sum_{k=0}^{m-1}\Arrowvert\theta_k\Arrowvert_2^2\right)I_A\right\}\right)^{\frac{1}{4}}\cdot e^{\frac{1}{2}c_2\Arrowvert\theta_0\Arrowvert^2}\cdot e^{c\Arrowvert\sigma\Arrowvert_{\infty}^2\eta^{-1}\left(1+\Arrowvert\nabla\sigma\Arrowvert_{\infty}^2\left(\mathbf{R}^2+1\right)\right)},
\end{align*}
which implies that
\begin{align*}
\E_{\theta_0}\left(\exp\left\{\left(c_1\eta\sum_{k=0}^{m-1}\Arrowvert\theta_k\Arrowvert_2^2\right)I_A\right\}\right)\leq Ce^{c_1\Arrowvert\theta_0\Arrowvert_2^2}\cdot e^{c_2\Arrowvert\sigma\Arrowvert_{\infty}^2\eta^{-1}\left(1+\Arrowvert\nabla\sigma\Arrowvert_{\infty}^2\left(\mathbf{R}^2+1\right)\right)}.
\end{align*}
This leads to
\begin{align}\label{Etheta0}
\E_{\theta_0}\left(\exp\left\{\left(c\eta\sum_{k=0}^{m-1}\Arrowvert b(\theta_k)\Arrowvert_2^2\right)I_A\right\}\right)
&\leq\E_{\theta_0}\left(\exp\left\{\left(c_1\eta\sum_{k=0}^{m-1}\Arrowvert\theta_k\Arrowvert_2^2\right)I_A\right\}\right)\cdot e^{2c\Arrowvert b(0)\Arrowvert_2^2\eta^{-1}}\nonumber\\
&\leq Ce^{c_1\Arrowvert\theta_0\Arrowvert_2^2}\cdot e^{c_2\Arrowvert\sigma\Arrowvert_{\infty}^2\eta^{-1}\left(1+\Arrowvert\nabla\sigma\Arrowvert_{\infty}^2\left(\mathbf{R}^2+1\right)\right)},
\end{align}
and then
\begin{align}\label{P1}
P_1
\leq&\E_{\theta_0}\left(\exp\left\{\left(c\eta\sum_{k=0}^{m-1}\Arrowvert b(\theta_k)\Arrowvert_2^2\right)I_A\right\}\right)\cdot e^{-cx}\nonumber\\
\leq &Ce^{c_1\Arrowvert\theta_0\Arrowvert
_2^2}\cdot e^{c_2\Arrowvert\sigma\Arrowvert_{\infty}^2\eta^{-1}\left(1+\Arrowvert\nabla\sigma\Arrowvert_{\infty}^2\left(\mathbf{R}^2+1\right)\right)}e^{-c_3x}.
\end{align}

Next, for $P_2$, we have
\begin{align*}
P_2
=\P_{\theta_0}\left(\eta\sum_{k=0}^{m-1}\Arrowvert b(\theta_k)\Arrowvert_2^2>x,A^c\right)\leq\P\left(A^c\right)=\P\left(\max_{1\leq i\leq m}|\xi_i|>\mathbf{R}\right)=1-\prod_{i=1}^m\P\left(|\xi_i|^2\leq \mathbf{R}^2\right).
\end{align*}
Since $|\xi_{i}|^2\sim\chi^2(d)$, we can obtain $P\left(|\xi_i|^2> \mathbf{R}^2\right)\leq (1+\frac{\mathbf{R}^2}{d})^{-\frac{d}{2}}e^{-\frac{\mathbf{R}^2}{2}}$, thus
\begin{align*}
\P\left(|\xi_i|^2\leq \mathbf{R}^2\right)=1-\P\left(|\xi_i|^2> \mathbf{R}^2\right)\geq 1-\left(1+\frac{\mathbf{R}^2}{d}\right)^{-\frac{d}{2}}e^{-\frac{\mathbf{R}^2}{2}}.
\end{align*}
Choose $\mathbf{R}>c_1\eta^{-a}$ for any $a>0$. Hence we have
\begin{align}\label{P2}
P_2
\leq 1-\prod_{i=1}^m\P\left(|\xi_i|^2\leq \mathbf{R}^2\right)
\leq 1-\left(1-\left(1+\frac{\mathbf{R}^2}{d}\right)^{-\frac{d}{2}}e^{-\frac{\mathbf{R}^2}{2}}\right)^m\leq Ce^{-\frac{\mathbf{R}^2}{2}}.
\end{align}
By \eqref{P1} and \eqref{P2}, we have \eqref{gP0}.

Finally, with $\theta_0$ constrained by {\bf Assumption} {\bf\ref{conditions2}}, we can obtain by \eqref{Etheta0} that
\begin{align*}
    \E\left(\exp\left\{\left(c\eta\sum_{k=0}^{m-1}\Arrowvert b(\theta_k)\Arrowvert_2^2\right)I_A\right\}\right)=&\E\left(\E_{\theta_0}\left(\exp\left\{\left(c\eta\sum_{k=0}^{m-1}\Arrowvert b(\theta_k)\Arrowvert_2^2\right)I_A\right\}\right)\right)\\\leq &c_1 e^{c_2\Arrowvert\sigma\Arrowvert_{\infty}^2\eta^{-1}\left(1+\Arrowvert\nabla\sigma\Arrowvert_{\infty}^2\left(\mathbf{R}^2+1\right)\right)}.
\end{align*}
Following the same procedures as for $\P_{\theta_0}\left(\eta\sum_{k=0}^{m-1}\Arrowvert b(\theta_k)\Arrowvert_2^2>x\right)$, we can get \eqref{gP}. 

\section{Proof of Proposition \ref{remainder}}\label{AppendixB}
We now give some estimations for the tail probability $\P\big(|\mathcal{R}_\eta|\geq y\big)$ for $0<y=o(\eta^{-1/2})$. Clearly, it holds
\begin{align*}
\P\big(|\mathcal{R}_\eta|\geq y\big)\leq\sum_{i=1}^{13}\P\big(|\mathcal{R}_{\eta,i}|\geq y/13\big)=:\sum_{i=1}^{13} I_i.
\end{align*}

Then we give estimates for $I_i,i=1,2,\cdots,13$.

\textbf{(1) Estimation of $I_1$.} By the boundedness of $\Arrowvert f_h\Arrowvert_2$, as established in \eqref{boundedness}, we have $|\mathcal{R}_{\eta}|\leq c_1\eta^{1/2}$, and thus for all $y> c\eta^{1/2}$,
$$
\P\big(\lvert\mathcal{R}_{\eta,1}\rvert\geq y/13\big)\leq e^{c_1^2}e^{-y^2\eta^{-1}}.
$$

\textbf{(2) Estimation of $I_2$.} Denote $\zeta_{k+1}=\left<\nabla^2f_h,(\sigma(\theta_k)\xi_{k+1})(\sigma(\theta_k)\xi_{k+1})^{\rm T}-\sigma(\theta_k)\sigma(\theta_k)^{\rm T}\right>_{\rm HS}$. Then it is easy to see that $\left(\zeta_i,\mathcal{F}_i\right)_{i\geq 1}$ is a sequence of martingale differences, and by \eqref{boundedness}, we have
\begin{align*}
    |\zeta_{k+1}|
    &\leq \Arrowvert\nabla^2f_h(\theta_k)\Arrowvert\cdot\Arrowvert(\sigma(\theta_k)\xi_{k+1})(\sigma(\theta_k)\xi_{k+1})^{\rm T}-\sigma(\theta_k)\sigma(\theta_k)^{\rm T}\Arrowvert\\
    &\leq c\left(1+\Arrowvert\xi_{k+1}\Arrowvert_2^2\right),
\end{align*}
where $c=\Arrowvert\nabla^2f_h(\theta_k)\Arrowvert\cdot\Arrowvert\sigma\Arrowvert_{\infty}^2$ and which implies that there exists some positive constant $c$ such that
$$
\E\left(|\zeta_{k+1}|^2\exp\left\{c|\zeta_{k+1}|\right\}|\mathcal{F}_k\right)<\infty.
$$
Therefore, using Proposition \ref{martingale differences} with $\alpha=1$, we have for all $y>c\eta^{1/2}$,
\begin{align*}
    I_2&=\P\left(\left|\frac{\eta^{\frac{3}{2}}}{2}\sum_{k=0}^{m-1}\left<\nabla^2f_h(\theta_k),\left(\sigma(\theta_k)\xi_{k+1}\right)\left(\sigma(\theta_k)\xi_{k+1}\right)^{\rm T}-\sigma(\theta_k)\sigma(\theta_k)^{\rm T}\right>_{\rm{HS}}\right|\geq y/13\right)\\
    &=\P\left(\left|\sum_{k=0}^{m-1}\zeta_{k+1}\right|\geq \frac{2}{13}y\eta^{-3/2}\right)\leq c_1\exp\left\{-\frac{\left(y\eta^{-3/2}\right)^2}{c_2\left(\eta^{-2}+y\eta^{-3/2}\right)}\right\}\\
    &\leq c_1\exp\left\{-\frac{y^2\eta^{-1}}{c_2(1+\eta^{1/2}y)}\right\}.
\end{align*}

\textbf{(3) Estimation of $I_3$.} According to $A=\left\{\left|\xi_i\right|\leq \mathbf{R},i=1,\cdots,m\right\}$, we have
\begin{align*}
\P\left(\mathcal{R}_{\eta,3}\geq y/13\right)
\leq\P\left(\mathcal{R}_{\eta,3}\geq y/13,A\right)+\P\left(A^c\right)
\leq\P\left(\mathcal{R}_{\eta,3}\geq y/13,A\right)+Ce^{-\frac{\mathbf{R}^2}{2}}.
\end{align*}
For $\P\left(\mathcal{R}_{\eta,3}\geq y/13,A\right)$, applying Markov inequality,
\begin{align*}
&\P\left(\mathcal{R}_{\eta,3}\geq y/13,A\right)\\
\leq&\E\exp\left\{\left(\sum_{k=0}^{m-1}\left<(c\eta)^{\frac{1}{2}}\left(\left(\nabla^2f_h(\theta_k)\right)^{\rm T}+\nabla^2f_h(\theta_k)\right)b(\theta_k),\sigma(\theta_k)\xi_{k+1}\right>\right) I_A\right\}e^{-cy\eta^{-\frac{3}{2}}}.
\end{align*}
Taking $\Psi_1(\theta_k)=(c\eta)^{\frac{1}{2}}\left(\left(\nabla^2f_h(\theta_k)\right)^{\rm T}+\nabla^2f_h(\theta_k)\right)b(\theta_k)$ and $\Psi_2(\theta_k,\xi_{k+1})=0$, we obtain by Lemma \ref{E_k} and Proposition \ref{g},
\begin{align*}
&\E\exp\left\{\left(\sum_{k=0}^{m-1}\left<(c\eta)^{\frac{1}{2}}\left(\left(\nabla^2f_h(\theta_k)\right)^{\rm T}+\nabla^2f_h(\theta_k)\right)b(\theta_k),\sigma(\theta_k)\xi_{k+1}\right>\right)I_A\right\}\\
&\leq\left(\E\exp\left\{\left(\sum_{k=0}^{m-1}2c\eta\left\Arrowvert\left(\left(\nabla^2f_h(\theta_k)\right)^{\rm T}+\nabla^2f_h(\theta_k)\right)b(\theta_k)\right\Arrowvert_2^2\Arrowvert\sigma\Arrowvert_{\infty}^2\right)I_A\right\}\right)^{\frac{1}{2}}\\
&\leq\left(\E\exp\left\{\left(\sum_{k=0}^{m-1}c\eta\Arrowvert b(\theta_k)\Arrowvert_2^2\right)I_A\right\}\right)^{\frac{1}{2}}\leq c_1e^{c_2\Arrowvert\sigma\Arrowvert_{\infty}^2\eta^{-1}\left(1+\Arrowvert\nabla\sigma\Arrowvert_{\infty}^2(\mathbf{R}^2+1)\right)}.
\end{align*}
Thus we have
\begin{align*}
\P\left(\mathcal{R}_{\eta,3}\geq y/13\right)
\leq c_1e^{c_2\Arrowvert\sigma\Arrowvert_{\infty}^2\eta^{-1}\left(1+\Arrowvert\nabla\sigma\Arrowvert_{\infty}^2(\mathbf{R}^2+1)\right)}e^{-c_3y\eta^{-\frac{3}{2}}}+Ce^{-\frac{\mathbf{R}^2}{2}}.
\end{align*}

By the same argument, we obtain the same bound for $\P\left(\mathcal{R}_{\eta,3}\leq -y/13\right)$. Hence for $\mathbf{R}$ satisfying \eqref{R} and $y>\max\left\{c\eta^{1/2},\eta^{\frac{1}{2}}\mathbf{R}^2\Arrowvert\nabla\sigma\Arrowvert_{\infty}^2\right\}$, we have
\begin{align*}
I_3=\P\left(\left|\mathcal{R}_{\eta,3}\right|\geq y/13\right)
\leq c_1e^{c_2\Arrowvert\sigma\Arrowvert_{\infty}^2\eta^{-1}\left(1+\Arrowvert\nabla\sigma\Arrowvert_{\infty}^2(\mathbf{R}^2+1)\right)}e^{-c_3y\eta^{-\frac{3}{2}}}+Ce^{-\frac{\mathbf{R}^2}{2}}.
\end{align*}

\textbf{(4) Estimation of $I_4$}. For all $y>c\eta^{\frac{1}{2}}$,
\begin{align*}
&\P\left(\left|\mathcal{R}_{\eta,4}\right|\geq y/13\right)\\
=&\P\left(\left|\frac{\eta^2}{2}\sum_{k=0}^{m-1}\int_0^1\left(1-t\right)^2\nabla_{\sigma(\theta_k)\xi_{k+1}}\nabla_{\sigma(\theta_k)\xi_{k+1}}\nabla_{\sigma(\theta_k)\xi_{k+1}}f_h(\theta_k+t\triangle\theta_k)\mathrm{d}t\right|\geq y/13\right)\\
\leq&\P\Bigg(\left|\sum_{k=0}^{m-1}\int_0^1\nabla_{\sigma(\theta_k)\xi_{k+1}}\nabla_{\sigma(\theta_k)\xi_{k+1}}\nabla_{\sigma(\theta_k)\xi_{k+1}}f_h(\theta_k+t\triangle\theta_k)\right.\\&\left.\qquad\qquad\qquad\qquad\qquad\qquad-\nabla_{\sigma(\theta_k)\xi_{k+1}}\nabla_{\sigma(\theta_k)\xi_{k+1}}\nabla_{\sigma(\theta_k)\xi_{k+1}}f_h(\theta_k)\mathrm{d}t\right|\geq \frac{y\eta^{-2}}{13}\Bigg)\\
&+\P\left(\left|\sum_{k=0}^{m-1}\int_0^1\nabla_{\sigma(\theta_k)\xi_{k+1}}\nabla_{\sigma(\theta_k)\xi_{k+1}}\nabla_{\sigma(\theta_k)\xi_{k+1}}f_h(\theta_k)\mathrm{d}t\right|\geq \frac{y\eta^{-2}}{13}\right)\\
:=&I_{\eta,4,1}+I_{\eta,4,2}.
\end{align*}

For $I_{\eta,4,1}$, by \eqref{boundedness} and the fact $\triangle\theta_k=\eta b(\theta_k)+\sqrt{\eta}\sigma(\theta_k)\xi_{k+1}+\frac{1}{2}\eta\mathcal{R}(\theta_k,\xi_{k+1})$,  we get
\begin{align*}
I_{\eta,4,1}
=&\P\left(\left|\sum_{k=0}^{m-1}\int_0^1\int_0^1\nabla_{t\triangle\theta_k}\nabla_{\sigma(\theta_k)\xi_{k+1}}\nabla_{\sigma(\theta_k)\xi_{k+1}}\nabla_{\sigma(\theta_k)\xi_{k+1}}f_h(\theta_k+tt^{\prime}\triangle\theta_k)\mathrm{d}t^{\prime}\mathrm{d}t\right|\geq cy\eta^{-2}\right)\\
\leq&\P\left(\sum_{k=0}^{m-1}\Arrowvert\eta b(\theta_k)+\sqrt{\eta}\sigma(\theta_k)\xi_{k+1}+\frac{1}{2}\eta\mathcal{R}(\theta_k,\xi_{k+1})\Arrowvert_2\Arrowvert\sigma(\theta_k)\xi_{k+1}\Arrowvert_2^3\geq 3c_1y\eta^{-2}\right)\\
\leq&\P\left(\sum_{k=0}^{m-1}\sqrt{\eta}\Arrowvert\sigma(\theta_k)\xi_{k+1}\Arrowvert_2^4\geq c_1y\eta^{-2}\right)+\P\left(\sum_{k=0}^{m-1}\Arrowvert\eta b(\theta_k)\Arrowvert_2\Arrowvert\sigma(\theta_k)\xi_{k+1}\Arrowvert_2^3\geq c_1y\eta^{-2}\right)\\
&+\P\left(\sum_{k=0}^{m-1}\Arrowvert\eta\mathcal{R}(\theta_k,\xi_{k+1})\Arrowvert_2\Arrowvert\sigma(\theta_k)\xi_{k+1}\Arrowvert_2^3\geq 2c_1y\eta^{-2}\right):=I_{\eta,4,1}^{\prime}+I_{\eta,4,1}^{\prime\prime}+I_{\eta,4,1}^{\prime\prime\prime}.
\end{align*}
First, estimate $I_{\eta,4,1}^{\prime}$. In fact
\begin{align*}
I_{\eta,4,1}^{\prime}
&=\P\left(\sum_{k=0}^{m-1}\sqrt{\eta}\Arrowvert\sigma(\theta_k)\xi_{k+1}\Arrowvert_2^4\geq c_1y\eta^{-2}\right)
=\P\left(\sum_{k=0}^{m-1}\Arrowvert\sigma(\theta_k)\xi_{k+1}\Arrowvert_2^4\geq c_1y\eta^{-\frac{5}{2}}\right)\\
&=\P\left(\sum_{k=0}^{m-1}\left(\Arrowvert\sigma(\theta_k)\xi_{k+1}\Arrowvert_2^4-\E\Arrowvert\sigma(\theta_k)\xi_{k+1}\Arrowvert_2^4\right)\geq \big(c_1y\eta^{-\frac{5}{2}}-\sum_{k=0}^{m-1}\E\Arrowvert\sigma(\theta_k)\xi_{k+1}\Arrowvert_2^4\big)\right).
\end{align*}
Note that there exists some positive constant $c$ such that
\begin{align*}
\E\left(\left(\Arrowvert\sigma(\theta_k)\xi_{k+1}\Arrowvert_2^4-\E\Arrowvert\sigma(\theta_k)\xi_{k+1}\Arrowvert_2^4\right)^2\exp\left\{c\left|\Arrowvert\sigma(\theta_k)\xi_{k+1}\Arrowvert_2^4-\E\Arrowvert\sigma(\theta_k)\xi_{k+1}\Arrowvert_2^4\right|^{1/2}\right\}\bigg|\mathcal{F}_k\right)<\infty.
\end{align*}
Using Proposition \ref{martingale differences} with $\alpha=1/2$, we have for all $y>c\eta^{1/2}$,
\begin{align*}
I_{\eta,4,1}^{\prime}
&\leq c\exp\left\{-\frac{\left(y\eta^{-5/2}\right)^2}{c_1\left(\eta^{-2}+\left(y\eta^{-5/2}\right)^{3/2}\right)}\right\}\leq ce^{-c_2y^{1/2}\eta^{-5/4}}.
\end{align*}
Second, we estimate $I_{\eta,4,1}^{\prime\prime}$. Using H\"older's inequality, we get for all $y>c\eta^{1/2}$,
\begin{align*}
&I_{\eta,4,1}^{\prime\prime}
=\P\left(\sum_{k=0}^{m-1}\Arrowvert\eta b(\theta_k)\Arrowvert_2\Arrowvert\sigma(\theta_k)\xi_{k+1}\Arrowvert_2^3\geq c_1y\eta^{-2}\right)\\
\leq&\P\left(\eta\left(\sum_{k=0}^{m-1}\Arrowvert b(\theta_k)\Arrowvert_2^2\right)^{\frac{1}{2}}\left(\sum_{k=0}^{m-1}\Arrowvert\sigma(\theta_k)\xi_{k+1}\Arrowvert_2^6\right)^{\frac{1}{2}}\geq c_1y\eta^{-2}\right)\\
\leq&\P\left(\eta\left(\sum_{k=0}^{m-1}\Arrowvert b(\theta_k)\Arrowvert_2^2\right)^{\frac{1}{2}}\left(\sum_{k=0}^{m-1}\Arrowvert\sigma(\theta_k)\xi_{k+1}\Arrowvert_2^6\right)^{\frac{1}{2}}\geq c_1y\eta^{-2},\eta\sum_{k=0}^{m-1}\Arrowvert b(\theta_k)\Arrowvert_2^2\geq Cy^{\frac{2}{3}}\eta^{-\frac{4}{3}}\right)\\
&+\P\left(\eta\left(\sum_{k=0}^{m-1}\Arrowvert b(\theta_k)\Arrowvert_2^2\right)^{\frac{1}{2}}\left(\sum_{k=0}^{m-1}\Arrowvert\sigma(\theta_k)\xi_{k+1}\Arrowvert_2^6\right)^{\frac{1}{2}}\geq c_1y\eta^{-2},\eta\sum_{k=0}^{m-1}\Arrowvert b(\theta_k)\Arrowvert_2^2< Cy^{\frac{2}{3}}\eta^{-\frac{4}{3}}\right)\\
\leq&\P\left(\eta\sum_{k=0}^{m-1}\Arrowvert b(\theta_k)\Arrowvert_2^2\geq Cy^{\frac{2}{3}}\eta^{-\frac{4}{3}}\right)+\P\left(\sum_{k=0}^{m-1}\Arrowvert\sigma(\theta_k)\xi_{k+1}\Arrowvert_2^6\geq c_1y^{\frac{4}{3}}\eta^{-\frac{11}{3}}\right).
\end{align*}
Since there exists some positive constant c such that
\begin{align*}
\E\left(\left(\Arrowvert\sigma(\theta_k)\xi_{k+1}\Arrowvert_2^6-\E\Arrowvert\sigma(\theta_k)\xi_{k+1}\Arrowvert_2^6\right)^2\exp\left\{c\left|\Arrowvert\sigma(\theta_k)\xi_{k+1}\Arrowvert_2^6-\E\Arrowvert\sigma(\theta_k)\xi_{k+1}\Arrowvert_2^6\right|^{1/3}\right\}\bigg|\mathcal{F}_k\right)<\infty,
\end{align*}
using Proposition \ref{martingale differences} with $\alpha=1/3$,  we have for all $y>c\eta^{1/2}$,
\begin{align*}
&\P\left(\sum_{k=0}^{m-1}\Arrowvert\sigma(\theta_k)\xi_{k+1}\Arrowvert_2^6\geq c_1y^{\frac{4}{3}}\eta^{-\frac{11}{3}}\right)\\
=&\P\left(\sum_{k=0}^{m-1}\left(\Arrowvert\sigma(\theta_k)\xi_{k+1}\Arrowvert_2^6-\E\Arrowvert\sigma(\theta_k)\xi_{k+1}\Arrowvert_2^6\right)\geq \big(c_1y^{\frac{4}{3}}\eta^{-\frac{11}{3}}-\sum_{k=0}^{m-1}\E\Arrowvert\sigma(\theta_k)\xi_{k+1}\Arrowvert_2^6\big)\right)\\
\leq& c\exp\left\{-\frac{\left(y^{4/3}\eta^{-11/3}\right)^2}{c_1\left(\eta^{-2}+\left(y^{4/3}\eta^{-11/3}\right)^{5/3}\right)}\right\}\leq ce^{-c_2y^{4/9}\eta^{-11/9}}.
\end{align*}
For any $a>0$ and $\mathbf{R}$ satisfying
\begin{align}\label{R2}
    c_1\eta^{-a}<\mathbf{R}<c_2\Arrowvert\nabla\sigma\Arrowvert_{\infty}^{-1}\eta^{-\frac{1}{3}},
\end{align}
together with Proposition \ref{g}, we deduce that for all $y>\max\left\{c\eta^{1/2},\eta^{1/2}\mathbf{R}^3\Arrowvert\nabla\sigma\Arrowvert_{\infty}^3\right\}$,
\begin{align*}
I_{\eta,4,1}^{\prime\prime}
\leq c_1e^{c_2\Arrowvert\sigma\Arrowvert_{\infty}^2\eta^{-1}\left(1+\Arrowvert\nabla\sigma\Arrowvert_{\infty}^2(\mathbf{R}^2+1)\right)}e^{-c_3y^{\frac{2}{3}}\eta^{-\frac{4}{3}}}+c_4e^{-\frac{\mathbf{R}^2}{2}}+ce^{-c_5y^{4/9}\eta^{-11/9}}.
\end{align*}
Third, we give an estimation for $I_{\eta,4,1}^{\prime\prime\prime}$. Denote $T_{k+1}=\Arrowvert\mathcal{R}(\theta_k,\xi_{k+1})\Arrowvert_2\Arrowvert\sigma(\theta_k)\xi_{k+1}\Arrowvert_2^3$, then $\left(T_{k+1},\mathcal{F}_{k+1}\right)_{k\geq 0}$ is a sequence of martingale differences and $\left|T_{k+1}\right|\leq c\left(\Arrowvert\xi_{k+1}\Arrowvert_2^5+\Arrowvert\xi_{k+1}\Arrowvert_2^3\right)$. Therefore, there exists some positive constant c such that
\begin{align*}
\E\left(T_{k+1}^2\exp\left\{c|T_{k+1}|^{2/5}\right\}\bigg|\mathcal{F}_k\right)<\infty.
\end{align*}
For all $y>c\eta^{1/2}$, one can write by Proposition \ref{martingale differences} with $\alpha=2/5$ that
\begin{align*}
I_{\eta,4,1}^{\prime\prime\prime}
=&\P\left(\sum_{k=0}^{m-1}\Arrowvert\mathcal{R}(\theta_k,\xi_{k+1})\Arrowvert_2\Arrowvert\sigma(\theta_k)\xi_{k+1}\Arrowvert_2^3\geq 2c_1y\eta^{-3}\right)
=\P\left(\sum_{k=0}^{m-1}T_{k+1}\geq 2c_1y\eta^{-3}\right)\\
\leq& c\exp\left\{-\frac{\left(y\eta^{-3}\right)^2}{c_1\left(\eta^{-2}+\left(y\eta^{-3}\right)^{8/5}\right)}\right\}\leq ce^{-c_2y^{2/5}\eta^{-6/5}}.
\end{align*}
Finally, combining the estimations of $I_{\eta,4,1}^{\prime}$, $I_{\eta,4,1}^{\prime\prime}$ and $I_{\eta,4,1}^{\prime\prime\prime}$, for $\mathbf{R}$ satisfying \eqref{R2} and $y>\max\left\{c\eta^{1/2},\eta^{1/2}\mathbf{R}^3\Arrowvert\nabla\sigma\Arrowvert_{\infty}^3\right\}$, we have
\begin{align*}
I_{\eta,4,1}\leq c_1e^{c_2\Arrowvert\sigma\Arrowvert_{\infty}^2\eta^{-1}\left(1+\Arrowvert\nabla\sigma\Arrowvert_{\infty}^2(\mathbf{R}^2+1)\right)}e^{-c_3y^{\frac{2}{3}}\eta^{-\frac{4}{3}}}+c_4e^{-\frac{\mathbf{R}^2}{2}}+ce^{-c_5y^{4/9}\eta^{-11/9}}+ce^{-c_6y^{2/5}\eta^{-6/5}}.
\end{align*}

On the other hand, we estimate $I_{\eta,4,2}$. Denote
$$
h_{k+1}=\int_0^1\nabla_{\sigma(\theta_k)\xi_{k+1}}\nabla_{\sigma(\theta_k)\xi_{k+1}}\nabla_{\sigma(\theta_k)\xi_{k+1}}f_h(\theta_k)\mathrm{d}t.
$$
Then $\left(h_{k+1},\mathcal{F}_{k+1}\right)_{k\geq 0}$ is a sequence of martingale differences. Moreover, by the boundedness of $\Arrowvert\nabla^3f_h\Arrowvert$, as established in \eqref{boundedness}, and together with \textbf{Assumption} \ref{conditions}, we have $|h_{k+1}|\leq c\Arrowvert\xi_{k+1}\Arrowvert_2^3$ where $c=\Arrowvert\nabla^3f_h(\theta_k)\Arrowvert\Arrowvert\sigma(\theta_k)\Arrowvert^3$. Therefore, there exists some positive constant $c$ such that $\E\left(|h_{k+1}|^2\exp\{c|h_{k+1}|^{2/3}\}|\mathcal{F}_k\right)<\infty$. Using Proposition \ref{martingale differences} with $\alpha=2/3$, we have for all $y>c\eta^{1/2}$,
\begin{align*}
    I_{\eta,4,2}
    &\leq \P\left(\left|\sum_{k=0}^{m-1}h_{k+1}\right|\geq cy\eta^{-2}\right)\leq c\exp\left\{-\frac{\left(y\eta^{-2}\right)^2}{c_1\left(\eta^{-2}+(y\eta^{-2})^{4/3}\right)}\right\}\\
    &\leq ce^{-c_2y^{2/3}\eta^{-4/3}}. 
\end{align*}

Hence, we get for all $y>\max\left\{c\eta^{1/2},\eta^{1/2}\mathbf{R}^3\Arrowvert\nabla\sigma\Arrowvert_{\infty}^3\right\}$ and $\mathbf{R}$ satisfying \eqref{R2},
\begin{align*}
I_4\leq c_1e^{c_2\Arrowvert\sigma\Arrowvert_{\infty}^2\eta^{-1}\left(1+\Arrowvert\nabla\sigma\Arrowvert_{\infty}^2(\mathbf{R}^2+1)\right)}e^{-c_3y^{\frac{2}{3}}\eta^{-\frac{4}{3}}}+c_4e^{-\frac{\mathbf{R}^2}{2}}+ce^{-c_5y^{4/9}\eta^{-11/9}}+ce^{-c_6y^{2/5}\eta^{-6/5}}.
\end{align*}

\textbf{(5) Estimation of $I_5$.} By \eqref{boundedness} and Proposition \ref{g}, for all $y>\max\left\{c\eta^{1/2},\eta^{1/2}\mathbf{R}^3\Arrowvert\nabla\sigma\Arrowvert_{\infty}^3\right\}$ and $\mathbf{R}$ satisfying \eqref{R2}, we have
\begin{align*}
&\P\left(\left|\mathcal{R}_{\eta,5}\right|\geq y/13\right)\\\leq&\P\left(\eta^{\frac{5}{2}}\sum_{k=0}^{m-1}\Arrowvert b(\theta_k)\Arrowvert_2^2\geq cy\right)+\P\left(\eta^{\frac{7}{2}}\sum_{k=0}^{m-1}\Arrowvert b(\theta_k)\Arrowvert_2^3\geq cy\right)\\
\leq&\P\left(\eta\sum_{k=0}^{m-1}\Arrowvert b(\theta_k)\Arrowvert_2^2\geq cy\eta^{-\frac{3}{2}}\right)+\P\left(\eta\sum_{k=0}^{m-1}\Arrowvert b(\theta_k)\Arrowvert_2^2\geq cy^{\frac{2}{3}}\eta^{-\frac{4}{3}}\right)\\
\leq &2\P\left(\eta\sum_{k=0}^{m-1}\Arrowvert b(\theta_k)\Arrowvert_2^2\geq cy^{\frac{2}{3}}\eta^{-\frac{4}{3}}\right)
\leq c_1e^{c_2\Arrowvert\sigma\Arrowvert_{\infty}^2\eta^{-1}\left(1+\Arrowvert\nabla\sigma\Arrowvert_{\infty}^2(\mathbf{R}^2+1)\right)}e^{-c_3y^{\frac{2}{3}}\eta^{-\frac{4}{3}}}+c_4e^{-\frac{\mathbf{R}^2}{2}}.
\end{align*}

\textbf{(6) Estimation of $I_6$.} By \eqref{boundedness}, we have for all $y>c\eta^{1/2}$,
\begin{align*}
I_6
\leq&\P\left(\eta^{\frac{5}{2}}\sum_{k=0}^{m-1}\left(\Arrowvert b(\theta_k)\Arrowvert_2\Arrowvert\sigma(\theta_k)\xi_{k+1}\Arrowvert_2^2+\sqrt{\eta}\Arrowvert b(\theta_k)\Arrowvert_2^2\Arrowvert\sigma(\theta_k)\xi_{k+1}\Arrowvert_2\right)\geq cy\right)\\
\leq &\P\left(\sum_{k=0}^{m-1}\Arrowvert b(\theta_k)\Arrowvert_2\Arrowvert\sigma(\theta_k)\xi_{k+1}\Arrowvert_2^2\geq cy\eta^{-\frac{5}{2}}\right)+\P\left(\sum_{k=0}^{m-1}\Arrowvert b(\theta_k)\Arrowvert_2^2\Arrowvert\sigma(\theta_k)\xi_{k+1}\Arrowvert_2\geq cy\eta^{-3}\right)\\
:=&I_{6,1}+I_{6,2}.
\end{align*}

For $I_{6,1}$, by the same argument as the estimation of $I_{\eta,4,1}^{\prime\prime}$, we have
\begin{align*}
I_{6,1}
&\leq\P\left(\eta\sum_{k=0}^{m-1}\Arrowvert b(\theta_k)\Arrowvert_2^2\geq Cy^{\frac{2}{3}}\eta^{-\frac{4}{3}}\right)+\P\left(\sum_{k=0}^{m-1}\Arrowvert\sigma(\theta_k)\xi_{k+1}\Arrowvert_2^4\geq cy^{4/3}\eta^{-8/3}\right).
\end{align*}
There exists some positive constant c such that
\begin{align*}
\E\left(\left|\Arrowvert\sigma(\theta_k)\xi_{k+1}\Arrowvert_2^4-\E\Arrowvert\sigma(\theta_k)\xi_{k+1}\Arrowvert_2^4\right|^2\exp\left\{c\left|\Arrowvert\sigma(\theta_k)\xi_{k+1}\Arrowvert_2^4-\E\Arrowvert\sigma(\theta_k)\xi_{k+1}\Arrowvert_2^4\right|^{1/2}\right\}\bigg|\mathcal{F}_k\right)<\infty.
\end{align*}
Using Proposition \ref{martingale differences} with $\alpha=1/2$, we have for all $y>c\eta^{1/2}$,
\begin{align*}
&\P\left(\sum_{k=0}^{m-1}\Arrowvert\sigma(\theta_k)\xi_{k+1}\Arrowvert_2^4\geq cy^{4/3}\eta^{-8/3}\right)\\
=&\P\left(\sum_{k=0}^{m-1}\left(\Arrowvert\sigma(\theta_k)\xi_{k+1}\Arrowvert_2^4-\E\Arrowvert\sigma(\theta_k)\xi_{k+1}\Arrowvert_2^4\right)\geq \big(cy^{4/3}\eta^{-8/3}-\sum_{k=0}^{m-1}\E\Arrowvert\sigma(\theta_k)\xi_{k+1}\Arrowvert_2^4\big)\right)\\
\leq& c\exp\left\{-\frac{\left(y^{4/3}\eta^{-8/3}\right)^2}{c_1\left(\eta^{-2}+\left(y^{4/3}\eta^{-8/3}\right)^{3/2}\right)}\right\}\leq ce^{-c_2y^{2/3}\eta^{-4/3}}.
\end{align*}
Hence, for all $y>\max\left\{c\eta^{1/2},\eta^{1/2}\mathbf{R}^3\Arrowvert\nabla\sigma\Arrowvert_{\infty}^3\right\}$ and $\mathbf{R}$ satisfying \eqref{R2}, we have
\begin{align*}
I_{6,1}=c_1e^{c_2\Arrowvert\sigma\Arrowvert_{\infty}^2\eta^{-1}\left(1+\Arrowvert\nabla\sigma\Arrowvert_{\infty}^2(\mathbf{R}^2+1)\right)}e^{-c_3y^{\frac{2}{3}}\eta^{-\frac{4}{3}}}+c_4e^{-\frac{\mathbf{R}^2}{2}}.
\end{align*}

Next, for $I_{6,2}$, one can write from Proposition \ref{g} that for $y>\max\left\{c\eta^{1/2},\eta^{1/2}\mathbf{R}^3\Arrowvert\nabla\sigma\Arrowvert_{\infty}^3\right\}$,
\begin{align*}
I_{6,2}
&=\P\left(\sum_{k=0}^{m-1}\Arrowvert b(\theta_k)\Arrowvert_2^2\Arrowvert\sigma(\theta_k)\xi_{k+1}\Arrowvert_2\geq cy\eta^{-3}\right)\\
&\leq\P\left(\sum_{k=0}^{m-1}\Arrowvert b(\theta_k)\Arrowvert_2^2Cy^{1/3}\eta^{-2/3}\geq cy\eta^{-3}\right)+\sum_{k=0}^{m-1}\P\left(\Arrowvert\sigma(\theta_k)\xi_{k+1}\Arrowvert_2\geq Cy^{1/3}\eta^{-2/3}\right)\\
&=\P\left(\eta\sum_{k=0}^{m-1}\Arrowvert b(\theta_k)\Arrowvert_2^2\geq cy^{2/3}\eta^{-4/3}\right)+\eta^{-2}\exp\left\{-\frac{y^{2/3}\eta^{-4/3}}{2}\right\}\\
&\leq c_1e^{c_2\Arrowvert\sigma\Arrowvert_{\infty}^2\eta^{-1}\left(1+\Arrowvert\nabla\sigma\Arrowvert_{\infty}^2(\mathbf{R}^2+1)\right)}e^{-c_3y^{\frac{2}{3}}\eta^{-\frac{4}{3}}}+c_4e^{-\frac{\mathbf{R}^2}{2}}.
\end{align*}

Hence, for all $y>\max\left\{c\eta^{1/2},\eta^{1/2}\mathbf{R}^3\Arrowvert\nabla\sigma\Arrowvert_{\infty}^3\right\}$ and $\mathbf{R}$ satisfying \eqref{R2}, we can conclude
\begin{align*}
I_6\leq c_1e^{c_2\Arrowvert\sigma\Arrowvert_{\infty}^2\eta^{-1}\left(1+\Arrowvert\nabla\sigma\Arrowvert_{\infty}^2(\mathbf{R}^2+1)\right)}e^{-c_3y^{\frac{2}{3}}\eta^{-\frac{4}{3}}}+c_4e^{-\frac{\mathbf{R}^2}{2}}.
\end{align*}

\textbf{(7) Estimation of $I_7$.} Denote $\zeta_{k+1}=\big<\nabla f_h(\theta_k),\mathcal{R}(\theta_k,\xi_{k+1})\big>$, then it is easy to see that $\left(\zeta_{k+1},\mathcal{F}_{k+1}\right)_{k\geq0}$ is a sequence of martingale differences and one can derive from \eqref{boundedness}
\begin{align*}
\lvert \zeta_{k+1}\rvert
\leq \Arrowvert\nabla f_h(\theta_k)\Arrowvert_2\cdot\Arrowvert\mathcal{R}(\theta_k,\xi_{k+1})\Arrowvert_2
\leq c\big(1+\Arrowvert\xi_{k+1}\Arrowvert_2^2\big),
\end{align*}
where $c=\Arrowvert\nabla f_h(\theta_k)\Arrowvert_2\Arrowvert\sigma(\theta_k)\Arrowvert\Arrowvert\nabla\sigma(\theta_k)\Arrowvert$. There exists some positive constant c such that
$$
\E\big(\lvert\zeta_{k+1}\rvert^2\exp\{c\lvert\zeta_{k+1}\rvert\}\big|\mathcal{F}_k\big)<\infty.
$$
Therefore, we have for all $y>c\eta^{1/2}$,
\begin{align*}
I_7
&=\P\big(\lvert\mathcal{R}_{\eta,7}\rvert\geq cy\big)=\P\Big(\Big\lvert\sum_{k=0}^{m-1}\zeta_{k+1}\Big\rvert\geq cy\eta^{-3/2}\Big)\\
&\leq c_1\exp\Big\{-\frac{(y\eta^{-3/2})^2}{c_2\big(\eta^{-2}+cy\eta^{-3/2}\big)}\Big\}\leq c_1\exp\Big\{-\frac{y^2\eta^{-1}}{c_2(1+\eta^{1/2}y)}\Big\}.
\end{align*}

\textbf{(8) Estimation of $I_8$.} By \eqref{boundedness} and following the same procedure as in the estimation of $I_{\eta,4,1}^{\prime\prime}$, we get for all $y>c\eta^{1/2}$,
\begin{align*}
\P\big(\lvert\mathcal{R}_{\eta,8}\rvert\geq cy\big)
&\leq\P\left(\sum_{k=0}^{m-1}\Arrowvert b(\theta_k)\Arrowvert_2\Arrowvert\mathcal{R}(\theta_k,\xi_{k+1})\Arrowvert_2\geq cy\eta^{-5/2}\right)\\
&\leq\P\left(\eta\sum_{k=0}^{m-1}\Arrowvert b(\theta_k)\Arrowvert_2^2\geq Cy^{2/3}\eta^{-4/3}\right)+\P\left(\sum_{k=0}^{m-1}\Arrowvert\mathcal{R}(\theta_k,\xi_{k+1})\Arrowvert_2^2\geq cy^{4/3}\eta^{-8/3}\right).
\end{align*}

Next, we give an estimation for the second term in the last inequality. Denote $T_{k+1}=\Arrowvert\mathcal{R}(\theta_k,\xi_{k+1})\Arrowvert_2^2-\E\Arrowvert\mathcal{R}(\theta_k,\xi_{k+1})\Arrowvert_2^2$, then it is easy to see that there exists some positive constant $c$ such that
$$
\E\big(T_{k+1}^2\exp{\big\{c\lvert T_{k+1}\rvert^{1/2}\big\}}\big\lvert\mathcal{F}_k\big)<\infty.
$$
Using Proposition \ref{martingale differences} with $\alpha=1/2$, we have for all $y>c\eta^{1/2}$,
\begin{align*}
&\P\Big(\sum_{k=0}^{m-1}\Arrowvert\mathcal{R}(\theta_k,\xi_{k+1})\Arrowvert_2^2\geq cy^{4/3}\eta^{-8/3}\Big)\\
=&\P\Big(\sum_{k=0}^{m-1}T_{k+1}\geq \big(cy^{4/3}\eta^{-8/3}-\sum_{k=0}^{m-1}\E\Arrowvert\mathcal{R}(\theta_k,\xi_{k+1})\Arrowvert_2^2\big)\Big)\\
\leq &c\exp{\left\{-\frac{(y^{4/3}\eta^{-8/3})^2}{c_1\left(\eta^{-2}+\left(y^{4/3}\eta^{-8/3}\right)^{3/2}\right)}\right\}}
\leq c\exp{\big\{-c_2y^{2/3}\eta^{-4/3}\big\}}.
\end{align*}

Hence, for all $y>\max\left\{c\eta^{1/2},\eta^{1/2}\mathbf{R}^3\Arrowvert\nabla\sigma\Arrowvert_{\infty}^3\right\}$ and $\mathbf{R}$ satisfying \eqref{R2}, one can derive
\begin{align*}
I_8&\leq c_1e^{c_2\Arrowvert\sigma\Arrowvert_{\infty}^2\eta^{-1}\left(1+\Arrowvert\nabla\sigma\Arrowvert_{\infty}^2(\mathbf{R}^2+1)\right)}e^{-c_3y^{\frac{2}{3}}\eta^{-\frac{4}{3}}}+c_4e^{-\frac{\mathbf{R}^2}{2}}.
\end{align*}

\textbf{(9) Estimation of $I_9$.} Denote
$$
h_{k+1}
=\big<\nabla^2 f_h(\theta_k),\sigma(\theta_k)\xi_{k+1}\big(\mathcal{R}(\theta_k,\xi_{k+1})\big)^{\rm T}\big>_{\rm{HS}}
+\big<\nabla^2 f_h(\theta_k),\mathcal{R}(\theta_k,\xi_{k+1})\big(\sigma(\theta_k)\xi_{k+1}\big)^{\rm T}\big>_{\rm{HS}},
$$
then $\big(h_{k+1},\mathcal{F}_{k+1}\big)_{k\geq0}$ is a sequence of martingale differences. Moreover, by \eqref{boundedness}, we further have
$$
\lvert h_{k+1}\rvert\leq c\big(\Arrowvert\xi_{k+1}\Arrowvert_2^3+\Arrowvert\xi_{k+1}\Arrowvert_2\big),
$$
where $c=\Arrowvert\nabla^2f_h(\theta_k)\Arrowvert\Arrowvert\sigma(\theta_k)\Arrowvert^2\Arrowvert\nabla\sigma(\theta_k)\Arrowvert$. Therefore, there exists some positive constant $c$ such that
$$
\E\big(h_{k+1}^2\exp{\big\{c\lvert h_{k+1}\rvert^{2/3}\big\}}\big\lvert\mathcal{F}_k\big)<\infty.
$$
Using Proposition \ref{martingale differences} with $\alpha=2/3$, we have for all $y\geq c\eta^{1/2}$,
\begin{align*}
I_9
&\leq\P\Big(\sum_{k=0}^{m-1}h_{k+1}\geq cy\eta^{-2}\Big)=\P\Big(\sum_{k=0}^{m-1}\big(h_{k+1}-\E h_{k+1}\big)\geq\big(cy\eta^{-2}-\sum_{k=0}^{m-1}\E h_{k+1}\big)\Big)\\
&\leq c\exp{\left\{-\frac{(y\eta^{-2})^2}{c(\eta^{-2}+(y\eta^{-2})^{4/3})}\right\}}
\leq c\exp{\big\{-c_1y^{2/3}\eta^{-4/3}\big\}}.
\end{align*}

\textbf{(10) Estimation of $I_{10}$.} For all $y>c\eta^{1/2}$ with $c$ large enough, one can derive from \eqref{boundedness} that
\begin{align*}
&\P\big(\lvert\mathcal{R}_{\eta,10}\rvert\geq y/13\big)\\
\leq&\P\left(\eta^{\frac{5}{2}}\sum_{k=0}^{m-1}\big<\nabla^2f_h(\theta_k),\mathcal{R}(\theta_k,\xi_{k+1})\big(\mathcal{R}(\theta_k,\xi_{k+1})\big)^{\rm T}\big>_{\rm{HS}}\geq \frac{4y}{13}\right)\\
&+\P\left(\eta^{\frac{7}{2}}\sum_{k=0}^{m-1}\int_{0}^{1}(1-t)^2\nabla_{\mathcal{R}(\theta_k,\xi_{k+1})}\nabla_{\mathcal{R}(\theta_k,\xi_{k+1})}\nabla_{\mathcal{R}(\theta_k,\xi_{k+1})}f_h\big(\theta_k+t\triangle\theta_k\big)\mathrm{d}t\geq \frac{8y}{13}\right)\\
\leq&\P\left(\sum_{k=0}^{m-1}\Arrowvert\mathcal{R}(\theta_k,\xi_{k+1})\Arrowvert_2^2\geq cy\eta^{-5/2}\right)+\P\left(\sum_{k=0}^{m-1}\Arrowvert\mathcal{R}(\theta_k,\xi_{k+1})\Arrowvert_2^3\geq cy\eta^{-7/2}\right)\\
:=&I_{\eta,10,1}+I_{\eta,10,2}.
\end{align*}

First, we estimate $I_{\eta,10,1}$. Denote $T_{k+1}=\Arrowvert\mathcal{R}(\theta_k,\xi_{k+1})\Arrowvert_2^2-\E\Arrowvert\mathcal{R}(\theta_k,\xi_{k+1})\Arrowvert_2^2$. Clearly, there exists some positive constant $c$ such that
$$
\E\big(T_{k+1}^2\exp{\big\{c\lvert T_{k+1}\rvert^{1/2}\big\}}\big\lvert\mathcal{F}_k\big)<\infty.
$$
Using Proposition \ref{martingale differences} with $\alpha=1/2$, we have for all $y>c\eta^{1/2}$,
\begin{align*}
I_{\eta,10,1}
&=\P\left(\sum_{k=0}^{m-1}\Arrowvert\mathcal{R}(\theta_k,\xi_{k+1})\Arrowvert_2^2\geq cy\eta^{-5/2}\right)\\
&=\P\left(\sum_{k=0}^{m-1}T_{k+1}\geq\big(cy\eta^{-5/2}-\sum_{k=0}^{m-1}\E\Arrowvert\mathcal{R}(\theta_k,\xi_{k+1})\Arrowvert_2^2\big)\right)\\
&\leq c\exp{\left\{-\frac{(y\eta^{-5/2})^2}{c\big(\eta^{-2}+(y\eta^{-5/2})^{3/2}\big)}\right\}}\leq c\exp{\big\{-c_1y^{1/2}\eta^{-5/4}\big\}}.
\end{align*}

Second, we estimate $I_{\eta,10,2}$. Denote $T_{k+1}=\Arrowvert\mathcal{R}(\theta_k,\xi_{k+1})\Arrowvert_2^3-\E\Arrowvert\mathcal{R}(\theta_k,\xi_{k+1})\Arrowvert_2^3$. By the same procedure as above with $\alpha=1/3$, we have
\begin{align*}
I_{\eta,10,2}
&=\P\left(\sum_{k=0}^{m-1}\Arrowvert\mathcal{R}(\theta_k,\xi_{k+1})\Arrowvert_2^3\geq cy\eta^{-7/2}\right)
\\&\leq c\exp{\left\{-\frac{(y\eta^{-7/2})^2}{c\big(\eta^{-2}+(y\eta^{-7/2})^{5/3}\big)}\right\}}\leq c\exp{\big\{-c_1y^{1/3}\eta^{-7/6}\big\}}.
\end{align*}

Finally, combining the estimations of $I_{\eta,10,1}$ and $I_{\eta,10,2}$, one can conclude that for all $y> c\eta^{1/2}$ with $c$ large enough,
$$
I_{10}\leq I_{\eta,10,1}+I_{\eta,10,2}\leq c\exp{\big\{-c_1y^{1/3}\eta^{-7/6}\big\}}.
$$

\textbf{(11) Estimation of $I_{11}$.} By \eqref{boundedness} and the Cauchy-Schwarz inequality, we have
\begin{align*}
I_{11}
\leq&\P\Big(\sum_{k=0}^{m-1}\Arrowvert b(\theta_k)\Arrowvert_2^2\Arrowvert\mathcal{R}(\theta_k,\xi_{k+1})\Arrowvert_2\geq cy\eta^{-7/2}\Big)+\P\Big(\sum_{k=0}^{m-1}\Arrowvert b(\theta_k)\Arrowvert_2\Arrowvert\mathcal{R}(\theta_k,\xi_{k+1})\Arrowvert_2^2\geq cy\eta^{-7/2}\Big)\\:=&I_{\eta,11,1}+I_{\eta,11,2}.
\end{align*}

First, we estimate $I_{\eta,11,1}$. Following the same procedure as in the estimation of $I_{\eta,4,1}^{\prime\prime}$, for all $y>\max\left\{c\eta^{1/2},\eta^{1/2}\mathbf{R}^3\Arrowvert\nabla\sigma\Arrowvert_{\infty}^3\right\}$ and $\mathbf{R}$ satisfying \eqref{R2}, we have
\begin{align*}
I_{\eta,11,1}
=&\P\left(\sum_{k=0}^{m-1}\Arrowvert b(\theta_k)\Arrowvert_2^2\Arrowvert\mathcal{R}(\theta_k,\xi_{k+1})\Arrowvert_2\geq cy\eta^{-7/2}\right)\\
\leq&\P\left(\eta\sum_{k=0}^{m-1}\Arrowvert b(\theta_k)\Arrowvert_2^2\geq Cy^{2/3}\eta^{-4/3}\right)+
\P\left(\sum_{k=0}^{m-1}\Arrowvert\mathcal{R}(\theta_k,\xi_{k+1})\Arrowvert_2^2\geq cy^{2/3}\eta^{-7/3}\right)\\
\leq&\P\left(\eta\sum_{k=0}^{m-1}\Arrowvert b(\theta_k)\Arrowvert_2^2\geq Cy^{2/3}\eta^{-4/3}\right)+c\exp{\left\{-\frac{(y^{2/3}\eta^{-7/3})^2}{c\big(\eta^{-2}+(y^{2/3}\eta^{-7/3})^{3/2}\big)}\right\}}\\
&\leq c_1e^{c_2\Arrowvert\sigma\Arrowvert_{\infty}^2\eta^{-1}\left(1+\Arrowvert\nabla\sigma\Arrowvert_{\infty}^2(\mathbf{R}^2+1)\right)}e^{-c_3y^{\frac{2}{3}}\eta^{-\frac{4}{3}}}+c_4e^{-\frac{\mathbf{R}^2}{2}}+ce^{-c_5y^{\frac{1}{3}}\eta^{-\frac{7}{6}}}.
\end{align*}

Similarly, we get for all $y> c\eta^{1/2}$ with $c$ large enough,
\begin{align*}
I_{\eta,11,2}
=&\P\left(\sum_{k=0}^{m-1}\Arrowvert b(\theta_k)\Arrowvert_2\Arrowvert\mathcal{R}(\theta_k,\xi_{k+1})\Arrowvert_2^2\geq cy\eta^{-7/2}\right)\\
\leq&\P\Big(\eta\sum_{k=0}^{m-1}\Arrowvert b(\theta_k)\Arrowvert_2^2\geq Cy^{2/3}\eta^{-4/3}\Big)+\P\Big(\sum_{k=0}^{m-1}\Arrowvert\mathcal{R}(\theta_k,\xi_{k+1})\Arrowvert_2^4\geq cy^{4/3}\eta^{-14/3}\Big).
\end{align*}
Furthermore, denote $T_{k+1}=\Arrowvert\mathcal{R}(\theta_k,\xi_{k+1})\Arrowvert_2^4-\E\Arrowvert\mathcal{R}(\theta_k,\xi_{k+1})\Arrowvert_2^4$ and then using Proposition \ref{martingale differences} with $\alpha=1/4$, we have
\begin{align*}
I_{\eta,11,2}
&\leq \P\Big(\eta\sum_{k=0}^{m-1}\Arrowvert b(\theta_k)\Arrowvert_2^2\geq Cy^{2/3}\eta^{-4/3}\Big)+c\exp\left\{-\frac{(y^{4/3}\eta^{-14/3})^2}{c_1\big(\eta^{-2}+(y^{4/3}\eta^{-14/3})^{7/4}\big)}\right\}\\
&\leq\P\Big(\eta\sum_{k=0}^{m-1}\Arrowvert b(\theta_k)\Arrowvert_2^2\geq Cy^{2/3}\eta^{-4/3}\Big)+c\exp\Big\{-y^{1/3}\eta^{-7/6}\Big\}.
\end{align*}

Hence, we get for all $y>\max\left\{c\eta^{1/2},\eta^{1/2}\mathbf{R}^3\Arrowvert\nabla\sigma\Arrowvert_{\infty}^3\right\}$ and $\mathbf{R}$ satisfying \eqref{R2},
$$
I_{11}\leq c_1e^{c_2\Arrowvert\sigma\Arrowvert_{\infty}^2\eta^{-1}\left(1+\Arrowvert\nabla\sigma\Arrowvert_{\infty}^2(\mathbf{R}^2+1)\right)}e^{-c_3y^{\frac{2}{3}}\eta^{-\frac{4}{3}}}+c_4e^{-\frac{\mathbf{R}^2}{2}}+ce^{-c_5y^{\frac{1}{3}}\eta^{-\frac{7}{6}}}.
$$

\textbf{(12) Estimation of $I_{12}$.} By \eqref{boundedness} and H\"older's inequality, we have for all $y>c\eta^{1/2}$ with $c$ large enough,
\begin{align*}
I_{12}
&\leq\P\left(\sum_{k=0}^{m-1}\Arrowvert b(\theta_k)\Arrowvert_2\Arrowvert\sigma(\theta_k)\xi_{k+1}\Arrowvert_2\Arrowvert\mathcal{R}(\theta_k,\xi_{k+1})\Arrowvert_2\geq cy\eta^{-3}\right)\\
&\leq\P\left(\Big(\sum_{k=0}^{m-1}\Arrowvert b(\theta_k)\Arrowvert_2^2\Big)^{1/2}\Big(\sum_{k=0}^{m-1}\Arrowvert\sigma(\theta_k)\xi_{k+1}\Arrowvert_2^2\Arrowvert\mathcal{R}(\theta_k,\xi_{k+1})\Arrowvert_2^2\Big)^{1/2}\geq cy\eta^{-3}\right)\\
&\leq\P\left(\eta\sum_{k=0}^{m-1}\Arrowvert b(\theta_k)\Arrowvert_2^2\geq Cy^{2/3}\eta^{-4/3}\right)+\P\left(\sum_{k=0}^{m-1}\Arrowvert\sigma(\theta_k)\xi_{k+1}\Arrowvert_2^2\Arrowvert\mathcal{R}(\theta_k,\xi_{k+1})\Arrowvert_2^2\geq Cy^{4/3}\eta^{-11/3}\right).
\end{align*}

We now estimate the second term in the expression above. Denote
$$
T_{k+1}=\Arrowvert\sigma(\theta_k)\xi_{k+1}\Arrowvert_2^4-\E\Arrowvert\sigma(\theta_k)\xi_{k+1}\Arrowvert_2^4\quad \text{and} \quad
H_{k+1}=\Arrowvert\mathcal{R}(\theta_k,\xi_{k+1})\Arrowvert_2^4-\E\Arrowvert\mathcal{R}(\theta_k,\xi_{k+1})\Arrowvert_2^4,
$$
using Proposition \ref{martingale differences} with $\alpha_1=1/2$ and $\alpha_2=1/4$ respectively, we obtain
\begin{align*}
&\P\left(\sum_{k=0}^{m-1}\Arrowvert\sigma(\theta_k)\xi_{k+1}\Arrowvert_2^2\Arrowvert\mathcal{R}(\theta_k,\xi_{k+1})\Arrowvert_2^2\geq Cy^{4/3}\eta^{-11/3}\right)\\
\leq&\P\left(\Big(\sum_{k=0}^{m-1}\Arrowvert\sigma(\theta_k)\xi_{k+1}\Arrowvert_2^4\Big)^{1/2}\Big(\sum_{k=0}^{m-1}\Arrowvert\mathcal{R}(\theta_k,\xi_{k+1})\Arrowvert_2^4\Big)^{1/2}\geq Cy^{4/3}\eta^{-11/3},\right.\\&\left.\qquad\qquad\qquad\qquad\qquad\qquad\qquad\qquad\qquad\qquad\sum_{k=0}^{m-1}\Arrowvert\sigma(\theta_k)\xi_{k+1}\Arrowvert_2^4\geq cy^{4/3}\eta^{-8/3}\right)\\
&+\P\left(\Big(\sum_{k=0}^{m-1}\Arrowvert\sigma(\theta_k)\xi_{k+1}\Arrowvert_2^4\Big)^{1/2}\Big(\sum_{k=0}^{m-1}\Arrowvert\mathcal{R}(\theta_k,\xi_{k+1})\Arrowvert_2^4\Big)^{1/2}\geq Cy^{4/3}\eta^{-11/3},\right.\\&\left.\qquad\qquad\qquad\qquad\qquad\qquad\qquad\qquad\qquad\qquad\sum_{k=0}^{m-1}\Arrowvert\sigma(\theta_k)\xi_{k+1}\Arrowvert_2^4<cy^{4/3}\eta^{-8/3}\right)\\
\leq&\P\left(\sum_{k=0}^{m-1}\Arrowvert\sigma(\theta_k)\xi_{k+1}\Arrowvert_2^4\geq cy^{4/3}\eta^{-8/3}\right)+\P\left(\sum_{k=0}^{m-1}\Arrowvert\mathcal{R}(\theta_k,\xi_{k+1})\Arrowvert_2^4\geq cy^{4/3}\eta^{-14/3}\right)\\
\leq& c\exp\Big\{-\frac{(y^{4/3}\eta^{-8/3})^2}{c_1\big(\eta^{-2}+(y^{4/3}\eta^{-8/3})^{3/2}\big)}\Big\}+
c\exp\Big\{-\frac{(y^{4/3}\eta^{-14/3})^2}{c_1\big(\eta^{-2}+(y^{4/3}\eta^{-14/3})^{7/4}\big)}\Big\}\\
\leq& c\exp\big\{-cy^{2/3}\eta^{-4/3}\big\}+c\exp\big\{-cy^{1/3}\eta^{-7/6}\big\}.
\end{align*}

Hence, for all $y>\max\left\{c\eta^{1/2},\eta^{1/2}\mathbf{R}^3\Arrowvert\nabla\sigma\Arrowvert_{\infty}^3\right\}$ and $\mathbf{R}$ satisfying \eqref{R2}, one can obtain
\begin{align*}
I_{12}
&\leq \P\Big(\eta\sum_{k=0}^{m-1}\Arrowvert b(\theta_k)\Arrowvert_2^2\geq Cy^{2/3}\eta^{-4/3}\Big)+c\exp\big\{-cy^{2/3}\eta^{-4/3}\big\}+c\exp\big\{-cy^{1/3}\eta^{-7/6}\big\}
\\&\leq c_1e^{c_2\Arrowvert\sigma\Arrowvert_{\infty}^2\eta^{-1}\left(1+\Arrowvert\nabla\sigma\Arrowvert_{\infty}^2(\mathbf{R}^2+1)\right)}e^{-c_3y^{\frac{2}{3}}\eta^{-\frac{4}{3}}}+c_4e^{-\frac{\mathbf{R}^2}{2}}+ce^{-c_5y^{\frac{1}{3}}\eta^{-\frac{7}{6}}}.
\end{align*}

\textbf{(13) Estimation of $I_{13}$.} By \eqref{boundedness}, it is easy to see that for all $y>c\eta^{1/2}$,
\begin{align*}
I_{13}
=&\P\Big(\lvert\mathcal{R}_{\eta,13}\rvert\geq cy\Big)\\
\leq&\P\left(\sum_{k=0}^{m-1}\Arrowvert\sigma(\theta_k)\xi_{k+1}\Arrowvert_2^2\Arrowvert\mathcal{R}(\theta_k,\xi_{k+1})\Arrowvert_2\geq cy\eta^{-5/2}\right)\\&
+\P\left(\sum_{k=0}^{m-1}\Arrowvert\sigma(\theta_k)\xi_{k+1}\Arrowvert_2\Arrowvert\mathcal{R}(\theta_k,\xi_{k+1})\Arrowvert_2^2\geq cy\eta^{-3}\right)\\
:=&I_{\eta,13,1}+I_{\eta,13,2},
\end{align*}

For the first term, one can derive from the H\"older's inequality that
\begin{align*}
&I_{\eta,13,1}\\
\leq&\P\left(\Big(\sum_{k=0}^{m-1}\Arrowvert\sigma(\theta_k)\xi_{k+1}\Arrowvert_2^4\Big)^{1/2}\Big(\sum_{k=0}^{m-1}\Arrowvert\mathcal{R}(\theta_k,\xi_{k+1})\Arrowvert_2^2\Big)^{1/2}\geq cy\eta^{-5/2},\sum_{k=0}^{m-1}\Arrowvert\sigma(\theta_k)\xi_{k+1}\Arrowvert_2^4\geq Cy\eta^{-5/2}\right)\\
&+\P\left(\Big(\sum_{k=0}^{m-1}\Arrowvert\sigma(\theta_k)\xi_{k+1}\Arrowvert_2^4\Big)^{1/2}\Big(\sum_{k=0}^{m-1}\Arrowvert\mathcal{R}(\theta_k,\xi_{k+1})\Arrowvert_2^2\Big)^{1/2}\geq cy\eta^{-5/2},\sum_{k=0}^{m-1}\Arrowvert\sigma(\theta_k)\xi_{k+1}\Arrowvert_2^4<Cy\eta^{-5/2}\right)\\
\leq&\P\left(\sum_{k=0}^{m-1}\Arrowvert\sigma(\theta_k)\xi_{k+1}\Arrowvert_2^4\geq Cy\eta^{-5/2}\right)+\P\left(\sum_{k=0}^{m-1}\Arrowvert\mathcal{R}(\theta_k,\xi_{k+1})\Arrowvert_2^2\geq Cy\eta^{-5/2}\right).
\end{align*}
Denote
$$
T_{k+1}=\Arrowvert\sigma(\theta_k)\xi_{k+1}\Arrowvert_2^4-\E\Arrowvert\sigma(\theta_k)\xi_{k+1}\Arrowvert_2^4\quad \text{and} \quad
H_{k+1}=\Arrowvert\mathcal{R}(\theta_k,\xi_{k+1})\Arrowvert_2^2-\E\Arrowvert\mathcal{R}(\theta_k,\xi_{k+1})\Arrowvert_2^2,
$$
then using Proposition \ref{martingale differences} with $\alpha=1/2$, we have
\begin{align*}
I_{\eta,13,1}
&\leq 2c\exp\Big\{-\frac{(y\eta^{-5/2})^2}{c_1\big(\eta^{-2}+(y\eta^{-5/2})^{3/2}\big)}\Big\}
\leq 2c\exp\big\{-c_2y^{1/2}\eta^{-5/4}\big\}.
\end{align*}

For the second term, we have
\begin{align*}
I_{\eta,13,2}
\leq&\P\left(\sum_{k=0}^{m-1}\Arrowvert\sigma(\theta_k)\xi_{k+1}\Arrowvert_2\Arrowvert\mathcal{R}(\theta_k,\xi_{k+1})\Arrowvert_2^2\geq cy\eta^{-3},\Arrowvert\sigma(\theta_k)\xi_{k+1}\Arrowvert_2\geq Cy^{1/5}\eta^{-3/5}\right)\\
&+\P\left(\sum_{k=0}^{m-1}\Arrowvert\sigma(\theta_k)\xi_{k+1}\Arrowvert_2\Arrowvert\mathcal{R}(\theta_k,\xi_{k+1})\Arrowvert_2^2\geq cy\eta^{-3},\Arrowvert\sigma(\theta_k)\xi_{k+1}\Arrowvert_2<Cy^{1/5}\eta^{-3/5}\right)\\
\leq&\sum_{k=0}^{m-1}\P\Big(\Arrowvert\sigma(\theta_k)\xi_{k+1}\Arrowvert_2\geq Cy^{1/5}\eta^{-3/5}\Big)+\P\left(\sum_{k=0}^{m-1}\Arrowvert\mathcal{R}(\theta_k,\xi_{k+1})\Arrowvert_2^2\geq cy^{4/5}\eta^{-12/5}\right)\\
\leq&\eta^{-2}\exp\big\{-c_1y^{2/5}\eta^{-6/5}\big\}+c\exp\Big\{-\frac{(y^{4/5}\eta^{-12/5})^2}{c_2\big(\eta^{-2}+\left(y^{4/5}\eta^{-12/5}\right)^{3/2}\big)}\Big\}\\
\leq&\eta^{-2}\exp\big\{-c_1y^{2/5}\eta^{-6/5}\big\}+c\exp\Big\{-y^{2/5}\eta^{-6/5}\Big\},
\end{align*}
where the third inequality follows from Proposition \ref{martingale differences} with $\alpha=1/2$. Thus we obtain that for all $y>c\eta^{1/2}$,
\begin{align*}
I_{13}\leq I_{\eta,13,1}+I_{\eta,13,2}\leq c\exp\big\{-y^{2/5}\eta^{-6/5}\big\}.
\end{align*}

Therefore, by the estimations of $I_i,1\leq i\leq 13$, for $\max\left\{c\eta^{1/2},\eta^{1/2}\mathbf{R}^3\Arrowvert\nabla\sigma\Arrowvert_{\infty}^3\right\}<y=o(\eta^{-1/2})$ and $\mathbf{R}$ satisfying \eqref{R2}, we have
\begin{align}\nonumber
\P\Big(\big\lvert\mathcal{R}_\eta\big\rvert\geq y\Big)\leq c_1\left(\exp\left\{-\frac{y^2\eta^{-1}}{c_2(1+\eta^{1/2}y)}\right\}+\exp\left\{-\frac{\mathbf{R}^2}{2}\right\}+\exp\left\{-c_3y^{1/3}\eta^{-7/6}\right\}\right).
\end{align}
Proposition \ref{remainder} is therefore proved.

\qed
\end{appendix}

\section*{Acknowledgements}

Peng Chen is supported by the National Natural
Science Foundation of China (No. 12301176), the China Postdoctoral Science Foundation (No. 2023M741692) and the Natural Science Foundation of Jiangsu Province Grant (No. BK20220867). Hui Jiang is supported by the National Natural Science Foundation of China (No. 12471146) and the Natural Science
Foundation of Jiangsu Province of China (No. BK20231435).

\end{document}